\documentclass[graybox]{svmult}

\usepackage{amsfonts,amsmath,amstext,latexsym,amssymb,physics}
\usepackage{mathptmx}       
\usepackage{helvet}         
\usepackage{courier}        
\usepackage{type1cm}        

\usepackage{newtxtext}       %
\usepackage{newtxmath}       
%




\usepackage[T1]{fontenc}

\usepackage{graphicx}        
\usepackage{multicol}        
\usepackage[bottom]{footmisc}

\usepackage{cite}
\usepackage{bm}



\usepackage{enumitem}  
\usepackage{calc}
\setlist{labelindent=1pt,itemsep=0.1cm}
\setlist[itemize]{leftmargin=0.7cm}
\setlist[enumerate]{itemindent=0em,leftmargin=0.7cm}

\usepackage{float} 


\usepackage[bookmarks=false,hidelinks,unicode]{hyperref}



%


\DeclareMathOperator{\esssup}{ess\;sup}
\newcommand*{\QEDB}{\hfill\ensuremath{\square}}%

\smartqed

\allowdisplaybreaks




\begin{document}
\title*{Convergence of commutator of linear integral operators with separable kernel representing monomial covariance type commutation relations in $L_p$}
\titlerunning{Convergence of commutator of linear integral operators with separable }        

\author{Domingos Djinja \and Sergei Silvestrov \and Alex Behakanira Tumwesigye}
\authorrunning{D. Djinja, S. Silvestrov, A. B. Tumwesigye} 

\institute{Domingos Djinja \at
Department of Mathematics and Informatics, Faculty of Sciences, Eduardo Mondlane University, Box 257, Maputo, Mozambique \\
\email{domingos.djindja@uem.ac.mz}
\at
Division of Mathematics and Physics, School of Education, Culture and Communication, M{\"a}lardalen University, Box 883, 72123 V{\"a}ster{\aa}s, Sweden \\
\email{domingos.celso.djinja@mdu.se}
\and
Sergei Silvestrov (corresponding author)
\at Division of Mathematics and Physics, School of Education, Culture and Commu\-nication, M{\"a}lardalen University, Box 883, 72123 V{\"a}ster{\aa}s, Sweden. \\
\email{sergei.silvestrov@mdu.se}
\and
Alex Behakanira Tumwesigye \at
Department of Mathematics, College of Natural Sciences, Makerere University, Box 7062, Kampala, Uganda. \\
\email{alex.tumwesigye@mak.ac.ug}
}



\maketitle

\abstract*{
Representations by linear integral operators on $L_p$ spaces over measure spaces are investigated for the polynomial covariance type commutation relations and more general two-sided generalizations of covariance commutation relations extending simultaneously the covariance and the
reciprocal covariance type commutation relations. Necessary and sufficient conditions for the defining kernels
of the integral operators are obtained for the operators to satisfy such commutation relations associated to pairs of polynomials.
Representations by integral operators are studied both for general polynomial covariance type commutation relations
and for important classes of polynomial covariance commutation relations associated with arbitrary monomials and affine functions.
The convergence of commutators of operators in the sequences of non-commuting operators satisfying
the corresponding monomial covariance commutation relation is investigated, and the sequences of such non-commuting operators
converging to commuting operators, are presented.
\keywords{integral operators, covariance commutation relations, commutator}\\
{\bf MSC 2020:} 47A62, 47G10, 47L80, 47L65}

\abstract{
Representations by linear integral operators on $L_p$ spaces over measure spaces are investigated for the polynomial covariance type commutation relations and more general two-sided generalizations of covariance commutation relations extending simultaneously the covariance and the
reciprocal covariance type commutation relations. Necessary and sufficient conditions for the defining kernels
of the integral operators are obtained for the operators to satisfy such commutation relations associated to pairs of polynomials.
Representations by integral operators are studied both for general polynomial covariance type commutation relations
and for important classes of polynomial covariance commutation relations associated with arbitrary monomials and affine functions.
The convergence of commutators of operators in the sequences of non-commuting operators satisfying
the corresponding monomial covariance commutation relation is investigated, and the sequences of such non-commuting operators
converging to commuting operators, are presented.
\keywords{integral operators, covariance commutation relations, commutator}\\
{\bf MSC 2020:} 47A62, 47G10, 47L80, 47L65}

\section{Introduction}
Commutation relations \index{commutation relation} of the form
\begin{equation} \label{Covrelation}
  AB=B F(A)
\end{equation}
where $A, B$ are elements of an associative algebra and  $F$ is function of the elements of the algebra, are important in many areas of Mathematics and applications. Such commutation relations are usually called covariance relations, crossed product relations or semi-direct product relations. A pair of elements of an algebra that satisfy \eqref{Covrelation} is called a representation of this relation \cite{Samoilenkobook}.

Representations of covariance commutation relations \eqref{Covrelation} by linear operators are important for study of actions and induced representations of groups and semigroups, crossed product operator algebras, dynamical systems, harmonic analysis, wavelets and fractal analysis and, hence have applications in physics and engineering
\cite{BratJorgIFSAMSmemo99,BratJorgbook,JorgWavSignFracbook,JorgOpRepTh88,JorMoore84,MACbook1,MACbook2,MACbook3,OstSambook,Pedbook79,Samoilenkobook}.

A description of the structure of representations for the relation \eqref{Covrelation} and more general families of self-adjoint operators satisfying such relations by bounded and unbounded self-adjoint linear operators on a Hilbert space use reordering formulas for functions of the algebra elements and operators satisfying covariance commutation relation, functional calculus and spectral representation of operators and interplay with dynamical systems generated by iteration of involved in the commutation relations maps is presented in \cite{BratEvansJorg2000,CarlsenSilvExpoMath07,CarlsenSilvAAM09,CarlsenSilvProcEAS10,DutkayJorg3,DJS12JFASilv,DLS09,DutSilvProcAMS,DutSilvSV,JSvT12a,JSvT12b,Mansour16,JMusondaPhdth18,JMusonda19,Musonda20,Nazaikinskii96,OstSambook,PerssonSilvestrov031,PerssonSilvestrov032,PersSilv:CommutRelDinSyst,RST16,RSST16,Samoilenkobook,SaV8894,SilPhD95,STomdynsystype1,SilWallin96,SvSJ07a,SvSJ07b,SvSJ07c,SvT09,Tomiyama87,Tomiama:SeoulLN1992,Tomiama:SeoulLN2part2000,AlexThesis2018,TumwRiSilv:ComCrPrAlgPieccnstfnctreallineSPAS19v2,VaislebSa90}.

Constructions of representations of polynomial covariance commutations relations by pairs of  linear integral operators with general kernels, linear integral operators with separable kernels and linear multiplication operator  have been considered in \cite{DjinjaEtAll_IntOpOverMeasureSpaces,DjinjaEtAll_LinItOpInGenSepKern,DjinjaEtAll_LinMultIntOp}.

In this paper, we construct representations of the covariance type commutation relations \eqref{Covrelation} and more general relation $H(A)B=BF(A)$, where $F,H$ are polynomials, by pairs of linear integral operators with separable kernels on Banach spaces $L_p$ over measure spaces. These results generalize results in \cite{DjinjaEtAll_LinItOpInGenSepKern} and gives also results for the reci\-procal commutation relation $BA=F(A)B$ of \eqref{Covrelation}.
We consider representations of commutation relations $H(A)B=BF(A)$ by linear integral operators defined by kernels satisfying different conditions and derive conditions on such kernel functions so that the corresponding operators satisfy $H(A)B=BF(A)$ for polynomials $F, H$.
Representation by integral operators are studied both for general polynomial covariance commutation relations and for important classes of polynomial covariance commutation relations associated to arbitrary monomials and to affine functions.
Examples of integral operators with separable kernels on $L_p$ spaces representing the relation $AB=\delta BA^2$, $\delta\in\mathbb{R}\setminus\{0\}$  are constructed in \cite{DjinjaEtAll_LinItOpInGenSepKern}. Furthermore, in this paper, convergence of commutators of  those sequences of non-commuting operators which satisfy the corresponding monomial covariance commutation relation is investigated.

This paper is organized in four sections. After the introduction,  we present in Section \ref{SecPreNot} some preliminaries, notations, basic definitions and a useful lemma.
In Section \ref{SecRepreBothLI}, we present representations when both $A$ and $B$ are linear integral ope\-rators with separable kernels acting on the Banach spaces $L_p$. In Section  \ref{SecRepreBothLISeqOfNonCommutOpConvCommuting} we present
sequences of non-commuting operators which satisfy monomial covariance commutation relation and converge to commuting operators.

\section{Preliminaries and notations}\label{SecPreNot}
In this section we present preliminaries, basic definitions and notations for this article
\cite{AdamsG,AkkLnearOperators,BrezisFASobolevSpaces,FollandRA,Kantarovitch,Kolmogorov,KolmogorovVol2,RudinRCA}.

Let $\mathbb{R}$ be the set of all real numbers, $\mathbb{Z}$ the set of all integers and $ X$ a non-empty set.
Let $(X,\Sigma, \mu)$ be a $\sigma$-finite measure space, that is, $\Sigma$ is a $\sigma-$algebra of measurable subsets of $X$,
and $X$ can be covered with at most countable many disjoint sets $E_1,E_2,E_3,\ldots$ such that $ E_i\in \Sigma, \,
\mu(E_i)<\infty$, $i=1,2,\ldots$  and $\mu$ is a measure.
For $1\leq p<\infty,$ we denote by $L_p(X,\mu)$, the set of all classes of equivalent (different on a set of zero measure)
measurable functions $f:X\to \mathbb{R}$ such that
$\int\limits_{X} |f(t)|^p d\mu < \infty.$
This is a Banach space (Hilbert space when $p=2$) with norm
$\| f\|_p= \big( \int\limits_{X} |f(t)|^p d\mu \big)^{\frac{1}{p}}.$
We denote by $L_\infty(X,\mu)$ the set of all classes of equivalent measurable functions $f:X\to \mathbb{R}$ such that exists $\lambda >0$,
$|f(t)|\le \lambda$
almost everywhere. This is a Banach space with norm
$\displaystyle\|f\|_{\infty}=\mathop{\esssup}_{t\in X} |f(t)|.$
The support of a function $f:\, X\to\mathbb{R}$ is  ${\rm supp }\, f = \{t\in X \colon \, f(t)\not=0\}.$

Let $(\mathbb{R},\Sigma,\mu)$ be the standard Lebesgue measure space. We will use the notation
\begin{equation}\label{QGpairingDefinition}
  Q_{\Lambda}(u,v)=\int\limits_{\Lambda} u(t)v(t)d\mu
\end{equation}
where $\Lambda\in \Sigma$, and $u,v:\, \Lambda\to \mathbb{R} $ are such functions that the integral exists and is finite.

The following useful lemma for integral operators proved in \cite{DjinjaEtAll_LinItOpInGenSepKern} will be used throughout the article.
\begin{lemma}[\hspace{-0.1mm}\cite{DjinjaEtAll_LinItOpInGenSepKern}]\label{LemmaAllowInfSetsEqLp}
Let $(X,\Sigma,\mu)$ be a $\sigma$-finite measure space. Let $f, g\in L_q(X,\mu)$ for $1 \leq q \leq \infty$ and let $G_1,G_2\in \Sigma$,  $i=1,2$.
Let $G=G_1\cap G_2$. Then the following statements are equivalent\textup{:}
\begin{enumerate}[label=\textup{\arabic*.}, ref=\arabic*]
  \item \label{LemmaAllowInfSetsEqLp:cond1} For all $x\in L_p(X,\mu)$, $1\leq p \leq \infty$ such that
  $ \frac{1}{p}+\frac{1}{q}=1$,
\begin{equation*}
 Q_{G_1}(f,x)= \int\limits_{G_1} f(t)x(t)d\mu=\int\limits_{G_2} g(t)x(t)d\mu=Q_{G_2}(g,x).
\end{equation*}
\item \label{LemmaAllowInfSetsEqLp:cond3} The following conditions hold\textup{:}
    \begin{enumerate}[label=\textup{\alph*)}]
      \item for almost every $t\in G$, $f(t)=g(t)$,
      \item for almost every $t \in G_1\setminus G,\ f(t)=0,$
      \item  for almost every $t \in G_2\setminus G,\ g(t)=0.$
    \end{enumerate}
\end{enumerate}
\end{lemma}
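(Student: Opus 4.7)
}

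The plan is to recast the two integrals as a single integral over $X$ via extension by zero, and then deduce pointwise (a.e.) equality from annihilation against a sufficiently rich class of test functions.

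First I would prove the easier direction $(\ref{LemmaAllowInfSetsEqLp:cond3})\Rightarrow(\ref{LemmaAllowInfSetsEqLp:cond1})$. I would split $G_1=G\cup(G_1\setminus G)$ and $G_2=G\cup(G_2\setminus G)$ and use the three bullet conditions to kill the integrals over $G_1\setminus G$ and $G_2\setminus G$, reducing each side to $\int_G f(t)x(t)\,d\mu=\int_G g(t)x(t)\,d\mu$. Since these integrals exist and are finite by H\"older's inequality (as $f,g\in L_q$ and $x\in L_p$ with conjugate exponents), this direction is immediate.

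For the main direction $(\ref{LemmaAllowInfSetsEqLp:cond1})\Rightarrow(\ref{LemmaAllowInfSetsEqLp:cond3})$, I would introduce the extensions $\tilde f=f\chi_{G_1}$ and $\tilde g=g\chi_{G_2}$, which still lie in $L_q(X,\mu)$. The hypothesis (\ref{LemmaAllowInfSetsEqLp:cond1}) then becomes
\begin{equation*}
\int_X (\tilde f(t)-\tilde g(t))\,x(t)\,d\mu=0\quad\text{for every }x\in L_p(X,\mu).
\end{equation*}
The key step is to conclude $\tilde f=\tilde g$ almost everywhere on $X$. I would do this by testing against $x=\chi_E$ for every measurable $E\subseteq X$ with $\mu(E)<\infty$; such $x$ lies in $L_p$ for every $1\le p\le\infty$. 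This yields $\int_E(\tilde f-\tilde g)\,d\mu=0$ for all $E$ of finite measure. Using the $\sigma$-finiteness of $(X,\Sigma,\mu)$ and choosing $E$ inside the positive and negative parts of $\tilde f-\tilde g$ (intersected with an exhausting sequence of finite-measure sets), one obtains $\tilde f=\tilde g$ a.e. on $X$.

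Finally, restricting this a.e.\ equality to each of the three pairwise disjoint pieces $G$, $G_1\setminus G$, and $G_2\setminus G$ gives the three conclusions: on $G$ we have $\tilde f=f$ and $\tilde g=g$, hence $f=g$ a.e.; on $G_1\setminus G$ we have $\tilde g\equiv 0$ and $\tilde f=f$, hence $f=0$ a.e.; on $G_2\setminus G$ we have $\tilde f\equiv 0$ and $\tilde g=g$, hence $g=0$ a.e. The main obstacle is handling the boundary cases $q=1$ and $q=\infty$ uniformly, which the $\sigma$-finiteness assumption is precisely what resolves: characteristic functions of finite-measure sets sit in $L_p$ for all $p$ and suffice to test a.e.\ equality against an $L_q$ function.
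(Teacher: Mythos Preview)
Your argument is correct and complete. The paper itself does not supply a proof of this lemma; it merely cites it from \cite{DjinjaEtAll_LinItOpInGenSepKern}, so there is no in-paper proof to compare against. Your approach---extending by zero to reduce to a single $L_q$ function on $X$, testing against characteristic functions of finite-measure sets, and using $\sigma$-finiteness to conclude a.e.\ vanishing---is the standard route and is exactly what one would expect the cited proof to do.
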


\section{Representations by linear integral operators}\label{SecRepreBothLI}
Let $(X,\Sigma,\mu)$ be a  $\sigma$-finite measure space. In this section we consider  representations
of the covariance type commutation relation \eqref{Covrelation} when both $A$ and $B$ are linear integral operators \index{integral operator} acting from the Banach space $L_p(X,\mu)$ to itself for a fixed $p$ such that $\ 1\le p\le\infty$ defined as
\begin{equation}\label{OpeqnsSKthmGenSeparedKnlsAB}
  (Ax)(t)= \int\limits_{G_A} \sum_{i=1}^{l_A} a_i(t)c_i(s)x(s)d\mu_s,\quad (Bx)(t)= \int\limits_{G_B}\sum_{j=1}^{l_B} b_j(t)e_j(s)x(s)d\mu_s,
\end{equation}
for almost every $t$, where the index in $\mu_s$ indicates the variable of integration, $G_A\in \Sigma$ and $G_B\in \Sigma$.
When $B=0$, the relation \eqref{Covrelation} is trivially satisfied for any $A$. If $A=0$ then the relation \eqref{Covrelation} reduces to $F(0)B=0$. This implies either ($F(0)=0$ and $B$ can be any well defined operator) or $B=0$. Thus, we focus on construction and properties of non-zero representations of \eqref{Covrelation}.

We follow Folland \cite{FollandRA} to verify that under certain conditions, operators in \eqref{OpeqnsSKthmGenSeparedKnlsAB} are well defined and bounded.
If $a_i, b_j\in L_p(X,\mu)$, $c_i\in L_q(G_A,\mu)$, $e_j\in L_q(G_B,\mu)$, $i,j, l_A,l_B$ are positive integers,  $i=1,\ldots, l_A$, $j=1,\ldots, l_B$,  $1\leq q\leq\infty$, $\frac{1}{p}+\frac{1}{q}=1$,  then operators $A$ and $B$ are well defined and bounded. In fact, if we let
$(A_ix)(t)=\int\limits_{G_A} a_i(t)c_i(s)x(s)d\mu_s$ for almost every $t$ and $(Ax)(t)=\sum\limits_{i=1}^{l_A} (A_ix)(t)$, for almost every $t\in X$, and if $1<p<\infty$, then
by H\"older inequality
we have for all $x\in L_p(X,\mu)$,  $c_i(\cdot)x(\cdot)\in L_1(G_A,\mu)$ and
\begin{align*}
    \|A_ix\|^p_{L_p(X,\mu)} = & 
    \int\limits_{X} \big|\ \int\limits_{G_A} a_i(t)c_i(s)x(s)d\mu_s \big|^p d\mu_t \\
    \leq & \int\limits_{X} |a_i(t)|^p d\mu_t
  \big( \int\limits_{X} |I_{G_A}(s)c_i(s)|^q d\mu_s \big)^{\frac{p}{q}}
  \int\limits_{X} |x(s)|^p d\mu_s
   \\
   = & \int\limits_{X} |a_i(t)|^p d\mu_t \big( \int\limits_{G_A} |c_i(s)|^q d\mu_s \big)^{\frac{p}{q}} \int\limits_{X} |x(s)|^p d\mu_s \\
     = &\|a_i\|^p_{L_p(X,\mu)} \|c_i\|_{L_q(G_A,\mu)}^{p}\|x\|_{L_p(X,\mu)}^p,
\end{align*}
for $1< p <\infty$, $i=1,\ldots, l_A$. For $p=\infty$,  by H\"older inequality, for all $x\in L_\infty(X,\mu)$, $c_i(\cdot)x(\cdot)\in L_1(G_A,\mu)$ and
\begin{align*}
 \|A_ix\|_{L_\infty(X,\mu)} & =  \mathop{\esssup}_{t\in X} \big| \int\limits_{G_A} a_i(t)c_i(s)x(s)d\mu_s \big|
 \\
 & \leq
 \big(\mathop{\esssup}_{t\in X} |a_i(t)|\big) \big(\int\limits_{X} |I_{G_A}(s)c_i(s)|d\mu_s\big) \big( \mathop{\esssup}_{s\in X} |x(s)|\big)
\\
 &= \big( \mathop{\esssup}_{t\in X} |a_i(t)| \big)\big( \int\limits_{G_A} |c_i(s)|d\mu_s\big) \big( \mathop{\esssup}_{s\in X} |x(s)|\big)
 \\
 &
 =\|a_i\|_{L_\infty(X,\mu)}\|c_i\|_{L_1(G_A,\mu)}\|x\|_{L_\infty(X,\mu)},
\end{align*}
for $i=1,\ldots, l_A$. Similarly, one argues on the case $p=1$.   Since $L_p(X,\mu)$, $1\leq p \leq \infty$ is a linear space, we conclude that operators $A$, $B$ are well defined on $L_p(X,\mu)$ and bounded.

The following theorem generalizes  \cite[Theorem 2]{DjinjaEtAll_LinItOpInGenSepKern}
in the sense that $H(z)$ can be any polynomial and not only $H(z)=z$. In particular, this allows us to consider, simultaneously,
both the commutation relations of the form $AB=BF(A)$ and the reciprocal relations of the form  $BA=H(A)B$, for arbitrary polynomials $H$ and $F$.

\begin{theorem}\label{thmBothIntOPGenSeptedKernels}
Let $(X,\Sigma,\mu)$ be $\sigma$-finite measure space. Let $A:L_p(X,\mu)\to L_p(X,\mu)$,  $B:L_p(X,\mu)\to L_p(X,\mu)$, $1\le p\le\infty$ be nonzero operators defined as
\begin{equation} 
  (Ax)(t)= \int\limits_{G_A} \sum _{i=1}^{l_A} a_i(t)c_i(s)x(s)d\mu_s,\quad (Bx)(t)= \int\limits_{G_B}\sum_{j=1}^{l_B} b_j(t)e_j(s)x(s)d\mu_s,
\end{equation}
for almost every $t$, where the index in $\mu_s$ indicates the variable of integration, $G_A\in \Sigma$ and $G_B\in \Sigma$, $a_i,b_j\in L_p(X,\mu)$, $c_i\in L_q(G_A,\mu)$, $e_j\in L_q(G_B,\mu)$, $i,j, l_A,l_B$ are positive integers such that $1\leq i\leq  l_A$, $1\leq j\leq  l_B$ and  $1\leq q\leq\infty$ with $\frac{1}{p}+\frac{1}{q}=1$. Consider polynomials
$F(z)=\sum\limits_{j=0}^{n_f} f_j z^j$ and $H(z)=\sum\limits_{l=0}^{n_h} h_l z^l$ where $f_j, h_l \in\mathbb{R}$, $j=0,\ldots,n_f$,
$l=0,\ldots, n_h$. Let $ G=G_A\cap G_B,$ and
\[
 \gamma_{i_1}=1,\quad \gamma_{i_1,\ldots,i_m}=\prod_{l=1}^{m-1} Q_{G_A} (a_{i_{l+1}},c_{i_l}),\ m\ge 2.
\]
where $Q_{\Lambda}(u,v)$, $\Lambda\in\Sigma$, is defined by \eqref{QGpairingDefinition}. Then
$
  H(A)B=BF(A)
$
if and only if  the following conditions are fulfilled:
\begin{enumerate}[label=\textup{\arabic*.}, ref=\arabic*,wide]
  \item \label{thmBothIntOPGenSeptedKernelsRelHABeqBFA:cond1}
        for almost every $(t,s)\in  X\times G$, we have
            \begin{align*}
           &\resizebox{0.95\hsize}{!}{$\displaystyle (h_0-f_0) \sum_{k=1}^{l_B}  b_k(t)e_k(s)+\sum_{l=1}^{n_h}\sum_{k=1}^{l_B}
           \sum_{m_1,\ldots,m_l=1}^{l_A} h_l
           \gamma_{m_1,\ldots, m_l} a_{m_1}(t)Q_{G_A}(b_k,c_{m_l})e_k(s)$}\\
          & \resizebox{0.67\hsize}{!}{$\displaystyle
=\sum_{k=1}^{l_B}\sum_{j=1}^{n_f}\sum_{i_1,\ldots,i_j=1}^{l_A} f_j b_k(t)Q_{G_B}(e_k,a_{i_1})\gamma_{i_1,\ldots,i_j}c_{i_j}(s) $};
            \end{align*}
\item  \label{thmBothIntOPGenSeptedKernelsRelHABeqBFA:cond2}
for almost every $(t,s)\in  X\times (G_A\setminus G)$, we have
 \begin{equation*}
 \sum_{k=1}^{l_B}\sum_{j=1}^{n_f}
 \sum_{i_1,\ldots, i_j=1}^{l_A} f_j b_k(t)Q_{G_B}(e_k,a_{i_1})
 \gamma_{i_1,\ldots,i_j} c_{i_j}(s)=0;
 \end{equation*}
\item \label{thmBothIntOPGenSeptedKernelsRelHABeqBFA:cond3}
for almost every $(t,s)\in  X\times (G_B\setminus G)$, we have
\begin{equation*}
 \resizebox{0.9\hsize}{!}{$\displaystyle
(f_0-h_0) \sum_{k=1}^{l_B}  b_k(t)e_k(s)=
\sum_{l=1}^{n_h}\sum_{k=1}^{l_B} \sum_{m_1,\ldots m_l=1}^{l_A}
h_l \gamma_{m_1,\ldots,m_l} Q_{G_A}(b_k,c_{m_l})a_{m_1}(t)e_k(s).$}
\end{equation*}
\end{enumerate}
\end{theorem}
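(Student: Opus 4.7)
The plan is to reduce the operator identity $H(A)B=BF(A)$ to an identity between two integral operators applied to an arbitrary $x\in L_p(X,\mu)$, and then to invoke Lemma \ref{LemmaAllowInfSetsEqLp} to translate that identity into the three pointwise kernel conditions.

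\textbf{Step 1: Iterated powers of $A$.} First I would prove by induction on $m\ge 1$ the formula
\[
(A^m x)(t)=\int\limits_{G_A}\sum_{i_1,\dots,i_m=1}^{l_A} a_{i_1}(t)\,\gamma_{i_1,\dots,i_m}\,c_{i_m}(s)\,x(s)\,d\mu_s,
\]
for almost every $t\in X$. The base $m=1$ is the definition of $A$; the induction step follows by writing $A^{m+1}x=A(A^m x)$, using the integrand bounds from the Hölder estimates preceding the theorem to justify Fubini, and collecting the new inner factor $Q_{G_A}(a_{i_{m+1}},c_{i_m})$ that extends $\gamma_{i_1,\dots,i_m}$ to $\gamma_{i_1,\dots,i_{m+1}}$.

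\textbf{Step 2: Explicit integral form of both sides.} Applying Step 1 to $Bx$ and then collecting terms yields
\[
(A^l Bx)(t)=\int\limits_{G_B}\sum_{k,\,m_1,\dots,m_l} h_l\text{-free part: } a_{m_1}(t)\gamma_{m_1,\dots,m_l} Q_{G_A}(b_k,c_{m_l})e_k(s)\,x(s)\,d\mu_s,
\]
so that $(H(A)Bx)(t)$ becomes a single integral over $G_B$ with a kernel $K_H(t,s)$ built from the $h_l$ terms plus $h_0\sum_k b_k(t)e_k(s)$. Likewise, applying $B$ to $A^j x$ gives
\[
(BA^j x)(t)=\int\limits_{G_A}\sum_{k,\,i_1,\dots,i_j} b_k(t) Q_{G_B}(e_k,a_{i_1})\gamma_{i_1,\dots,i_j} c_{i_j}(s)\,x(s)\,d\mu_s,
\]
so $(BF(A)x)(t)$ is the sum of $f_0\int_{G_B}\sum_k b_k(t)e_k(s)x(s)\,d\mu_s$ and an integral over $G_A$ with kernel $K_F(t,s)$ built from the $f_j$ terms, $j\ge 1$.

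\textbf{Step 3: Application of Lemma \ref{LemmaAllowInfSetsEqLp}.} After moving the $f_0$ piece to the left-hand side, the equality $H(A)B=BF(A)$ becomes, for a.e.\ $t$ and all $x\in L_p(X,\mu)$,
\[
\int\limits_{G_B}\bigl[K_H(t,s)-f_0{\textstyle\sum_k} b_k(t)e_k(s)\bigr]x(s)\,d\mu_s
=\int\limits_{G_A} K_F(t,s)\,x(s)\,d\mu_s.
\]
Fixing a.e.\ $t$ and viewing these as integrals in $s$, Lemma \ref{LemmaAllowInfSetsEqLp} with $G_1=G_B$, $G_2=G_A$ produces three conditions: equality of the two kernels on $X\times G$, vanishing of the $G_B$ kernel on $X\times(G_B\setminus G)$, and vanishing of the $G_A$ kernel on $X\times(G_A\setminus G)$. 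Substituting the explicit forms of $K_H$ and $K_F$ from Step 2 and rearranging the $(h_0-f_0)$ term reproduces, respectively, Conditions \ref{thmBothIntOPGenSeptedKernelsRelHABeqBFA:cond1}, \ref{thmBothIntOPGenSeptedKernelsRelHABeqBFA:cond3}, and \ref{thmBothIntOPGenSeptedKernelsRelHABeqBFA:cond2}. Fubini converts the a.e.-in-$s$ conclusions of the lemma (for a.e. $t$) into a.e.-in-$(t,s)$ statements.

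\textbf{Main obstacle.} The routine part is bookkeeping of the multi-indices $(i_1,\dots,i_j)$ and $(m_1,\dots,m_l)$; the substantive points are (i) justifying the interchange of integrals at each iteration of $A$, which is handled uniformly by the Hölder bounds already established for $A$ and $B$, and (ii) packaging the right-hand side so that the two integrals in Step 3 are over exactly $G_B$ and $G_A$, allowing a single clean appeal to Lemma \ref{LemmaAllowInfSetsEqLp}. I expect the main difficulty to be the latter: one must keep the $f_0$ summand on the $G_B$ side (not on the $G_A$ side, where $K_F$ lives) so that the hypothesis of the lemma is literally satisfied and the three resulting conditions appear in the form stated.
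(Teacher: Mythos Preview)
Your proposal is correct and follows essentially the same route as the paper: establish the formula for $A^m$ by induction, compute $(H(A)Bx)(t)$ as a single integral over $G_B$ and $(BF(A)x)(t)$ as an $f_0$-integral over $G_B$ plus the $j\ge 1$ integral over $G_A$, move the $f_0$-piece to the left, and apply Lemma~\ref{LemmaAllowInfSetsEqLp} with $G_1=G_B$, $G_2=G_A$ to obtain the three conditions. Your remarks on Fubini justification and on upgrading the a.e.-in-$s$ conclusion to a.e.-in-$(t,s)$ are technicalities the paper leaves implicit, but they do not change the argument.
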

\begin{proof} \smartqed
We observe that since for each $i,j$, $1\le i \le l_A$, $1\le j\le l_B$,  $a_i,b_j\in L_p(X,\mu),\ 1\le p\le\infty$, $c_i\in L_q(G_A,\mu)$, $e_j\in L_q(G_B,\mu)$, where $1\le q\le \infty$, with $\frac{1}{p}+\frac{1}{q}=1,$ $L_p(X,\mu)$, $L_q(X,\mu)$ are linear spaces, then  operators $A$ and $B$ are well-defined. By direct calculation, we have
  \begin{eqnarray*}
  (A^2x)(t)&=&\int\limits_{G_A} \sum_{i=1}^{l_A} a_i(t)c_i(s)(Ax)(s)d\mu_s\\
  &=&\int\limits_{G_A} \sum_{i_1, i_2=1}^{l_A}a_{i_1}(t)c_{i_1}(s) a_{i_2}(s)d\mu_s\int\limits_{G_A} c_{i_2}(\tau)x(\tau)d\mu_{\tau}\\
  &=& \sum_{i_1, i_2=1}^{l_A} a_{i_1}(t)Q_{G_A}(a_{i_2},c_{i_1})\int\limits_{G_A} c_{i_2}(\tau)x(\tau)d\mu_\tau
  \\
  &=&
  \int\limits_{G_A} \sum_{i_1,i_2=1}^{l_A}\gamma_{i_1,i_2}a_{i_1}(t)c_{i_2}(\tau)x(\tau)d\mu_\tau,
  \\
    (A^3x)(t)&=&\int\limits_{G_A} \sum_{i_1=1}^{l_A} a_{i_1}(t)c_{i_1}(s)\left(\sum_{i_2,i_3=1}^{l_A} a_{i_2}(s)Q_{G_A}(a_{i_3},c_{i_3})c_{i_2}(\tau)x(\tau)d\mu_\tau \right)d\mu_s \\
 &=& \sum_{i_1,i_2,i_3=1}^{l_A} \int\limits_{G_A} a_{i_1}(t)Q_{G_A}(a_{i_3},c_{i_2})  c_{i_1}(s)a_{i_2}(s)d\mu_s   \int\limits_{G_A} c_{i_3}(\tau)x(\tau)d\mu_\tau \\
 &=& \int\limits_{G_A} \sum_{i_1,i_2,i_3=1}^{l_A}  Q_{G_A}(a_{i_3},c_{i_2})Q_{G_A}(a_{i_2},c_{i_1}) a_{i_1}(t)c_{i_3}(\tau)x(\tau)d\mu_\tau \\
 &=& \int\limits_{G_A} \sum_{i_1,i_2,i_3=1}^{l_A} \gamma_{i_1,i_2,i_3} a_{i_1}(t)c_{i_3}(\tau)x(\tau)d\mu_\tau.
\end{eqnarray*}
 for almost every $t$.  We suppose that
  \begin{equation}\label{AonCompositionIntegralOpGenSeparetedKernels}
  (A^{m}x)(t)=\int\limits_{G_A} \sum_{i_1,\ldots,i_m=1}^{l_A}
  \gamma_{i_1,\ldots,i_m}a_{i_1}(t)c_{i_m}(\tau)x(\tau)d\mu_\tau,\quad m=1,2,\ldots
  \end{equation}
 for almost every $t$. Then
  \begin{align*}\nonumber
    &\resizebox{0.97\hsize}{!}{$\displaystyle (A^{m+1}x)(t)=\int\limits_{G_A}\hspace{0mm}
    \sum_{i_1,\ldots,i_m=1}^{l_A}\gamma_{i_1,\ldots,i_m}a_{i_1}(t)c_{i_m}(\tau)
    \left(\hspace{2mm} \int\limits_{G_A}\hspace{0mm} \sum_{i_{m+1}=1}^{l_A} a_{i_{m+1}}(\tau)c_{i_{m+1}}(s)d\mu_s \right)d\mu_\tau $}
    \\
    &\hspace{1.45cm}= \int\limits_{G_A} \gamma_{i_1,\ldots,i_{m+1}}a_{i_1}(t)c_{i_{m+1}}(\tau)x(\tau)d\mu_\tau.
\end{align*}
for almost every $t$. Therefore, we have
\begin{equation*}
(F(A)x)(t)=f_0 x(t)+\int\limits_{G_A}
\sum_{j=1}^{n_f}\sum_{i_1,\ldots,i_j=1}^{l_A}
\gamma_{i_1,\ldots,i_j} f_j a_{i_1}(t)c_{i_j}(\tau)x(\tau)d\mu_\tau
\end{equation*}
for almost every $t$. Similarly, we have
\begin{equation*}
(H(A)x)(t)=h_0 x(t)+\int\limits_{G_A} \sum_{l=1}^{n_h}
\sum_{m_1,\ldots,m_l=1}^{l_A}
\gamma_{m_1,\ldots,m_l} h_l a_{m_1}(t)c_{m_j}(\tau)x(\tau)d\mu_\tau
\end{equation*}
for almost every $t$.
Then we compute,
  \begin{align}
  & \nonumber
  (H(A)Bx)(t)=\displaystyle h_0\int\limits_{G_B}\sum_{k=1}^{l_B}b_k(t)e_k(s)x(s)d\mu_s
  \\ \nonumber
  &\resizebox{0.89\hsize}{!}{$\displaystyle
  +\int\limits_{G_A} \sum_{l=1}^{n_h}\sum_{m_1,\ldots, m_l=1}^{l_A}
  h_l \gamma_{m_1,\ldots,m_l} a_{m_1}(t)c_{m_l}(s)d\mu_s
  \int\limits_{G_B}\sum_{k=1}^{l_B} b_k(s) e_k(\tau) x(\tau)d\mu_{\tau} $}\\
  &
  =h_0\int\limits_{G_B}\sum_{k=1}^{l_B}b_k(t)e_k(s)x(s)d\mu_s \nonumber \\
  &+
  \int\limits_{G_B} \sum_{l=1}^{n_h}\sum_{m_1,\ldots,m_l=1}^{l_A}
  \sum_{k=1}^{l_B} h_l \gamma_{m_1,\ldots,m_l} Q_{G_A}(b_k,c_{m_l}) a_{m_1}(t) e_k(\tau) x(\tau)d\mu_{\tau},  \label{CompABProofThmBothIntOpGenSeparatedKnDI}
  \end{align}
 for almost every $t$. Similarly,
  \begin{align}\nonumber
   &(BF(A)x)(t)=\int\limits_{G_B} \sum_{k=1}^{l_B} \delta_0 b_k(t)e_k(\tau)x(\tau)d\mu_{\tau} \\ \nonumber
   &
   + \int\limits_{G_B}\sum_{k=1}^{l_B} b_k(t)e_k(s)
   \int\limits_{G_A} \sum_{j=1}^{n_f} \sum_{i_1,\ldots,i_j=1}^{l_A}
   f_j \gamma_{i_1,\ldots,i_j} a_{i_1}(s)c_{i_j}(\tau) x(\tau)d\mu_\tau   \\
&= \int\limits_{G_B}
\sum_{k=1}^{l_B} f_0 b_k(t)e_k(\tau) x(\tau)d\mu_{\tau} \nonumber \\
& + \int\limits_{G_A}\sum_{k=1}^{l_B}\sum_{j=1}^{n_f} \sum_{i_1,\ldots,i_j=1}^{l_A} f_j
\gamma_{i_1,\ldots,i_j} Q_{G_B}(e_k,a_{i_1})b_k(t) c_{i_j}(\tau)x(\tau)d\mu_{\tau},
\label{EqBFAProofThmBothIntOpGenSepratKnDI}
  \end{align}
  for almost every $t$.
  Thus, $(H(A)Bx)(t)=(BF(A)x)(t)$ for all $x\in L_p( X,\mu)$ if and only if
 \begin{align*}
  & \resizebox{0.95\hsize}{!}{$\displaystyle \int\limits_{G_B} (h_0-f_0)\left(\sum_{k=1}^{l_B}  b_k(t)e_k(s)+\sum_{l=1}^{n_h}\sum_{k=1}^{l_B}\sum_{m_1,\ldots,m_l=1}^{l_A} h_l
  \gamma_{m_1,\ldots, m_l} Q_{G_A}(b_k,c_{m_l})a_m(t)e_k(s)\right) x(s)d\mu_s $}\\
  & \resizebox{0.72\hsize}{!}{$\displaystyle
  =\int\limits_{G_A} \left(\sum_{k=1}^{l_B}\sum_{j=1}^{n_f}
  \sum_{i_1,\ldots,i_j=1}^{l_A} f_j \gamma_{i_1,\ldots,i_j} Q_{G_B}(e_k,a_{i_1})b_k(t)c_{i_j}(s)\right)x(s)d\mu_s.$}
 \end{align*}
Then by applying Lemma \ref{LemmaAllowInfSetsEqLp}
we conclude that $H(A)B=BF(A)$ if and only if
   \begin{enumerate}[label=\textup{\arabic*.}, ref=\arabic*]
  \item for almost every $(t,s)\in  X\times G$,
            \begin{align*}
            & (h_0-f_0) \sum_{k=1}^{l_B}  b_k(t)e_k(s)\\
            &+\sum_{l=1}^{n_h}
            \sum_{k=1}^{l_B}\sum_{m_1,\ldots,m_l=1}^{l_A}
            h_l \gamma_{m_1,\ldots, m_l} a_{m_1}(t)Q_{G_A}(b_k,c_{m_l})e_k(s)\\
            &
            =\sum_{k=1}^{l_B}\sum_{j=1}^{n_f}\sum_{i_1,\ldots,i_j=1}^{l_A}
            f_j b_k(t)Q_{G_B}(e_k,a_{i_1})\gamma_{i_1,\ldots,i_j}c_{i_j}(s).
            \end{align*}
\item  for almost every $(t,s)\in  X\times (G_A\setminus G)$, we have
 \begin{equation*}
 \sum_{k=1}^{l_B}\sum_{j=1}^{n_f} \sum_{i_1,\ldots, i_j=1}^{l_A}
 f_j b_k(t)Q_{G_B}(e_k,a_{i_1}) \gamma_{i_1,\ldots,i_j} c_{i_j}(s)=0.
 \end{equation*}
\item  for almost every $(t,s)\in  X\times (G_B\setminus G)$, we have
\begin{align}
& (f_0-h_0) \sum_{k=1}^{l_B}  b_k(t)e_k(s) \nonumber \\
& =\sum_{l=1}^{n_h}\sum_{k=1}^{l_B} \sum_{m_1,\ldots m_l=1}^{l_A}
h_l \gamma_{m_1,\ldots,m_l} Q_{G_A}(b_k,c_{m_l})a_{m_1}(t)e_k(s).
\tag*{\qed} \end{align}
\end{enumerate}
\end{proof}
\begin{remark}\label{RemOpDefInSameIntervalGenSepKernels}
In Theorem \ref{thmBothIntOPGenSeptedKernels}, when $G_A=G_B=G$, conditions \ref{thmBothIntOPGenSeptedKernelsRelHABeqBFA:cond2} and \ref{thmBothIntOPGenSeptedKernelsRelHABeqBFA:cond3} are taken on set of measure zero so we can ignore them. Thus, we only remain with condition \ref{thmBothIntOPGenSeptedKernelsRelHABeqBFA:cond1}. When $G_A\not=G_B$ we need to check also conditions \ref{thmBothIntOPGenSeptedKernels:cond2} and \ref{thmBothIntOPGenSeptedKernels:cond3} outside the intersection  $G=G_A\cap G_B$.
\end{remark}

The following corollary was proved in \cite[Corollary 3]{DjinjaEtAll_LinItOpInGenSepKern} and we notice that it is also a corollary for Theorem \ref{thmBothIntOPGenSeptedKernels} when $H(z)=z$, for all $z\in\mathbb{R}$.
\begin{corollary}[\cite{DjinjaEtAll_LinItOpInGenSepKern}]\label{corHAAthmBothIntOPGenSeptedKernels}
Let $(X,\Sigma,\mu)$ be $\sigma$-finite measure space. Let $A:L_p(X,\mu)\to L_p(X,\mu)$,  $B:L_p(X,\mu)\to L_p(X,\mu)$, $1\le p\le\infty$ be nonzero operators defined as follows
\begin{equation*}
  (Ax)(t)= \int\limits_{G_A} \sum _{i=1}^{l_A} a_i(t)c_i(s)x(s)d\mu_s,\quad (Bx)(t)= \int\limits_{G_B}\sum_{j=1}^{l_B} b_j(t)e_j(s)x(s)d\mu_s,
\end{equation*}
for almost every $t$, where the index in $\mu_s$ indicates the variable of integration, $G_A\in \Sigma$ and $G_B\in \Sigma$, $a_i,b_j\in L_p(X,\mu)$, $c_i\in L_q(G_A,\mu)$, $e_j\in L_q(G_B,\mu)$, $i,j, l_A,l_B$ are positive integers such that $1\le i\le  l_A$, $1\leq j\leq  l_B$ and  $1\leq q\leq\infty$ with $\frac{1}{p}+\frac{1}{q}=1$. Consider a polynomial defined by
$F(z)=\sum\limits_{j=0}^{n} f_j z^j$, where $f_j \in\mathbb{R}$, $j=0,\ldots,n$. Let $ G=G_A\cap G_B,$ and
\[
 \gamma_{i_1}=1,\quad \gamma_{i_1,\ldots,i_m}=\prod_{l=1}^{m-1} Q_{G_A} (a_{i_{l+1}},c_{i_l}),\ m\ge 2.
\]
where $Q_{\Lambda}(u,v)$, $\Lambda\in\Sigma$, is defined by \eqref{QGpairingDefinition}. Then
$
  AB=BF(A)
$
if and only if  the following conditions are fulfilled:
\begin{enumerate}[label=\textup{\arabic*.}, ref=\arabic*]
  \item \label{corHAAthmBothIntOPGenSeptedKernelsRelABeqBFA:cond1}
        for almost every $(t,s)\in  X\times G$, we have
            \begin{gather*}
            - \sum_{k=1}^{l_B} \delta_0 b_k(t)e_k(s)+\sum_{k=1}^{l_B}\sum_{m=1}^{l_A}a_m(t)Q_{G_A}(b_k,c_m)e_k(s)\\
             =\sum_{k=1}^{l_B}\sum_{j=1}^{n}\sum_{i_1,\ldots,i_j=1}^{l_A} f_j b_k(t)Q_{G_B}(e_k,a_{i_1})\gamma_{i_1,\ldots,i_j}c_{i_j}(s);
            \end{gather*}
\item  \label{corHAAthmBothIntOPGenSeptedKernelsABeqBFA:cond2}  for almost every $(t,s)\in  X\times (G_A\setminus G)$, we have
     \begin{equation*}
 \sum_{k=1}^{l_B}\sum_{j=1}^{n} \sum_{i_1,\ldots, i_j=1}^{l_A} f_j b_k(t)Q_{G_B}(e_k,a_{i_1}) \gamma_{i_1,\ldots,i_j} c_{i_j}(s)=0;
 \end{equation*}

\item \label{corHAAthmBothIntOPGenSeptedKernelsABeqBFA:cond3} for almost every $(t,s)\in  X\times (G_B\setminus G)$, we have
      \begin{equation*}
 \sum_{k=1}^{l_B} f_0 b_k(t)e_k(s)=\sum_{k=1}^{l_B} \sum_{m=1}^{l_A} Q_{G_A}(b_k,c_m)a_m(t)e_k(s).
\end{equation*}
\end{enumerate}
\end{corollary}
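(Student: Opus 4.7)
The plan is to deduce this corollary as a direct specialization of Theorem \ref{thmBothIntOPGenSeptedKernels}. Setting $H(z)=z$ corresponds to taking $n_h=1$, $h_0=0$, and $h_1=1$ in the theorem; under this choice the commutation relation $H(A)B=BF(A)$ collapses to $AB=BF(A)$, which is exactly what the corollary asserts. So the task reduces to checking that, with these values substituted, the three conditions of Theorem \ref{thmBothIntOPGenSeptedKernels} match the three conditions listed in the corollary. The equivalence is preserved automatically since the theorem's conditions are already stated as necessary and sufficient.

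First I would handle condition \ref{thmBothIntOPGenSeptedKernelsRelHABeqBFA:cond1}: with $h_0=0$, the prefactor $(h_0-f_0)$ becomes $-f_0$, recovering the first summand of the corollary's condition \ref{corHAAthmBothIntOPGenSeptedKernelsRelABeqBFA:cond1} (interpreting the $\delta_0$ appearing there as $f_0$). The outer sum $\sum_{l=1}^{n_h}$ degenerates to the single term $l=1$, where $h_1=1$ and $\gamma_{m_1}=1$ by definition, so the multi-index sum
\[
\sum_{m_1,\ldots,m_l=1}^{l_A} h_l\,\gamma_{m_1,\ldots,m_l}\, a_{m_1}(t)\,Q_{G_A}(b_k,c_{m_l})\,e_k(s)
\]
reduces to $\sum_{m=1}^{l_A} a_m(t)\,Q_{G_A}(b_k,c_m)\,e_k(s)$, matching the second summand on the left-hand side of the corollary. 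The right-hand side is unchanged since it depends only on $F$.

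Next, condition \ref{thmBothIntOPGenSeptedKernelsRelHABeqBFA:cond2} of the theorem does not involve $H$ at all, so it transfers verbatim to condition \ref{corHAAthmBothIntOPGenSeptedKernelsABeqBFA:cond2} of the corollary. For condition \ref{thmBothIntOPGenSeptedKernelsRelHABeqBFA:cond3}, with $h_0=0$ the left side becomes $f_0\sum_{k=1}^{l_B}b_k(t)e_k(s)$, and on the right the outer sum collapses to $l=1$ so, using $h_1=1$ and $\gamma_{m_1}=1$, one obtains $\sum_{k=1}^{l_B}\sum_{m=1}^{l_A} Q_{G_A}(b_k,c_m)\,a_m(t)\,e_k(s)$, which is exactly condition \ref{corHAAthmBothIntOPGenSeptedKernelsABeqBFA:cond3}. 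Since each step is a mechanical substitution, there is essentially no obstacle; the only care required is to track the degenerate values $\gamma_{m_1}=1$ and $h_1=1$ correctly when collapsing the multi-index sums of the theorem to single-index sums in the corollary.
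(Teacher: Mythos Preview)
Your proposal is correct and follows exactly the approach indicated in the paper, which simply remarks that the corollary is the special case $H(z)=z$ of Theorem~\ref{thmBothIntOPGenSeptedKernels}. You in fact supply more detail than the paper does, carefully tracking how each of the three conditions collapses under $n_h=1$, $h_0=0$, $h_1=1$, and correctly noting that the symbol $\delta_0$ in condition~\ref{corHAAthmBothIntOPGenSeptedKernelsRelABeqBFA:cond1} should be read as $f_0$.
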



\begin{corollary}
Let $(X,\Sigma,\mu)$ be $\sigma$-finite measure space. Let $A:L_p(X,\mu)\to L_p(X,\mu)$,  $B:L_p(X,\mu)\to L_p(X,\mu)$, $1\le p\le\infty$ be nonzero operators defined as
\begin{equation} 
  (Ax)(t)= \int\limits_{G_A} \sum _{i=1}^{l_A} a_i(t)c_i(s)x(s)d\mu_s,\quad (Bx)(t)= \int\limits_{G_B}\sum_{j=1}^{l_B} b_j(t)e_j(s)x(s)d\mu_s,
\end{equation}
for almost every $t$, where the index in $\mu_s$ indicates the variable of integration, $G_A\in \Sigma$ and $G_B\in \Sigma$, $a_i,b_j\in L_p(X,\mu)$, $c_i\in L_q(G_A,\mu)$, $e_j\in L_q(G_B,\mu)$, $i,j, l_A,l_B$ are positive integers such that $1\leq i\leq  l_A$, $1\leq j\leq  l_B$ and  $1\leq q\leq\infty$ with $\frac{1}{p}+\frac{1}{q}=1$. Consider polynomial $H:\mathbb{R}\to \mathbb{R}$ defined by
 $H(z)=\sum\limits_{l=0}^{n_h} h_l z^j$ where $ h_l \in\mathbb{R}$,  $l=0,\ldots, n_h$. Let $ G=G_A\cap G_B,$ and
\[
 \gamma_{i_1}=1,\quad \gamma_{i_1,\ldots,i_m}=\prod_{l=1}^{m-1} Q_{G_A} (a_{i_{l+1}},c_{i_l}),\ m\ge 2.
\]
where $Q_{\Lambda}(u,v)$, $\Lambda\in\Sigma$, is defined by \eqref{QGpairingDefinition}. Then
$
 BA= H(A)B
$
if and only if  the following conditions are fulfilled:
\begin{enumerate}[label=\textup{\arabic*.}, ref=\arabic*]
  \item \label{thmBothIntOPGenSeptedKernels:cond1}
        for almost every $(t,s)\in  X\times G$, we have
            \begin{gather*}
            h_0 \sum_{k=1}^{l_B}  b_k(t)e_k(s)+\sum_{l=1}^{n_h}\sum_{k=1}^{l_B}\sum_{m_1,\ldots,m_l=1}^{l_A} h_l \gamma_{m_1,\ldots, m_l} a_{m_1}(t)Q_{G_A}(b_k,c_{m_l})e_k(s)\\
             =\sum_{k=1}^{l_B}\sum_{i=1}^{l_A} b_k(t)Q_{G_B}(e_k,a_{i})c_{i}(s);
            \end{gather*}
\item  \label{thmBothIntOPGenSeptedKernels:cond2}  for almost every $(t,s)\in  X\times (G_A\setminus G)$, we have
 \begin{equation*}
 \sum_{k=1}^{l_B} \sum_{i=1}^{l_A} b_k(t)Q_{G_B}(e_k,a_{i}) c_{i}(s)=0;
 \end{equation*}
\item \label{thmBothIntOPGenSeptedKernels:cond3} for almost every $(t,s)\in  X\times (G_B\setminus G)$, we have
\begin{equation*}
 \resizebox{0.9\hsize}{!}{$\displaystyle
-h_0 \sum_{k=1}^{l_B}  b_k(t)e_k(s)=
\sum_{l=1}^{n_h}\sum_{k=1}^{l_B}
\sum_{m_1,\ldots,m_l=1}^{l_A}
h_l \gamma_{m_1,\ldots,m_l} Q_{G_A}(b_k,c_{m_l})a_{m_1}(t)e_k(s).$}
\end{equation*}
\end{enumerate}
\end{corollary}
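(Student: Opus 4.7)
The plan is to obtain this corollary as an immediate specialization of Theorem \ref{thmBothIntOPGenSeptedKernels}. The equation $BA=H(A)B$ is the same as $H(A)B=BF(A)$ with the choice $F(z)=z$, that is $n_f=1$, $f_0=0$, $f_1=1$. So the strategy is simply to write the three conditions \ref{thmBothIntOPGenSeptedKernelsRelHABeqBFA:cond1}, \ref{thmBothIntOPGenSeptedKernelsRelHABeqBFA:cond2}, \ref{thmBothIntOPGenSeptedKernelsRelHABeqBFA:cond3} of Theorem \ref{thmBothIntOPGenSeptedKernels} with this $F$ and check that they simplify exactly to conditions \ref{thmBothIntOPGenSeptedKernels:cond1}, \ref{thmBothIntOPGenSeptedKernels:cond2}, \ref{thmBothIntOPGenSeptedKernels:cond3} of the corollary.

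First I would record that, with $f_0=0$, $f_1=1$ and $f_j=0$ for $j\ge 2$, the only surviving term of the sum over $j$ in conditions \ref{thmBothIntOPGenSeptedKernelsRelHABeqBFA:cond1} and \ref{thmBothIntOPGenSeptedKernelsRelHABeqBFA:cond2} of Theorem \ref{thmBothIntOPGenSeptedKernels} is the $j=1$ term, which because $\gamma_{i_1}=1$ collapses to
\[
\sum_{k=1}^{l_B}\sum_{i=1}^{l_A} b_k(t)\,Q_{G_B}(e_k,a_i)\,c_i(s).
\]
Substituting this and $h_0-f_0=h_0$ into condition \ref{thmBothIntOPGenSeptedKernelsRelHABeqBFA:cond1} produces condition \ref{thmBothIntOPGenSeptedKernels:cond1} of the corollary, and the same substitution into condition \ref{thmBothIntOPGenSeptedKernelsRelHABeqBFA:cond2} yields condition \ref{thmBothIntOPGenSeptedKernels:cond2}.

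For the third condition, I would substitute $f_0-h_0=-h_0$ into condition \ref{thmBothIntOPGenSeptedKernelsRelHABeqBFA:cond3} of the theorem; no further simplification is needed and the equality becomes condition \ref{thmBothIntOPGenSeptedKernels:cond3} of the corollary verbatim. The converse implication uses the ``if and only if'' character of Theorem \ref{thmBothIntOPGenSeptedKernels} in the same way.

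There is no real obstacle here beyond careful index bookkeeping; in particular, one does not need to redo the computation of the kernel of $H(A)$ or to re-invoke Lemma \ref{LemmaAllowInfSetsEqLp}, both of which are already contained in the proof of Theorem \ref{thmBothIntOPGenSeptedKernels}. The only mildly delicate point is to make sure that the collapse $j=1$, $\gamma_{i_1}=1$ is carried out correctly on both sides of the equality on $X\times G$ and on the exceptional set $X\times(G_A\setminus G)$, and that the roles of the kernels in $G_A\setminus G$ and $G_B\setminus G$ are not interchanged; apart from that the corollary is a direct transcription.
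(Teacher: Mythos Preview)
Your proposal is correct and matches the paper's approach exactly: the paper's proof consists of the single sentence ``It follows by Theorem \ref{thmBothIntOPGenSeptedKernels} when $F(z)=z$, for all $z\in\mathbb{R}$.'' Your write-up simply spells out the substitution $n_f=1$, $f_0=0$, $f_1=1$, $\gamma_{i_1}=1$ in more detail than the paper does.
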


\begin{proof}
  It follows by Theorem \ref{thmBothIntOPGenSeptedKernels} when $F(z)=z$, for all $z\in\mathbb{R}$.
  \QEDB
\end{proof}

\begin{remark}
Let $(G,\Sigma,\mu)$ be a $\sigma-$finite measure space.
Let $A:L_p(G,\mu)\to L_p(G,\mu)$, $B:L_p(G,\mu)\to L_p(G,\mu)$, $1\leq p<\infty$  be linear integral operators with kernels
$k_A(t,s)$, $k_B(t,s)$, $(t,s)\in G\times G$, $G\in \Sigma$,
$F(z)=\sum\limits_{j=0}^{n} f_jz^j$, $j=0,\ldots, n$, $z\in\mathbb{R}$.
If the correspondent adjoint operators exist and are bounded
$A^*:L_q(G,\mu)\to L_q(G,\mu)$, $B^*: L_q(G,\mu)\to L_q(G,\mu)$, $1<q\leq \infty$, such that $\frac{1}{p}+\frac{1}{q}=1$,
then $AB=BF(A)$ if and only if $B^*A^*=F(A^*)B^*$, where $k^*_A(t,s)=k_A(s,t)$, $k^*_B(t,s)=k_B(s,t)$,
$(s,t)\in G\times G$. In fact, this follows by \cite[Proposition 1.4]{ConwayFunctionalAnalysis} and
\cite[Example 1.5 and 1.6]{ConwayFunctionalAnalysis}.
By setting  $A^*=C$, $B^*=D$ then $B^*A^*=B^*F(A^*)$ is equivalent to $DC=F(C)D$, thus this stabilises
a relation between commutation relations $CD=DF(C)$ and $DC=F(C)D$.
\end{remark}

The following corollary of Theorem \ref{thmBothIntOPGenSeptedKernels} is concerned with  representations by integral operators of another important family of covariance commutation relations associated to monomials $F$. It was proved in \cite[Theorem 2]{DjinjaEtAll_LinItOpInGenSepKern} and we notice that it is a corollary of  Theorem \ref{thmBothIntOPGenSeptedKernels} when $n=d$, $\delta_j=0$, for all integers $j\in \{0,\ldots, n\}\setminus \{d\}$, $\delta_d=\delta$ and $H(z)=z$. We also construct sequences of non-commuting operators that satisfy monomial covariance commutation relations in Section \ref{SecRepreBothLISeqOfNonCommutOpConvCommuting}.
\begin{corollary}[\cite{DjinjaEtAll_LinItOpInGenSepKern}]\label{corDiedroRelBothIntOPGenSeptedKernels}
Let $(X,\Sigma,\mu)$ be $\sigma$-finite measure space. Let $A:L_p(X,\mu)\to L_p(X,\mu)$,  $B:L_p(X,\mu)\to L_p(X,\mu)$, $1\le p\le\infty$ be nonzero operators defined as
\begin{equation}\label{OpeqnsSKthm}
  (Ax)(t)= \int\limits_{G_A} \sum _{i=1}^{l_A} a_i(t)b_i(s)x(s)d\mu_s,\quad (Bx)(t)= \int\limits_{G_B}\sum_{j=1}^{l_B} c_j(t)e_j(s)x(s)d\mu_s,
\end{equation}
for almost every $t$, where the index in $\mu_s$ indicates the variable of integration, $G_A\in \Sigma$ and $G_B\in \Sigma$, $a_i,b_j\in L_p(X,\mu)$, $c_i\in L_q(G_A,\mu)$, $e_j\in L_q(G_B,\mu)$, $i,j, l_A,l_B$ are positive integers such that $1\le i\le  l_A$, $1\le j\le  l_B$ and  $1\le q\le\infty$ with $\frac{1}{p}+\frac{1}{q}=1$. Consider a polynomial defined by
$F(z)=\delta z^d$, where $\delta \in\mathbb{R}$, $d\in\mathbb{Z}$, $d>0$. Let $ G=G_A\cap G_B,$ and
\[
 \gamma_{i_1}=1,\quad \gamma_{i_1,\ldots,i_m}=\prod_{l=1}^{m-1} Q_{G_A} (a_{i_{l+1}},c_{i_l}),\ m\ge 2.
\]
where $Q_{\Lambda}(u,v)$, $\Lambda\in\Sigma$, is defined by \eqref{QGpairingDefinition}. Then
$
  AB=BF(A)
$
if and only if  the following conditions are fulfilled:
\begin{enumerate}[leftmargin=*, label=\textup{\arabic*.}, ref=\arabic*]
  \item
        for almost every $(t,s)\in  X\times G$, we have
            \begin{equation*}\hspace{-0.4cm}
           \sum_{k=1}^{l_B}\sum_{m=1}^{l_A}a_m(t)Q_{G_A}(b_k,c_m)e_k(s)=
             \sum_{k=1}^{l_B}\sum_{i_1,\ldots,i_d=1}^{l_A} \delta b_k(t)Q_{G_B}(e_k,a_{i_1})\gamma_{i_1,\ldots,i_d}c_{i_d}(s)
            \end{equation*}
\item  for almost every $(t,s)\in  X\times (G_A\setminus G)$, we have
 \begin{equation*}
 \sum_{k=1}^{l_B} \sum_{i_1,\ldots, i_d=1}^{l_A} \delta b_k(t)Q_{G_B}(e_k,a_{i_1}) \gamma_{i_1,\ldots,i_d} c_{i_d}(s)=0.
 \end{equation*}
\item for almost every $(t,s)\in  X\times (G_B\setminus G)$, we have
\begin{equation*}
\sum_{k=1}^{l_B} \sum_{m=1}^{l_A} Q_{G_A}(b_k,c_m)a_m(t)e_k(s)=0.
\end{equation*}
\end{enumerate}
\end{corollary}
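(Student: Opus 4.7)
The plan is to obtain this corollary as an immediate specialization of Theorem~\ref{thmBothIntOPGenSeptedKernels}. I would set $H(z)=z$, that is, $n_h=1$, $h_0=0$, $h_1=1$, and $F(z)=\delta z^d$, that is, $n_f=d$, $f_d=\delta$, and $f_j=0$ for every $j\in\{0,1,\dots,d-1\}$. Under these choices one has $H(A)=A$ and $BF(A)=\delta BA^d$, so the general relation $H(A)B=BF(A)$ of the theorem reduces precisely to the monomial covariance relation $AB=BF(A)=\delta BA^d$ considered here; in particular the hypothesis that $A,B$ are well-defined bounded operators on $L_p(X,\mu)$ is inherited unchanged from the theorem.

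Next I would substitute these coefficients into the three necessary and sufficient conditions \ref{thmBothIntOPGenSeptedKernelsRelHABeqBFA:cond1}--\ref{thmBothIntOPGenSeptedKernelsRelHABeqBFA:cond3} of the theorem and simplify. In condition~\ref{thmBothIntOPGenSeptedKernelsRelHABeqBFA:cond1}, the prefactor $(h_0-f_0)$ vanishes, the $l$-sum on the left collapses to the single term $l=1$ for which $\gamma_{m_1}=1$, and the $j$-sum on the right collapses to the single term $j=d$ with coefficient $\delta$; this yields exactly the left- and right-hand sides of condition~1 of the corollary. In condition~\ref{thmBothIntOPGenSeptedKernelsRelHABeqBFA:cond2}, only the summand $j=d$ survives in the $j$-sum, producing condition~2. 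In condition~\ref{thmBothIntOPGenSeptedKernelsRelHABeqBFA:cond3}, the left-hand side vanishes because $f_0-h_0=0$ and the right-hand side collapses to $l=1$ with $\gamma_{m_1}=1$; moving the single surviving term to the other side yields condition~3.

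The only genuine subtlety is bookkeeping: the theorem writes the kernel of $A$ as $a_i(t)c_i(s)$ and the kernel of $B$ as $b_j(t)e_j(s)$, whereas the corollary writes them as $a_i(t)b_i(s)$ and $c_j(t)e_j(s)$. I would accordingly set up a short translation dictionary, namely theorem's $c_i$ corresponds to corollary's $b_i$ and theorem's $b_j$ corresponds to corollary's $c_j$, and then rename symbols consistently inside every $Q_{G_A}(\cdot,\cdot)$ and $Q_{G_B}(\cdot,\cdot)$ bracket and in each summation. No new analytical input is required; the derivation is purely algebraic once the polynomials have been fixed, and I do not anticipate any real obstacle beyond keeping the indices for $b$'s and $c$'s straight across the two naming conventions.
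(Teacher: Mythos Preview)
Your proposal is correct and follows exactly the route the paper indicates: the paper does not give a separate proof but simply observes that the corollary is Theorem~\ref{thmBothIntOPGenSeptedKernels} specialized to $H(z)=z$ and $F(z)=\delta z^d$ (equivalently, $n=d$, $f_d=\delta$, $f_j=0$ for $j\neq d$). Your identification of the renaming needed between the theorem's $(a_i,c_i,b_j,e_j)$ and the corollary's $(a_i,b_i,c_j,e_j)$ is also on point, and is indeed the only bookkeeping to watch.
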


The following statement was proved in \cite[Proposition 1]{DjinjaEtAll_LinItOpInGenSepKern}. We will use this proposition in Section \ref{SecRepreBothLISeqOfNonCommutOpConvCommuting} to compute and study convergence of commutator.
\begin{proposition}\label{PropSimilarthmBothIntOpGensepKernellsCommutativityCommutator}
Let $(X,\Sigma,\mu)$ be $\sigma$-finite measure space. Let $A:L_p(X,\mu)\to L_p(X,\mu)$,  $B:L_p(X,\mu)\to L_p(X,\mu)$, $1\le p\le\infty$ be nonzero operators defined as
\begin{equation*}
  (Ax)(t)= \int\limits_{G} \sum _{i=1}^{l_A} a_i(t)c_i(s)x(s)d\mu_s,\quad (Bx)(t)= \int\limits_{G}\sum_{j=1}^{l_B} b_j(t)e_j(s)x(s)d\mu_s,
\end{equation*}
for almost every $t$, where the index in $\mu_s$ indicates the variable of integration, $G_A\in \Sigma$ and $G_B\in \Sigma$, $a_i,b_j\in L_p(X,\mu)$, $c_i\in L_q(G,\mu)$, $e_j\in L_q(G,\mu)$, $i,j, l_A,l_B$ are positive integers such that $1\le i\le  l_A$, $1\le j\le  l_B$ and  $1\le q\le\infty$ with $\frac{1}{p}+\frac{1}{q}=1$. Let $ G=G_A\cap G_B$ and
$Q_{\Lambda}(u,v)$, $\Lambda\in\Sigma$, is defined by \eqref{QGpairingDefinition}. Then,  for almost every $t\in  X$,
\begin{align*}
(AB)x(t) & = \int\limits_{G} \sum\limits_{k=1}^{l_B}\sum\limits_{m=1}^{l_A}a_m(t)Q_{G_A}(b_k,c_m)e_k(s) x(s)d\mu_s,\\
(BA)x(t) & = \int\limits_{G} \sum\limits_{k=1}^{l_B}\sum\limits_{i_1=1}^{l_A}  b_k(t)Q_{G_B}(e_k,a_{i_1})c_{i_1}(s)x(s)d\mu_s, \\
(AB-BA)x(t) &= \int\limits_{G} \big(\sum\limits_{k=1}^{l_B}\sum\limits_{m=1}^{l_A}a_m(t)Q_{G_A}(b_k,c_m)e_k(s)\\
  & \hspace{2.5cm} -\sum\limits_{k=1}^{l_B}\sum\limits_{i_1=1}^{l_A}  b_k(t)Q_{G_B}(e_k,a_{i_1})c_{i_1}(s)\big)x(s)d\mu_s.
\end{align*}
\end{proposition}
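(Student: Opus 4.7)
The plan is to establish the three displayed formulas by a direct computation that mirrors the beginning of the proof of Theorem \ref{thmBothIntOPGenSeptedKernels}, using only the separable form of the kernels and Fubini's theorem. Since the operators $A$ and $B$ are defined on the same domain of integration $G = G_A \cap G_B$, the composition reduces to scalar pairings of the $L_p$/$L_q$ coefficient functions via the functional $Q_G$, which (together with the convention in the statement) matches $Q_{G_A}$ and $Q_{G_B}$ for the pairings appearing in the conclusion.

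First I would compute $(ABx)(t)$. Applying the definition of $A$ to $(Bx)$ gives
\[
(ABx)(t) = \int\limits_{G} \sum_{m=1}^{l_A} a_m(t) c_m(s) \Bigl(\int\limits_{G} \sum_{k=1}^{l_B} b_k(s) e_k(\tau) x(\tau)\, d\mu_\tau\Bigr) d\mu_s.
\]
The finite sums can be taken outside the integrals, and the Hölder-type bounds established in the discussion preceding Theorem \ref{thmBothIntOPGenSeptedKernels} show that each summand produces an absolutely integrable function on $G \times G$ with respect to $(s,\tau)$. This legitimizes Fubini's theorem, which separates the $s$ and $\tau$ integrals and yields
\[
(ABx)(t) = \int\limits_{G} \sum_{k=1}^{l_B} \sum_{m=1}^{l_A} a_m(t)\, Q_{G_A}(b_k, c_m)\, e_k(\tau)\, x(\tau)\, d\mu_\tau,
\]
which is the first claimed formula (after renaming $\tau \to s$).

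The computation for $(BAx)(t)$ is symmetric: exchanging the roles of $A$ and $B$, one applies $B$ to $(Ax)$ and again invokes Fubini to isolate the inner pairing $\int_G e_k(s) a_{i_1}(s)\, d\mu_s = Q_{G_B}(e_k, a_{i_1})$, producing the second formula. The third formula, for $(AB-BA)x(t)$, then follows immediately from the linearity of the integral by subtracting the two expressions and combining them under a single integral over $G$.

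The only nontrivial point is the justification of the interchange of integration order, and this is not really an obstacle: the integrand $a_m(t) c_m(s) b_k(s) e_k(\tau) x(\tau)$ factors into one piece depending only on $s$ (namely $c_m(s) b_k(s)$, whose integral over $G$ is finite because $c_m \in L_q$ and $b_k \in L_p$ with conjugate exponents) and one piece depending only on $\tau$ (namely $e_k(\tau) x(\tau)$, integrable for the same reason), so Tonelli and then Fubini apply directly. Everything else is bookkeeping on finite sums, and the proposition follows.
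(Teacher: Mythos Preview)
Your proposal is correct and follows exactly the approach the paper itself uses: the proposition is cited from \cite{DjinjaEtAll_LinItOpInGenSepKern} without proof here, but the identical direct computation (substitute one operator into the other, pull finite sums outside, and use Fubini to isolate the scalar pairings $Q_{G_A}(b_k,c_m)$ and $Q_{G_B}(e_k,a_{i_1})$) is precisely what the paper does in the proof of Theorem~\ref{thmBothIntOPGenSeptedKernels} when deriving \eqref{CompABProofThmBothIntOpGenSeparatedKnDI} and \eqref{EqBFAProofThmBothIntOpGenSepratKnDI}. Your remark on the Fubini justification via the $L_p$/$L_q$ duality of the coefficient functions is also in line with the boundedness discussion preceding that theorem.
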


\section{Sequences of non commuting operators that converge to commuting operator}\label{SecRepreBothLISeqOfNonCommutOpConvCommuting}


In this section  we construct families of non-commuting operators $\{C_m\}$ or $\{D_m\}$ such that satisfy the commutation relation $CD=\delta DC^2$ and converge in norm to an operator $C$ or $D$ ($\|C_m-C\|_E\to 0$ or $\|D_m-D\|_E\to 0$, $m\to \infty$), respectively, such that $C$ and $D$ commute. We first present two lemmas which relate conditions of commutativity and convergence of sequence of operators. We assume that the reader is familiar with definitions of convergence of operator sequences and norm properties. For further reading about the norm definition, uniform convergence of operators and their basic properties, please read \cite{BrezisFASobolevSpaces,FollandRA,Kolmogorov,KolmogorovVol2}.

\begin{lemma}\label{LemmaOnseSequenceConvOptorsCommute}
Let $E$ be a normed vector space. Let  $C_m:\, E\to E$ for each positive integer be a bounded linear operators and
let $D:\, E\to E$ be a bounded linear operator. Suppose that the sequence $\{C_m\}$ converge in norm to
an operator $C$. Then, the sequence $\{C_mD-DC_m\}$ converge in norm to zero if and only if $CD=DC$.
\end{lemma}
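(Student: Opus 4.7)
The plan is to reduce the statement to the continuity of operator multiplication in the operator norm, namely the estimates
\[
\|C_m D - CD\|_E \le \|C_m - C\|_E \cdot \|D\|_E, \qquad \|D C_m - DC\|_E \le \|D\|_E \cdot \|C_m - C\|_E,
\]
which are immediate from submultiplicativity of the operator norm. Since $\|C_m - C\|_E \to 0$, both $C_m D \to CD$ and $D C_m \to DC$ in operator norm. Hence the sequence $\{C_m D - D C_m\}$ converges in norm to $CD - DC$.

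For the forward direction, I would assume $C_m D - D C_m \to 0$ in norm, observe by the previous paragraph that $C_m D - D C_m \to CD - DC$ in norm, and conclude by uniqueness of the limit that $CD - DC = 0$, i.e. $CD = DC$.

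For the reverse direction, I would assume $CD = DC$ and write the decomposition
\[
C_m D - D C_m = (C_m - C) D + D(C - C_m) + (CD - DC) = (C_m - C) D - D(C_m - C),
\]
so that by the triangle inequality and submultiplicativity,
\[
\|C_m D - D C_m\|_E \le 2 \|D\|_E \cdot \|C_m - C\|_E \xrightarrow[m\to\infty]{} 0,
\]
which is the desired conclusion.

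There is no genuine obstacle here; the only thing to watch is that all objects live in the Banach algebra of bounded linear operators on $E$ equipped with the operator norm, so that submultiplicativity applies and $CD$, $DC$ are well-defined bounded operators (the norm limit $C$ of a sequence of bounded operators is bounded). The entire argument is two short triangle-inequality computations built on the continuity of multiplication in operator norm.
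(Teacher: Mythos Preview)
Your proof is correct and follows essentially the same approach as the paper: both directions rest on the submultiplicativity estimate $\|C_mD-CD\|_E,\ \|DC_m-DC\|_E\le \|D\|_E\,\|C_m-C\|_E$ together with the triangle inequality. Your forward direction is phrased a bit more cleanly via uniqueness of limits (the paper instead bounds $\|CD-DC\|_E$ directly and takes an infimum over $m$), but the underlying computation is identical.
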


\begin{proof} \smartqed
\noindent Necessity (only if): Suppose that the sequence of bounded linear operators $\{C_mD-DC_m\}$ converge in norm to zero. Then, by applying linearity and norm properties, we have, for each positive integer $m$, the following:
  \begin{eqnarray*}
    \|CD-DC\|_E&=& \|CD-C_mD+C_mD-DC_m+DC_m-DC\|_E \\
    &=& \|(C-C_m)D+C_mD-DC_m+D(C_m-C)\|_E\\
    & \le &
    2\|C-C_m\|_E\cdot \|D\|_E  +\|C_mD-DC_m\|_E.
  \end{eqnarray*}
  Therefore,
  \begin{equation*}
    \|CD-DC\|_E \le \inf_{m\ge 1} \{2\|C-C_m\|_E\cdot \|D\|_E  +\|C_mD-DC_m\|_E\}=0,
  \end{equation*}
  since $\|C-C_m\|_E\to 0$ and $\|C_mD-C_mD\|_E\to 0$ when $m\to\infty$. Hence we conclude that $CD=DC$.

\noindent {Sufficiency (if):} Suppose that bounded linear operators  $C$ and $D$ commute. By using this, linearity of operators and norm properties,  we have
  the following:
  \begin{eqnarray*}
    \|C_mD-DC_m\|_E  &=&  \|C_mD-CD+CD-DC_m\|_E=\|(C_m-C)D+D(C-C_m)\|_E\\
    & \le & 2\|C_m-C\|_E\cdot \|D\|_E \to 0,\, m\to \infty.
  \end{eqnarray*}
\end{proof}

\begin{remark}
In Lemma \ref{LemmaOnseSequenceConvOptorsCommute},  the convergence of the sequence of bounded linear operators $\{C_mD-DC_m\}$ to zero does not imply that the sequence $\{C_m\}$ converge. For instance, one can take $D$ as identity or zero operator and $C_m$ is a sequence that does not converge but the sequences of norms is bounded. Another example is in the following case of linear operators in $\mathbb{R}^2$  given by matrices:
\begin{gather*}
  C_m=\left(\begin{array}{cc}
        (-1)^m & 0 \\
        0 & 1
      \end{array}\right),\ D=\left(\begin{array}{cc}
                               1 & 0 \\
                               0 & 0
                             \end{array}\right).
\end{gather*}
We have
$
  C_mD=DC_m=\left(\begin{array}{cc}
         (-1)^m & 0 \\
         0 & 0
       \end{array}\right),
$
and so $C_mD-DC_m = 0$, $m\to \infty$. But the sequence of matrices $C_m$ does not converge with respect to the matrix norm induced by the Euclidian norm since the sequence
$(-1)^m$ does not converge when $m\rightarrow \infty$.
\end{remark}

\begin{lemma}\label{LemmaTwoOpSequencesConvOptorsCommute}
Let $E$ be a normed vector space. Let  $C_m:\, E\to E$, $D_m:\, E\to E$ for each positive integer be bounded linear operators and
let $C:\, E\to E$, $D:\, E\to E$ be bounded linear operators. Suppose that the sequences $\{C_m\}$, $\{D_m\}$  converge in norm to
 operators $C$ and $D$, respectively. Then, the sequence $\{C_mD_m-D_mC_m\}$ converges in norm to zero if and only if $CD=DC$.
\end{lemma}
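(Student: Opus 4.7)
The plan is to mirror the proof of Lemma~\ref{LemmaOnseSequenceConvOptorsCommute}, adapted to the case where both operator sequences vary. The extra ingredient needed, which was not required in the one-sequence version, is that any norm-convergent sequence of bounded linear operators is norm-bounded: from $\|C_m - C\|_E \to 0$ we deduce that $\{\|C_m\|_E\}$ is a Cauchy, hence bounded, sequence in $\mathbb{R}$, and likewise for $\{\|D_m\|_E\}$. This uniform boundedness will absorb the cross terms produced by the telescoping.

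For the \emph{necessity} direction, assuming $\|C_m D_m - D_m C_m\|_E \to 0$, I would use the repeated add-and-subtract decomposition
\[
CD - DC = (C - C_m)D + C_m(D - D_m) + (C_m D_m - D_m C_m) + D_m(C_m - C) + (D_m - D)C,
\]
and then apply the triangle inequality together with submultiplicativity of the operator norm to obtain
\[
\|CD - DC\|_E \leq (\|D\|_E + \|D_m\|_E)\|C - C_m\|_E + (\|C_m\|_E + \|C\|_E)\|D - D_m\|_E + \|C_m D_m - D_m C_m\|_E.
\]
Since the right-hand side is valid for every $m$, taking the infimum (or letting $m \to \infty$) and using uniform boundedness of $\{\|C_m\|_E\}$ and $\{\|D_m\|_E\}$ forces $\|CD - DC\|_E = 0$, hence $CD = DC$.

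For the \emph{sufficiency} direction, assuming $CD = DC$, I would symmetrically telescope, inserting the vanishing difference $CD - DC$ in the middle, to get
\[
C_m D_m - D_m C_m = (C_m - C)D_m + C(D_m - D) + D(C - C_m) + (D - D_m)C_m,
\]
from which
\[
\|C_m D_m - D_m C_m\|_E \leq \|D_m\|_E\|C_m - C\|_E + \|C\|_E\|D_m - D\|_E + \|D\|_E\|C - C_m\|_E + \|C_m\|_E\|D - D_m\|_E.
\]
Uniform boundedness of $\{\|C_m\|_E\}$ and $\{\|D_m\|_E\}$ together with $\|C_m - C\|_E \to 0$ and $\|D_m - D\|_E \to 0$ then makes the right-hand side tend to zero.

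The only genuine obstacle beyond bookkeeping is the invocation of uniform boundedness for the variable sequence $\{D_m\}$ (since in Lemma~\ref{LemmaOnseSequenceConvOptorsCommute} only the constant $\|D\|_E$ appeared as a multiplier), and one must be slightly careful that $E$ being merely a normed space causes no issue: the limit operators $C$ and $D$ are given by hypothesis, so no completeness of the operator space is invoked.
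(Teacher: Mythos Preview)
Your proof is correct and uses essentially the same approach as the paper's: telescoping, submultiplicativity of the operator norm, and uniform boundedness of norm-convergent sequences. The paper organizes things slightly differently—for necessity it first shows $\|C_m D - D C_m\|_E \to 0$ and then invokes Lemma~\ref{LemmaOnseSequenceConvOptorsCommute}, and for sufficiency it first proves the auxiliary fact $D_m C_m \to DC$ before telescoping—whereas your direct five-term and four-term decompositions accomplish the same thing in one pass.
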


\begin{proof} \smartqed
\noindent \textbf{Necessity (only if):} Suppose that the sequence of bounded linear operators $\{C_mD_m-D_mC_m\}$  converge in norm to zero. Then, by applying linearity and norm properties, we have the following:
  \begin{eqnarray*}
    \|C_mD-DC_m\|_E&=& \|C_mD-C_mD_m+C_mD_m-DC_m+D_mC_m-D_mC_m\|_E \\
    &=& \|C_m(D-D_m)+(D_m-D)C_m+D_mC_m-D_mC_m\|_E\\
    & \le &
    2\|D-D_m\|_E\cdot \|C_m\|_E  +\|C_mD_m-D_mC_m\|_E\to 0,\, m\to\infty,
  \end{eqnarray*}
  since $\|C_m\|_E$ is bounded, $\|D-D_m\|_E\to 0$ and $\|C_mD_m-D_mC_m\|_E\to 0$ when $m\to\infty$.
  Therefore, by applying Lemma \ref{LemmaOnseSequenceConvOptorsCommute} we conclude that $CD=DC$.

\noindent \textbf{Sufficiency (If):} Suppose that bounded linear operators  $C$ and $D$ commute. We first prove that the sequence of bounded linear operator $\{D_mC_m\}$ converges in norm to $CD$. In fact,
  \begin{eqnarray*}
    \|D_mC_m-DC\|_E&\leq & \|D_mC_m-D_mC+D_mC-DC\|_E \\
    &\leq & \|D_m\|_E\cdot\|C_m-C\|_E+ \|D_m-D\|_E\|C\|_E\to 0,
  \end{eqnarray*}
  when $m\to+\infty$, since the sequence $\{\|D_m\|\}$ is bounded and $\{C_m\}$, $\{D_m\}$ converge in norm to $C$ and $D$, respectively.
   By using this, commutativity of operators $C$, $D$, linearity, norm properties and Lemma \ref{LemmaOnseSequenceConvOptorsCommute}, we have
  the following:
  \begin{eqnarray*}
    \|C_mD_m-D_mC_m\|_E  &=&  \|C_mD_m-C_mD+C_mD+DC-DC-D_mC_m\|_E\\
          &=&\|C_m(D_m-D)+(C_m-C)D+(DC-D_mC_n)\|_E \\
    & \leq & \|C_m\|_E\cdot\|D_m-D\|_E+\|C_m-C\|_E\cdot \|D\|_E \\
    &  & +\|DC-D_mC_m\|_E \to 0,
  \end{eqnarray*}
  when $m\to \infty$, since the sequence $\{\|C_m\|\}$ is bounded, the sequences $\{C_m\}$, $\{D_m\}$, $\{D_mC_m\}$ converge to $C$, $D$ and $DC$, respectively.
\QEDB
\end{proof}

The following proposition was proved in \cite{DjinjaEtAll_LinItOpInGenSepKern}. Further, in this section, we will apply this result to compute and study convergence of commutator.
\begin{proposition}[\cite{DjinjaEtAll_LinItOpInGenSepKern}]
 Let $A:L_p(\mathbb{R},\mu)\to L_p(\mathbb{R},\mu)$, $B:L_p(\mathbb{R},\mu)\to L_p(\mathbb{R},\mu)$, $1\le p\le \infty$ be operators defined as
  \begin{eqnarray}\nonumber
     (A x)(t)&=&\int\limits_{\alpha_1}^{\beta_1}I_{[\alpha,\beta]}(t)(\theta_{A,1}\sin(\omega t)\cos(\omega s)+\theta_{A,2}\cos(\omega t)\cos(\omega s)\\ \label{OpAFourerSeq}
     && +\theta_{A,3}\sin(\omega t)\sin(\omega s)+\theta_{A,4}\cos(\omega t)\sin(\omega s))x(s)d\mu_s,
   \\ \nonumber
      (Bx)(t)&=&  \int\limits_{\alpha_1}^{\beta_1}I_{[\alpha,\beta]}(t)(\theta_{B,1}\sin(\omega t)\cos(\omega s)+\theta_{B,2}\cos(\omega t)\cos(\omega s) \\ \nonumber
     &&  +\theta_{B,3}\sin(\omega t)\sin(\omega s)+\theta_{B,4}\cos(\omega t)\sin(\omega s)) x(s)d\mu_s,
  \end{eqnarray}
 for almost every $t$, where $\theta_{A,1}, \theta_{B,1}, \theta_{B,3},\, \omega \in\mathbb{R}$,  $\delta \in\mathbb{R}\setminus\{0\}$, $\alpha,\beta,\beta_1,\beta_1 \in\mathbb{R}$, $\alpha_1<\beta_1$, $\alpha\le \alpha_1$, $\beta\ge \beta_1$, $I_E(\cdot)$ is the indicator function of the set $E$, the number $\frac{\omega }{\pi}(\beta_1-\alpha_1)\in \mathbb{Z}$ or $\frac{\omega }{\pi}(\beta_1+\alpha_1)\in \mathbb{Z}$.
 Set
 \begin{align}\label{ConstantSigma1Case2Omega}
     \sigma_1  &=\int\limits_{\alpha_1}^{\beta_1} (\sin(\omega s))^2 d\mu_s=\left\{\begin{array}{cc}
                         0, & \mbox{ if } \omega=0 \\
                         \frac{\beta_1-\alpha_1}{2}-\frac{\cos(\omega(\alpha_1+\beta_1))\sin(\omega(\beta_1-\alpha_1))}{2\omega}, & \mbox{ if } \omega\not=0,
                       \end{array} \right.\\ \label{ConstantSigma2Case2Omega}
    \sigma_2 &=\int\limits_{\alpha_1}^{\beta_1} (\cos(\omega s))^2 d\mu_s=\beta_1-\alpha_1-\sigma_1\\ \nonumber
    &=\left\{\begin{array}{cc}
                         \beta_1-\alpha_1, & \mbox{ if } \omega=0 \\
\frac{\beta_1-\alpha_1}{2}+\frac{\cos(\omega(\alpha_1+\beta_1))\sin(\omega(\beta_1-\alpha_1))}{2\omega}, & \mbox{ if } \omega\not=0.
                       \end{array} \right.
        \end{align}
Then,
  \begin{eqnarray}\label{CommutatorABFourierSequenceKernelOmegaOrtog}
  &&  (AB-BA)x(t) \\ \nonumber
   &&\resizebox{0.98\hsize}{!}{$\displaystyle = \int\limits_{\alpha_1}^{\beta_1}I_{[\alpha,\beta]}(t)\big((\theta_{A,3}\theta_{B,1}\sigma_1-\theta_{B,3}\theta_{A,1}\sigma_1
   +\theta_{A,1}\theta_{B,2}\sigma_2-\theta_{B,1}\theta_{A,2}\sigma_2)\sin(\omega t)\cos(\omega s) $}\\ \nonumber
  && \resizebox{0.92\hsize}{!}{$\displaystyle + (\theta_{B,1}\theta_{A,4} - \theta_{A,1}\theta_{B,4})\sigma_1\cos(\omega t)\cos(\omega s) + (\theta_{A,1}\theta_{B,4} - \theta_{B,1}\theta_{A,4})\sigma_2\sin(\omega t)\sin(\omega s)$}\\
  \nonumber
&&  \resizebox{0.92\hsize}{!}{$\displaystyle  +(\theta_{A,4}\theta_{B,3}\sigma_1-\theta_{B,4}\theta_{A,3}\sigma_1
   +\theta_{A,2}\theta_{B,4}\sigma_2-\theta_{B,2}\theta_{A,4}\sigma_2)\cos(\omega t)\sin(\omega s)\big)x(s)d\mu_s
 $}
  \end{eqnarray}
 for almost every $t$. Moreover if $\sigma_1\not=0$, $\sigma_2\not=0$, then $AB=BA$ if and only if $\theta_{A,3}\theta_{B,1}\sigma_1-\theta_{B,3}\theta_{A,1}\sigma_1=\theta_{B,1}\theta_{A,2}\sigma_2-\theta_{A,1}\theta_{B,2}\sigma_2$,
  $\theta_{A,4}\theta_{B,1}=\theta_{B,4}\theta_{A,1}$ and $\theta_{A,4}\theta_{B,3}\sigma_1-\theta_{B,4}\theta_{A,3}\sigma_1=\theta_{B,2}\theta_{A,4}\sigma_2-\theta_{A,2}\theta_{B,4}\sigma_2$.
\end{proposition}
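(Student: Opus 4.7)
The plan is to apply Proposition~\ref{PropSimilarthmBothIntOpGensepKernellsCommutativityCommutator} directly, after rewriting $A$ and $B$ in its standard separable form. With $l_A=l_B=4$ and common integration domain $G=G_A=G_B=[\alpha_1,\beta_1]$, one identifies
\begin{align*}
a_1(t)&=a_3(t)=I_{[\alpha,\beta]}(t)\sin(\omega t), & a_2(t)&=a_4(t)=I_{[\alpha,\beta]}(t)\cos(\omega t),\\
c_1(s)&=\theta_{A,1}\cos(\omega s), & c_2(s)&=\theta_{A,2}\cos(\omega s),\\
c_3(s)&=\theta_{A,3}\sin(\omega s), & c_4(s)&=\theta_{A,4}\sin(\omega s),
\end{align*}
and analogous $b_j$, $e_j$ with $\theta_{B,j}$ in place of $\theta_{A,j}$. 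Since $[\alpha_1,\beta_1]\subseteq[\alpha,\beta]$, the indicator $I_{[\alpha,\beta]}$ equals $1$ on the pairing domain $G$, so it drops out of every $Q_G$-pairing.

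Next, I would compute the one nontrivial orthogonality integral
\[
\int_{\alpha_1}^{\beta_1}\sin(\omega s)\cos(\omega s)\,d\mu_s=\tfrac{1}{2\omega}\sin(\omega(\alpha_1+\beta_1))\sin(\omega(\beta_1-\alpha_1)),
\]
which vanishes under either hypothesis $\tfrac{\omega}{\pi}(\beta_1-\alpha_1)\in\mathbb{Z}$ or $\tfrac{\omega}{\pi}(\beta_1+\alpha_1)\in\mathbb{Z}$. Combined with the half-angle evaluations giving $\sigma_1$ and $\sigma_2$ from \eqref{ConstantSigma1Case2Omega}--\eqref{ConstantSigma2Case2Omega}, this reduces the $4\times 4$ matrices $[Q_G(b_k,c_m)]$ and $[Q_G(e_k,a_m)]$ to a $\{\sin,\cos\}$-block-diagonal shape whose only non-zero entries are multiples of $\theta_{A,\ast}\sigma_1$, $\theta_{A,\ast}\sigma_2$ (respectively $\theta_{B,\ast}\sigma_i$). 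Substituting these values into the two sums in Proposition~\ref{PropSimilarthmBothIntOpGensepKernellsCommutativityCommutator} for $AB$ and $BA$, and regrouping the surviving terms by the four basis products $\sin(\omega t)\cos(\omega s)$, $\cos(\omega t)\cos(\omega s)$, $\sin(\omega t)\sin(\omega s)$, $\cos(\omega t)\sin(\omega s)$, yields formula~\eqref{CommutatorABFourierSequenceKernelOmegaOrtog}.

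For the commutativity characterization when $\sigma_1\neq 0$ and $\sigma_2\neq 0$, the same orthogonality argument shows that the four basis products, viewed as functions on $[\alpha_1,\beta_1]^2$, are mutually $L_2$-orthogonal and hence linearly independent. Consequently, the kernel of $AB-BA$ vanishes $\mu\otimes\mu$-almost everywhere if and only if each of its four coefficients in \eqref{CommutatorABFourierSequenceKernelOmegaOrtog} is zero. The $\cos\cos$ and $\sin\sin$ coefficients both reduce to $(\theta_{B,1}\theta_{A,4}-\theta_{A,1}\theta_{B,4})\sigma_i=0$, collapsing to the single relation $\theta_{A,4}\theta_{B,1}=\theta_{B,4}\theta_{A,1}$, and together with the $\sin\cos$ and $\cos\sin$ conditions this gives exactly the three stated equations. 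The main obstacle is purely combinatorial: keeping track of the sixteen pairings and their signs without arithmetic slips; the block-diagonal observation above is the key simplification that confines non-zero pairings to two $2\times 2$ blocks and makes the final expansion routine.
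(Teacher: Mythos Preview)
Your approach is correct and is exactly the natural route: specialize Proposition~\ref{PropSimilarthmBothIntOpGensepKernellsCommutativityCommutator} to the four-term trigonometric kernel, exploit the vanishing of $\int_{\alpha_1}^{\beta_1}\sin(\omega s)\cos(\omega s)\,d\mu_s$ under the integer hypothesis to block-diagonalize the pairing matrices, and then read off the four coefficients. The paper does not supply its own proof of this proposition---it is imported verbatim from the cited reference \cite{DjinjaEtAll_LinItOpInGenSepKern}---but the proof there proceeds along precisely the lines you describe, so there is nothing to contrast.
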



\begin{remark}\label{RemarkConstantsEstimationOpFourierSeq}
Note that operator defined in \eqref{OpAFourerSeq} satisfy conditions of \cite[Theorem 6.18]{FollandRA}. In fact,
for almost every $t\in \mathbb{R}$ we have
\begin{multline*}
 \int\limits_{\alpha_1}^{\beta_1}I_{[\alpha,\beta]}(t)|\theta_{A,1}\sin(\omega t)\cos(\omega s)+\theta_{A,2}\cos(\omega t)\cos(\omega s)\\
      +\theta_{A,3}\sin(\omega t)\sin(\omega s)+\theta_{A,4}\cos(\omega t)\sin(\omega s)|d\mu_s \leq
     |\beta_1-\alpha_1|\sum_{i=1}^{4}|\theta_{A,i}|=\lambda_1
  \end{multline*}
  and for almost every $s\in [\alpha,\beta]$ we have
   \begin{align*}
  &\int\limits_{\mathbb{R}}I_{[\alpha,\beta]}(t)|\theta_{A,1}\sin(\omega t)\cos(\omega s)+\theta_{A,2}\cos(\omega t)\cos(\omega s)
\\
     &\hspace{5mm} +\theta_{A,3}\sin(\omega t)\sin(\omega s)+\theta_{A,4}\cos(\omega t)\sin(\omega s)|d\mu_t
     \\
      =& \int\limits_{\alpha}^{\beta}|\theta_{A,1}\sin(\omega t)\cos(\omega s)+\theta_{A,2}\cos(\omega t)\cos(\omega s)\\
     &\hspace{5mm} +\theta_{A,3}\sin(\omega t)\sin(\omega s)+\theta_{A,4}\cos(\omega t)\sin(\omega s)|d\mu_s
    \\
     \leq &
     |\beta-\alpha|\sum_{i=1}^{4}|\theta_{A,i}|=\lambda_2.
\end{align*}
Put $ \lambda=\max\{\lambda_1,\lambda_2\}<\infty$. By applying \cite[Theorem 6.18]{FollandRA}
one can conclude that $\|A\|_{L_p}\leq \lambda $, $1\leq p\leq\infty$.
\end{remark}

From now on we will present sequences of non-commuting operators that satisfy the monomial covariance commutation relation $CD=\delta DC^2$ which converge to commuting operators. These operators were constructed in \cite{DjinjaEtAll_LinItOpInGenSepKern} and here we study convergence of commutator when varying parameters.
\begin{theorem}\label{Proposition0ExampleTheoremIntOpRepGenKernLpConvSeqComOp}
 Let $A:L_p(\mathbb{R},\mu)\to L_p(\mathbb{R},\mu)$, $B:L_p(\mathbb{R},\mu)\to L_p(\mathbb{R},\mu)$, $1\le p\le \infty$ be operators defined as
  \begin{eqnarray*}
     (A x)(t)&=&\int\limits_{\alpha_1}^{\beta_1}I_{[\alpha,\beta]}(t)\theta_{A,1}\sin(\omega t)\cos(\omega s)x(s)d\mu_s,
   \\
      (Bx)(t)&=&  \int\limits_{\alpha_1}^{\beta_1}I_{[\alpha,\beta]}(t)\left(\theta_{B,1}\sin(\omega t)\cos(\omega s)+\theta_{B,3}\sin(\omega t)\sin(\omega s)\right) x(s)d\mu_s,
  \end{eqnarray*}
for almost every $t$, where $\theta_{A,1}, \theta_{B,1}, \theta_{B,3},\, \omega \in\mathbb{R}$,  $\delta \in\mathbb{R}\setminus\{0\}$, $\alpha,\beta,\alpha_1,\beta_1 \in\mathbb{R}$, $\alpha_1<\beta_1$, $\alpha\le \alpha_1$, $\beta\ge \beta_1$, $I_E(\cdot)$ is the indicator function of the set $E$ and, either the number $\frac{\omega }{\pi}(\beta_1-\alpha_1)\in \mathbb{Z}$ or $\frac{\omega }{\pi}(\beta_1+\alpha_1)\in \mathbb{Z}$,
 $\sigma_1,\sigma_2\in\mathbb{R}\setminus\{0\}$ are defined in \eqref{ConstantSigma1Case2Omega} and \eqref{ConstantSigma2Case2Omega}, respectively.
  Let $\{A_n\}: L_p(\mathbb{R},\mu)\to L_p(\mathbb{R},\mu)$, $\{B_n\}: L_p(\mathbb{R},\mu)\to L_p(\mathbb{R},\mu)$, with $1\le p \le \infty$ be sequences of operators
    \begin{eqnarray*}
     (A_n x)(t)&=&\int\limits_{\alpha_1}^{\beta_1}I_{[\alpha,\beta]}(t)\theta_{n}\sin(\omega t)\cos(\omega s)x(s)d\mu_s,
   \\
     (B_nx)(t)&=&  \int\limits_{\alpha_1}^{\beta_1}I_{[\alpha,\beta]}(t)\left(\theta_{B,1}\sin(\omega t)\cos(\omega s)+\varsigma_{n}\sin(\omega t)\sin(\omega s)\right) x(s)d\mu_s,
     \end{eqnarray*}
    for almost every $t$, $\{\theta_n\}$ and $\{\varsigma_n\}$ are number sequences. Let
         \begin{align*}
         & \Lambda=\big\{(\theta_{A,1},\theta_{B,1},\theta_{B,3}, \alpha_1,\beta_1,\alpha,\beta,\sigma_1,\sigma_2,\delta, \omega)\in\mathbb{R}^{11}: \delta\not=0, \alpha \leq \alpha_1, \beta\geq \beta_1,\\
         &  \int\limits_{\alpha_1}^{\beta_1}\sin(\omega s)\cos(\omega s)d\mu_s=0,  \sigma_1=\int\limits_{\alpha_1}^{\beta_1}(\sin(\omega s))^2d\mu_s\not=0,
         \sigma_2=\beta_1-\alpha_1-\sigma_1\not=0 \big\}.
         \end{align*}
         Then
    \begin{enumerate}[label=\textup{\arabic*.}, ref=\arabic*]
    \item\label{Proposition0ExampleTheoremIntOpRepGenKernLpConvSeqComOp:item1}
    $AB=\delta BA^2=0$, $AB=0$, $A_nB=\delta BA_n^2=0$, $AB_n=\delta B_nA^2=0$, $A_nB_n=\delta B_nA_n^2=0$ for each positive integer $n$.
        Moreover, it follows that for all $x\in L_p(\mathbb{R},\mu)$, $1\le p\le \infty$
       \begin{equation*}
       (AB-BA)x(t)=-(BAx)(t)=-\sigma_1\theta_{A,1}\theta_{B,3}
       \int\limits_{\alpha_1}^{\beta_1} I_{[\alpha,\beta]}(t)\sin(\omega t)\cos(\omega s) x(s)d\mu_s
       \end{equation*}
       for almost every $t$.
    \item\label{Proposition0ExampleTheoremIntOpRepGenKernLpConvSeqComOp:item2} if $\theta_n\to 0$ when $n\to \infty$, then $A_n\to 0$ (converge in norm) and so $A_nB-BA_n\to 0$ (converge in norm);
    \item\label{Proposition0ExampleTheoremIntOpRepGenKernLpConvSeqComOp:item3} if $\varsigma_n\to 0$ when $n\to \infty$, then $B_n\to \tilde{B}$ (converge in norm) defined as :
        \begin{equation*}
          (\tilde Bx)(t)= \int\limits_{\alpha_1}^{\beta_1}I_{[\alpha,\beta]}(t)\theta_{B,1}\sin(\omega t)\cos(\omega s)x(s)d\mu_s,
        \end{equation*}
        for almost every $t$. Moreover, $AB_n-B_nA\to 0$ (converge in norm) and $\tilde{B}$ commutes with $A$.
          \item\label{Proposition0ExampleTheoremIntOpRepGenKernLpConvSeqComOp:item4} if  $\theta_n\to 0$ and $\varsigma_n\to 0$ when $n\to \infty$, then $A_nB_n -B_nA_n \to 0$ (converge in norm).
            \item\label{Proposition0ExampleTheoremIntOpRepGenKernLpConvSeqComOp:item5} if $\theta_{B,3}   \theta_{A,1}\sigma_1 \tilde{\lambda}\to 0$ when
            $(\theta_{A,1},\theta_{B,1},\theta_{B,3},  \alpha_1,\beta_1,\alpha,\beta,\sigma_1,\sigma_2,\delta,\omega)\to \lambda_0 \in cl(\Lambda)$ (the closure of $\Lambda$), where
            $(\theta_{A,1},\theta_{B,1},\theta_{B,3},  \alpha_1,\beta_1,\alpha,\beta,\sigma_1,\sigma_2,\delta,\omega)\in \Lambda$ and, $\tilde{\lambda}=|\beta-\alpha|^{\frac{1}{p}}\cdot|\beta_1-\alpha_1|^{\frac{1}{q}}$, for $1< p < \infty$ with $\frac{1}{p}+\frac{1}{q}=1$, $\tilde{\lambda}=|\beta_1-\alpha_1|$, for $p = \infty$ and $\tilde{\lambda}=|\beta-\alpha|$, for $p = 1$, then for all $x\in L_p(\mathbb{R},\mu)$, $1\leq p\leq \infty$, it holds that $\|(AB-BA)x\|_{L_p}\to 0$. 
         \end{enumerate}
         \end{theorem}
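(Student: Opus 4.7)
The plan is to treat items \ref{Proposition0ExampleTheoremIntOpRepGenKernLpConvSeqComOp:item1}--\ref{Proposition0ExampleTheoremIntOpRepGenKernLpConvSeqComOp:item5} in turn. The common tools are Proposition \ref{PropSimilarthmBothIntOpGensepKernellsCommutativityCommutator} (closed-form products and commutators for separated-kernel operators), the separated-kernel H\"older estimate made explicit in the introduction to Section \ref{SecRepreBothLI} and in Remark \ref{RemarkConstantsEstimationOpFourierSeq}, and the convergence Lemmas \ref{LemmaOnseSequenceConvOptorsCommute} and \ref{LemmaTwoOpSequencesConvOptorsCommute}. The decisive algebraic fact, valid because either $\omega(\beta_1-\alpha_1)/\pi\in\mathbb{Z}$ or $\omega(\beta_1+\alpha_1)/\pi\in\mathbb{Z}$, is the orthogonality $\int_{\alpha_1}^{\beta_1}\sin(\omega s)\cos(\omega s)\,d\mu_s=0$, which one verifies via the product-to-sum identity $2\sin u\cos u=\sin 2u$.

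For item \ref{Proposition0ExampleTheoremIntOpRepGenKernLpConvSeqComOp:item1}, I apply Proposition \ref{PropSimilarthmBothIntOpGensepKernellsCommutativityCommutator} with $a_1(t)=I_{[\alpha,\beta]}(t)\theta_{A,1}\sin(\omega t)$, $c_1(s)=\cos(\omega s)$, $b_1(t)=I_{[\alpha,\beta]}(t)\theta_{B,1}\sin(\omega t)$, $e_1(s)=\cos(\omega s)$, $b_2(t)=I_{[\alpha,\beta]}(t)\theta_{B,3}\sin(\omega t)$, $e_2(s)=\sin(\omega s)$. Since $[\alpha_1,\beta_1]\subseteq[\alpha,\beta]$, the indicator is $1$ on $G=[\alpha_1,\beta_1]$, and the orthogonality forces $Q_G(b_1,c_1)=Q_G(b_2,c_1)=Q_{G_B}(e_1,a_1)=\gamma_{1,1}=0$; the only surviving pairing is $Q_{G_B}(e_2,a_1)=\theta_{A,1}\sigma_1$. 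Hence $AB=0$ and $A^2=0$, so $BA^2=0$ and $AB=\delta BA^2=0$. The commutator formula then drops out of the lone surviving term $b_2(t)\,Q_{G_B}(e_2,a_1)\,c_1(s)$ inside $BA$. The same template handles $A_nB,\,BA_n^2,\,AB_n,\,B_nA^2,\,A_nB_n,\,B_nA_n^2$ because only the multiplicative constants change.

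For items \ref{Proposition0ExampleTheoremIntOpRepGenKernLpConvSeqComOp:item2}--\ref{Proposition0ExampleTheoremIntOpRepGenKernLpConvSeqComOp:item4}, the separated-kernel H\"older bound $\|T\|_{L_p}\leq\sum_i\|a_i\|_p\|c_i\|_q$, together with $|\sin|,|\cos|\leq 1$, yields $\|A_n\|_{L_p}\leq|\theta_n|\tilde{\lambda}$ and $\|B_n-\tilde{B}\|_{L_p}\leq|\varsigma_n|\tilde{\lambda}$, with $\tilde{\lambda}$ as in the statement. Item \ref{Proposition0ExampleTheoremIntOpRepGenKernLpConvSeqComOp:item2} then follows from $\|A_nB-BA_n\|_{L_p}\leq 2\|A_n\|_{L_p}\|B\|_{L_p}\to 0$. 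For item \ref{Proposition0ExampleTheoremIntOpRepGenKernLpConvSeqComOp:item3}, a calculation parallel to item \ref{Proposition0ExampleTheoremIntOpRepGenKernLpConvSeqComOp:item1} shows $A\tilde{B}=\tilde{B}A=0$ (both reduce to $\int\sin(\omega s)\cos(\omega s)\,d\mu_s=0$), so Lemma \ref{LemmaOnseSequenceConvOptorsCommute} applied to $B_n\to\tilde{B}$ against the fixed $A$ gives $AB_n-B_nA\to 0$. For item \ref{Proposition0ExampleTheoremIntOpRepGenKernLpConvSeqComOp:item4}, the limits $0$ and $\tilde{B}$ commute trivially, so Lemma \ref{LemmaTwoOpSequencesConvOptorsCommute} yields $A_nB_n-B_nA_n\to 0$.

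Finally, for item \ref{Proposition0ExampleTheoremIntOpRepGenKernLpConvSeqComOp:item5}, item \ref{Proposition0ExampleTheoremIntOpRepGenKernLpConvSeqComOp:item1} exhibits $AB-BA$ as a single separated-kernel operator with kernel $-\theta_{A,1}\theta_{B,3}\sigma_1\, I_{[\alpha,\beta]}(t)\sin(\omega t)\cos(\omega s)$, so the same H\"older estimate gives $\|(AB-BA)x\|_{L_p}\leq|\theta_{A,1}\theta_{B,3}\sigma_1|\,\tilde{\lambda}\,\|x\|_{L_p}$, where the three listed forms of $\tilde{\lambda}$ precisely match $1<p<\infty$, $p=\infty$ and $p=1$. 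The hypothesis forces the right-hand side to zero. There is no substantive technical obstacle; the main work is the bookkeeping in item \ref{Proposition0ExampleTheoremIntOpRepGenKernLpConvSeqComOp:item1} and keeping the correct $p$-dependence of $\tilde{\lambda}$ at hand for item \ref{Proposition0ExampleTheoremIntOpRepGenKernLpConvSeqComOp:item5}.
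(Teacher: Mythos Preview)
Your proposal is correct and follows essentially the same route as the paper: direct computation via the separated-kernel pairings for item \ref{Proposition0ExampleTheoremIntOpRepGenKernLpConvSeqComOp:item1}, H\"older bounds $\|A_n\|\le|\theta_n|\tilde\lambda$ and $\|B_n-\tilde B\|\le|\varsigma_n|\tilde\lambda$ for the convergence items, and the single-term commutator estimate for item \ref{Proposition0ExampleTheoremIntOpRepGenKernLpConvSeqComOp:item5}. The only cosmetic differences are that the paper exploits the exact vanishing $A_nB=0$ and $A_nB_n=0$ to bound the commutators in items \ref{Proposition0ExampleTheoremIntOpRepGenKernLpConvSeqComOp:item2} and \ref{Proposition0ExampleTheoremIntOpRepGenKernLpConvSeqComOp:item4} by $\|BA_n\|$ and $\|B_nA_n\|$ directly, whereas you invoke the general triangle-inequality bound and Lemma \ref{LemmaTwoOpSequencesConvOptorsCommute}; both are valid and yield the same conclusions.
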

\begin{proof}
 \noindent\ref{Proposition0ExampleTheoremIntOpRepGenKernLpConvSeqComOp:item1}. It follows from direct computation.

 \noindent\ref{Proposition0ExampleTheoremIntOpRepGenKernLpConvSeqComOp:item2}. By applying H\"older inequality we  have for all $x\in L_p(\mathbb{R},\mu)$, $1< p<\infty $ the following estimations:
         \begin{align*}
          \|A_nx\|^p_{L_p}= & \int\limits_{\mathbb{R}} \big| \int\limits_{\alpha_1}^{\beta_1}I_{[\alpha,\beta]}(t)\theta_n\sin(\omega t)\cos(\omega s)x(s)d\mu_s\big|^pd\mu_t\\
          = &\int\limits_{\mathbb{R}} |I_{[\alpha,\beta]}(t)\theta_n\sin(\omega t)|^p d\mu_t \big| \int\limits_{\alpha_1}^{\beta_1}\cos(\omega s)x(s)d\mu_s\big|^p\\
         \leq & |\,\theta_n|^p |\beta-\alpha| \big|\int\limits_{\mathbb{R}} I_{[\alpha_1,\beta_1]}(s) \cos(\omega s)x(s)d\mu_s\big|^p 
         \\
         \leq  &|\theta_n |^p |\beta-\alpha| \big(\int\limits_{\mathbb{R}} |I_{[\alpha_1,\beta_1]}(s)\cos(\omega s)|^qd\mu_s\big)^{p/q}\cdot \int\limits_{\mathbb{R}} |x(s)|^p d\mu_s
        \\
         \leq & |\theta_n |^p |\beta-\alpha|\cdot|\beta_1-\alpha_1|^{p/q}\|x\|^p_{L_p}.
         \end{align*}
         Therefore, for $1< p<\infty $ we have the following:
         \begin{equation*}
          \|A_n\|_{L_p}\le  \left|\theta_n\right|\cdot|\beta-\alpha|^{\frac{1}{p}}\cdot|\beta_1-\alpha_1|^{\frac{1}{q}}\to 0,
         \end{equation*}
         when $n\to \infty$, since $\theta_n\to 0$. By applying H\"older inequality we have for all $x\in L_\infty(\mathbb{R},\mu)$ the following:
         \begin{eqnarray*}
         && \|A_n x\|_{L_\infty}=  \mathop{\esssup}_{t\in \mathbb{R}} \big|\ \int\limits_{\alpha_1}^{\beta_1}I_{[\alpha,\beta]}(t) \theta_n \sin(\omega t)\cos(\omega s)x(s)d\mu_s\big|\\
         &&\leq|\theta_n|\cdot \int\limits_{\mathbb{R}} \big|I_{[\alpha_1,\beta_1]}(s) \cos(\omega s)\big|d\mu_s \|x\|_{L_\infty} \le |\theta_n||\beta_1-\alpha_1|\|x\|_{L_\infty}.
         \end{eqnarray*}
          Therefore, for $p=\infty $ we have the following:
         \begin{equation*}
          \|A_n\|_{L_\infty}\le  |\theta_n |\cdot|\beta_1-\alpha_1|\to 0,
         \end{equation*}
         when $n\to\infty$ since $\theta_n\to 0$.
          If $p=1$ we have for all $x\in L_1(\mathbb{R},\mu)$ the following:
         \begin{align*}
          \|A_n x\|_{L_1}=& \int\limits_{\mathbb{R}} \big|\ \int\limits_{\alpha_1}^{\beta_1}I_{[\alpha,\beta]}(t) \theta_n \sin(\omega t)\cos(\omega s)x(s)d\mu_s\big|d\mu_t\\
         \leq &|\theta_n| \int\limits_{\mathbb{R}} |I_{[\alpha,\beta]}(t)\sin(\omega t)|d\mu_t   \int\limits_{\alpha_1}^{\beta_1}\left| \cos(\omega s)x(s)\right|d\mu_s
         \\
         \le & |\theta_n|\cdot |\beta-\alpha|\cdot\|x\|_{L_1}.
         \end{align*}
         Therefore, for $p=1 $ we have the following:
         \begin{equation*}
          \|A_n\|_{L_1}\le  |\theta_n |\cdot|\beta-\alpha|\to 0,
         \end{equation*}
         when $n\to\infty$ since $\theta_n\to 0$. Therefore, $A_n\to 0$ converge in norm in $L_p$, $1\le p\le \infty$.

         Now we prove the convergence for the sequence $\{A_nB-BA_n\}$. By noticing that $AB_n=0$, using item \ref{Proposition0ExampleTheoremIntOpRepGenKernLpConvSeqComOp:item2}),  norm properties and above result we have for each  $1\le p \le \infty,$
         \begin{equation*}
           \|A_nB-BA_n\|_{L_p}=\|BA_n\|_{L_p}\le \|B\|_{L_p}\cdot \|A_n\|_{L_p}\to 0,
         \end{equation*}
         when $n\to \infty$, since $B$ is a bounded linear operator in $L_p$  and $\|A_n\|\to 0$.

         \noindent\ref{Proposition0ExampleTheoremIntOpRepGenKernLpConvSeqComOp:item3}.
         We have
         \begin{eqnarray*}
           (B_n-\tilde B)x(t)&=&\int\limits_{\alpha_1}^{\beta_1}I_{[\alpha_1,\beta_1]}(t)\varsigma_n \sin(\omega t)\sin(\omega s)x(s)d\mu_s,
         \end{eqnarray*}
         almost everywhere. Therefore, by applying the same procedure as on item \ref{Proposition0ExampleTheoremIntOpRepGenKernLpConvSeqComOp:item2} for $1< p<\infty $ we have the following:
         \begin{equation*}
          \|B_n-\tilde B\|_{L_p}\le  \left|\varsigma_n\right|\cdot|\beta-\alpha|^{\frac{1}{p}}\cdot|\beta_1-\alpha_1|^{\frac{1}{q}}\to 0,
         \end{equation*}
         when $n\to \infty$, since $\varsigma_n\to 0$. For all $x\in L_\infty(\mathbb{R},\mu)$ we have the following:
        \begin{equation*}
          \|B_n-\tilde B\|_{L_\infty}\le  \left|\varsigma_n\right|\cdot|\beta_1-\alpha_1|\to 0,
         \end{equation*}
         when $n\to\infty$ since $\varsigma_n\to 0$. Moreover, for all $x\in L_1(\mathbb{R},\mu)$ we have the following:
        \begin{equation*}
          \|B_n-\tilde B\|_{L_1}\le  \left|\varsigma_n\right|\cdot|\beta-\alpha|\to 0,
         \end{equation*}
         when $n\to\infty$ since $\varsigma_n\to 0$. Therefore, $B_n\to\tilde B$ converge in norm in $L_p$, $1\le p\le \infty$.

         Now we prove the convergence for the sequence $\{AB_n-B_nA\}$. By direct computation we have
          $A\tilde B=\tilde B A=0$, in fact:
          \begin{eqnarray*}
            &&(A\tilde{B}x)(t)=\int\limits_{\alpha_1}^{\beta_1} I_{[\alpha,\beta]}(t)\theta_{A,1}\sin(\omega t)\cos(\omega s)\\
            && \hspace{3cm} \cdot\big(\int\limits_{\alpha_1}^{\beta_1}I_{[\alpha,\beta]}(s)\theta_{B,1}\sin(\omega s)\cos(\omega \tau)
            x(\tau)d\mu_\tau \big) d\mu_s
            \\
            &&
            =\theta_{A,1}\theta_{B,1}\int\limits_{\alpha_1}^{\beta_1} I_{[\alpha,\beta]}(t)\sin(\omega t)\big(\int\limits_{\alpha_1}^{\beta_1}\sin(\omega s)\cos(\omega s)d\mu_s \big)
            \cos(\omega \tau)x(\tau)d\mu_\tau=0,\\
            &&(\tilde{B}Ax)(t)=\int\limits_{\alpha_1}^{\beta_1} I_{[\alpha,\beta]}(t)\theta_{B,1}\sin(\omega t)\cos(\omega s)\\
            &&\hspace{2cm} \cdot\big(\int\limits_{\alpha_1}^{\beta_1}I_{[\alpha,\beta]}(s)\theta_{A,1}\sin(\omega s)\cos(\omega \tau)
             x(\tau)d\mu_\tau \big) d\mu_s
             \\
             &&
            =\theta_{A,1}\theta_{B,1}\int\limits_{\alpha_1}^{\beta_1} I{[\alpha,\beta]}(t)\sin(\omega t)\big(\int\limits_{\alpha_1}^{\beta_1}\sin(\omega s)\cos(\omega s)d\mu_s \big)
             \cos(\omega \tau)x(\tau)d\mu_\tau=0,
          \end{eqnarray*}
          for almost every $t$, because $\int\limits_{\alpha_1}^{\beta_1} \cos(\omega s)\sin(\omega s)d\mu_s=0$.
          Since $B_n\to \tilde{B}$, by Lemma \ref{LemmaOnseSequenceConvOptorsCommute}, we conclude that $\{AB_n-B_nA\}$ converges (converge in norm) to 0.

         \noindent\ref{Proposition0ExampleTheoremIntOpRepGenKernLpConvSeqComOp:item4}. By noticing that $A_nB_n=0$, using results from items
         \ref{Proposition0ExampleTheoremIntOpRepGenKernLpConvSeqComOp:item2}, \ref{Proposition0ExampleTheoremIntOpRepGenKernLpConvSeqComOp:item3} and norms properties we have for all $1\leq p\leq \infty$ the following:
         \begin{equation*}
           \|A_nB_n-B_nA_n\|_{L_p}= \|B_nA_n\|_{L_p}
           \leq  \|B_n\|_{L_p}\|A_n\|_{L_p}\to 0,
         \end{equation*}
         when $n\to\infty$, since $\|B_n\|_{L_p}$ is bounded (because $B_n$ converges in norm to $\tilde{B}$) and $\|A_n\|\to 0$.

        \noindent\ref{Proposition0ExampleTheoremIntOpRepGenKernLpConvSeqComOp:item5}. By following the same procedure of the item \ref{Proposition0ExampleTheoremIntOpRepGenKernLpConvSeqComOp:item2}, the operator $AB-BA$ has the following estimation. For all $x\in L_p(\mathbb{R},\mu)$ and for all 
        $$(\theta_{A,1},\theta_{B,1},\theta_{B,3},  \alpha_1,\beta_1,\alpha,\beta,\sigma_1,\sigma_2,\delta,\omega)\in \Lambda
        $$
        we have
         \begin{equation*}
           \|(AB-BA)x\|_{L_p}\leq |\theta_{B,3}\theta_{A,1}\sigma_1|\cdot \tilde{\lambda} \cdot \|x\|_{L_p}\to 0,
         \end{equation*}
           when   $\theta_{B,3}\theta_{A,1}\sigma_1\tilde{\lambda} \to 0$, where $\tilde{\lambda}=|\beta-\alpha|^{\frac{1}{p}}\cdot|\beta_1-\alpha_1|^{\frac{1}{q}}$, if $1< p < \infty$, $\tilde{\lambda}=|\beta_1-\alpha_1|$, if $p=\infty$ and $\tilde{\lambda}=|\beta-\alpha|$, if $p=1$. 
          \QEDB
\end{proof}

\begin{theorem}
 Let $A:L_p(\mathbb{R},\mu)\to L_p(\mathbb{R},\mu)$, $B:L_p(\mathbb{R},\mu)\to L_p(\mathbb{R},\mu)$, $1\le p\le \infty$ be operators defined as
  \begin{eqnarray*}
     (Ax)(t)&=&\int\limits_{\alpha_1}^{\beta_1}I_{[\alpha,\beta]}(t)\big(\theta_{A,1}\sin(\omega t)\cos(\omega s)+ \theta_{A,2}\cos(\omega t)\cos(\omega s) \\
          & & \hspace{1cm}+\frac{1}{\delta\sigma_1}\sin(\omega t)\sin(\omega s)\big)x(s)d\mu_s,
   \\
      (Bx)(t)&=&\int\limits_{\alpha_1}^{\beta_1}I_{[\alpha,\beta]}(t)\big(\theta_{B,1}\sin(\omega t)\cos(\omega s) \\
         && \hspace{1cm} -\frac{\theta_{B,1}(\delta\sigma_2\theta_{A,2}-1)}{\delta\theta_{A,1}\sigma_1}\sin(\omega t)\sin(\omega s)\big)
         x(s)d\mu_s,
  \end{eqnarray*}
 for almost every $t$, where $ \theta_{B,1}, \theta_{B,3},\,\omega \in\mathbb{R}$,  $\theta_{A,1},\delta\in\mathbb{R}\setminus\{0\}$, $\alpha,\beta,\alpha_1,\beta_1 \in\mathbb{R}$, $\alpha_1<\beta_1$, $\alpha\le \alpha_1$, $\beta\ge \beta_1$, $I_E(\cdot)$ is the indicator function of the set $E$ and, either the number $\frac{\omega }{\pi}(\beta_1-\alpha_1)\in \mathbb{Z}$ or $\frac{\omega }{\pi}(\beta_1+\alpha_1)\in \mathbb{Z}$, $\sigma_1,\sigma_2\in\mathbb{R}\setminus\{0\}$ are defined in \eqref{ConstantSigma1Case2Omega} and \eqref{ConstantSigma2Case2Omega}, respectively.
  Let $\{A_n\}: L_p(\mathbb{R},\mu)\to L_p(\mathbb{R},\mu)$, $\{B_n\}: L_p(\mathbb{R},\mu)\to L_p(\mathbb{R},\mu)$, with $1\le p \le \infty$ be sequences of operators
    \begin{eqnarray*}
     (A_nx)(t)&=&\int\limits_{\alpha_1}^{\beta_1}I_{[\alpha,\beta]}(t)\big(\theta_{A,1}\sin(\omega t)\cos(\omega s)+ \theta_{n}\cos(\omega t)\cos(\omega s) \\
          & & \hspace{1cm}+\frac{1}{\delta\sigma_1}\sin(\omega t)\sin(\omega s)\big)x(s)d\mu_s,
   \\
      (B_nx)(t)&=&\int\limits_{\alpha_1}^{\beta_1}I_{[\alpha,\beta]}(t)\big(\varsigma_{n}\sin(\omega t)\cos(\omega s) \\
         &&\hspace{1cm}  -\frac{\varsigma_{n}(\delta\sigma_2\theta_{A,2}-1)}{\delta\theta_{A,1}\sigma_1}\sin(\omega t)\sin(\omega s)\big)
         x(s)d\mu_s,
     \end{eqnarray*}
    for almost every $t$, $\{\theta_n\}$ and $\{\varsigma_n\}$ are number sequences. Let
         \begin{align*}
          & \Lambda =\big\{(\theta_{A,1},\theta_{A,2},\theta_{B,1}, \alpha_1,\beta_1,\alpha,\beta,\sigma_1,\sigma_2,\delta,\omega)\in\mathbb{R}^{11}: \theta_{A,1}\not=0, \delta\not=0,\alpha \leq \alpha_1, \beta\geq \beta_1,\\
         &  \int\limits_{\alpha_1}^{\beta_1}\sin(\omega s)\cos(\omega s)d\mu_s=0,   \sigma_1=\int\limits_{\alpha_1}^{\beta_1}(\sin(\omega s))^2d\mu_s\neq0,
         \sigma_2=\beta_1-\alpha_1-\sigma_1\neq0 \big\}.
         \end{align*}
         Then
    \begin{enumerate}[label=\textup{\arabic*.}, ref=\arabic*]
    \item\label{Proposition1ExampleTheoremIntOpRepGenKernLpConvSeqComOp:item1}  $AB=\delta BA^2$, $A_nB=\delta BA_n^2$, $AB_n=\delta B_nA^2 $ for each positive integer $n$. Moreover, for all $x\in L_p(\mathbb{R},\mu)$, $1\le p\le \infty$ we have
       \begin{multline*}
         (AB-BA)x(t)=\frac{\theta_{B,1}(\delta \theta_{A,2}\sigma_2-1)}{\delta^2 \theta_{A,1}\sigma_1}\int\limits_{\alpha_1}^{\beta_1} I_{[\alpha,\beta]}(t)\big(\sin(\omega t)\cos(\omega s)
         \\
         -\sin(\omega t)\sin(\omega s)\big)x(s)d\mu_s,
       \end{multline*}
      for almost every $t$.
           \item\label{Proposition1ExampleTheoremIntOpRepGenKernLpConvSeqComOp:item2} if $\theta_n\to \frac{1}{\delta \sigma_2}$ when $n\to \infty$, then $A_n\to \tilde{A}$ (converges in norm)
               defined as
            \begin{multline*}
             (\tilde A x)(t)=\int\limits_{\alpha_1}^{\beta_1}I_{[\alpha,\beta]}(t)\big(\theta_{A,1}\sin(\omega t)\cos(\omega s)+ \frac{1}{\delta\sigma_2}\cos(\omega t)\cos(\omega s) \\
           +\frac{1}{\delta\sigma_1}\sin(\omega t)\sin(\omega s)\big)x(s)d\mu_s,
            \end{multline*}
           for almost every $t$, so $A_nB-BA_n\to 0$ (converges in norm) and $\tilde{A}B=B\tilde{A}$;
          \item\label{Proposition1ExampleTheoremIntOpRepGenKernLpConvSeqComOp:item3} if $\varsigma_n\to 0$ when $n\to \infty$, then $B_n\to 0$ (converges in norm) and $AB_n-B_nA\to 0$ (converges in norm).

          \item\label{Proposition1ExampleTheoremIntOpRepGenKernLpConvSeqComOp:item4} if $(\frac{\theta_{B,1}(\delta \theta_{A,2}\sigma_2-1)}{\delta^2 \theta_{A,1}\sigma_1})\tilde{\lambda}\to 0$ when
            $$(\theta_{A,1},\theta_{A,2},\theta_{B,2},  \alpha_1,\beta_1,\alpha,\beta,\sigma_1,\sigma_2,\delta,\omega)\to \lambda_0\in cl(\Lambda) \mbox{ ( the closure of } \Lambda),
            $$
            where
            $(\theta_{A,1},\theta_{A,2},\theta_{B,2},  \alpha_1,\beta_1,\alpha,\beta,\sigma_1,\sigma_2,\delta,\omega)\in\Lambda$  and, $\tilde{\lambda}=2|\beta-\alpha|$ for $p=1$,  $\tilde{\lambda}=2|\beta-\alpha|^{\frac{1}{p}}|\beta_1-\alpha_1|^{\frac{1}{q}}$ for $1<p <\infty$ and  $\tilde{\lambda}=2|\beta_1-\alpha_1|$ for $p=\infty$,  then for all $x\in L_p(\mathbb{R},\mu)$, $1\leq p\leq \infty$, it holds that $\|(AB-BA)x\|_{L_p}\to 0$. 
         \end{enumerate}
         \end{theorem}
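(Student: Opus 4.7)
The proof parallels Theorem \ref{Proposition0ExampleTheoremIntOpRepGenKernLpConvSeqComOp} and splits naturally into the four items.

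For item \ref{Proposition1ExampleTheoremIntOpRepGenKernLpConvSeqComOp:item1}, the plan is to verify $AB=\delta BA^2$ by applying Corollary \ref{corDiedroRelBothIntOPGenSeptedKernels} directly. Writing the separable kernel of $A$ in the decomposition $\sin(\omega t)\bigl[\theta_{A,1}\cos(\omega s)+\tfrac{1}{\delta\sigma_1}\sin(\omega s)\bigr]+\cos(\omega t)\bigl[\theta_{A,2}\cos(\omega s)\bigr]$, and exploiting $\int_{\alpha_1}^{\beta_1}\sin(\omega s)\cos(\omega s)d\mu_s=0$ (forced by $\tfrac{\omega}{\pi}(\beta_1\pm\alpha_1)\in\mathbb{Z}$) together with $\int_{\alpha_1}^{\beta_1}\sin^2(\omega s)d\mu_s=\sigma_1$ and $\int_{\alpha_1}^{\beta_1}\cos^2(\omega s)d\mu_s=\sigma_2$, the multi-index sums $\gamma_{i_1,\dots,i_m}$ collapse to monomials in $\sigma_1,\sigma_2$. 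The three pointwise conditions of the corollary then reduce to a single scalar equation whose unique solution for the $\sin(\omega t)\sin(\omega s)$-coefficient of $B$ is precisely the formula $\theta_{B,3}=-\theta_{B,1}(\delta\sigma_2\theta_{A,2}-1)/(\delta\theta_{A,1}\sigma_1)$ built into the hypothesis. The same scalar identities hold verbatim for the pairs $(A_n,B)$ and $(A,B_n)$, so $A_nB=\delta BA_n^2$ and $AB_n=\delta B_nA^2$ come for free, and the closed form of $(AB-BA)x$ follows by substituting the chosen $\theta$-values into \eqref{CommutatorABFourierSequenceKernelOmegaOrtog}.

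For items \ref{Proposition1ExampleTheoremIntOpRepGenKernLpConvSeqComOp:item2} and \ref{Proposition1ExampleTheoremIntOpRepGenKernLpConvSeqComOp:item3} I would proceed exactly as in items \ref{Proposition0ExampleTheoremIntOpRepGenKernLpConvSeqComOp:item2} and \ref{Proposition0ExampleTheoremIntOpRepGenKernLpConvSeqComOp:item3} of Theorem \ref{Proposition0ExampleTheoremIntOpRepGenKernLpConvSeqComOp}. In item \ref{Proposition1ExampleTheoremIntOpRepGenKernLpConvSeqComOp:item2} the kernel of $A_n-\tilde{A}$ is $I_{[\alpha,\beta]}(t)\bigl(\theta_n-\tfrac{1}{\delta\sigma_2}\bigr)\cos(\omega t)\cos(\omega s)$, so a threefold H\"older estimate treating the cases $p=1$, $1<p<\infty$, $p=\infty$ separately gives $\|A_n-\tilde{A}\|_{L_p}\le\bigl|\theta_n-\tfrac{1}{\delta\sigma_2}\bigr|\,\tilde{\lambda}\to 0$, where $\tilde{\lambda}$ is the $p$-dependent constant of that item. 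Commutativity $\tilde{A}B=B\tilde{A}$ is then read off by plugging $\theta_{A,2}=\tfrac{1}{\delta\sigma_2}$ into \eqref{CommutatorABFourierSequenceKernelOmegaOrtog}; each of the four scalar coefficients vanishes, and Lemma \ref{LemmaOnseSequenceConvOptorsCommute} delivers $A_nB-BA_n\to 0$ in norm. Item \ref{Proposition1ExampleTheoremIntOpRepGenKernLpConvSeqComOp:item3} is simpler because the entire kernel of $B_n$ carries the factor $\varsigma_n$, so the same H\"older estimate gives $\|B_n\|_{L_p}\le C|\varsigma_n|\to 0$ with $C$ depending only on the fixed parameters, and sub-multiplicativity of the operator norm yields $\|AB_n-B_nA\|_{L_p}\le 2\|A\|_{L_p}\|B_n\|_{L_p}\to 0$.

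Item \ref{Proposition1ExampleTheoremIntOpRepGenKernLpConvSeqComOp:item4} is a direct H\"older estimate on the commutator expression obtained in item \ref{Proposition1ExampleTheoremIntOpRepGenKernLpConvSeqComOp:item1}: since the two separable summands share the common scalar prefactor $\theta_{B,1}(\delta\theta_{A,2}\sigma_2-1)/(\delta^2\theta_{A,1}\sigma_1)$, one arrives at a bound of the form $\|(AB-BA)x\|_{L_p}\le\bigl|\theta_{B,1}(\delta\theta_{A,2}\sigma_2-1)/(\delta^2\theta_{A,1}\sigma_1)\bigr|\,\tilde{\lambda}\,\|x\|_{L_p}$, which tends to zero along the parameter path by hypothesis; the factor $2$ in $\tilde{\lambda}$ appears because of the two summands. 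The main bookkeeping obstacle lies in item \ref{Proposition1ExampleTheoremIntOpRepGenKernLpConvSeqComOp:item1}, where the multi-index sums $\gamma_{i_1,\dots,i_m}$ must be reduced cleanly so as to both extract the covariance identity $AB=\delta BA^2$ and simultaneously read off the commutator's scalar coefficients without expanding the whole relation; once those algebraic identities are in hand, the convergence arguments of items \ref{Proposition1ExampleTheoremIntOpRepGenKernLpConvSeqComOp:item2}--\ref{Proposition1ExampleTheoremIntOpRepGenKernLpConvSeqComOp:item4} follow the template already established in Theorem \ref{Proposition0ExampleTheoremIntOpRepGenKernLpConvSeqComOp}.
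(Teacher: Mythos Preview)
Your proposal is correct and follows essentially the same approach as the paper: item~1 by direct computation via the separable-kernel machinery and the commutator formula \eqref{CommutatorABFourierSequenceKernelOmegaOrtog}, items~2--3 by H\"older estimates on the difference kernels, and item~4 by a H\"older bound on the explicit commutator. The only minor deviation is that in item~2 you verify $\tilde{A}B=B\tilde{A}$ first and then invoke the ``if'' direction of Lemma~\ref{LemmaOnseSequenceConvOptorsCommute} to get $A_nB-BA_n\to 0$, whereas the paper estimates $\|A_nB-BA_n\|_{L_p}$ directly and then uses the ``only if'' direction to deduce commutativity; both orderings are valid.
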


\begin{proof}
\noindent\ref{Proposition1ExampleTheoremIntOpRepGenKernLpConvSeqComOp:item1}. It follows from direct computation.

\noindent\ref{Proposition1ExampleTheoremIntOpRepGenKernLpConvSeqComOp:item2}.  We have
         \begin{eqnarray*}
           (A_n-\tilde A)x(t)&=&\int\limits_{\alpha_1}^{\beta_1}I_{[\alpha_1,\beta_1]}(t)\left(\theta_n-\frac{1}{\delta \sigma_2}\right) \cos(\omega t)\cos(\omega s)x(s)d\mu_s,
         \end{eqnarray*}
         for almost every $t$. Moreover, by applying H\"older inequality,  we  have for all $x\in L_p(\mathbb{R},\mu)$, $1< p<\infty $, the following estimations:
         \begin{eqnarray*}
         && \resizebox{0.85\hsize}{!}{$\|(A_n-\tilde A)x\|^p_{L_p}=  \int\limits_{\mathbb{R}} \big| \int\limits_{\alpha_1}^{\beta_1}I_{[\alpha,\beta]}(t)\left(\theta_n-\frac{1}{\delta \sigma_2}\right)\cos(\omega t)\cos(\omega s)x(s)d\mu_s\big|^pd\mu_t$}\\
         &&\resizebox{0.982\hsize}{!}{$\displaystyle \leq \big|\theta_n-\frac{1}{\delta \sigma_2}\big|^p |\beta-\alpha| \big|\int\limits_{\mathbb{R}} I_{[\alpha_1,\beta_1]}(s) \cos(\omega s)x(s)d\mu_s\big|^p
         \leq\big|\theta_n-\frac{1}{\delta \sigma_2}\big|^p |\beta-\alpha||\beta_1-\alpha_1|^{\frac{p}{q}}\|x\|^p_{L_p}.$}
         \end{eqnarray*}
         Therefore, for $1< p<\infty $ we have the following:
         \begin{equation*}
          \|A_n-\tilde A\|_{L_p}\le  \big|\theta_n-\frac{1}{\delta \sigma_2}\big|\cdot|\beta-\alpha|^{\frac{1}{p}}\cdot|\beta_1-\alpha_1|^{\frac{1}{q}}\to 0,
         \end{equation*}
         when $n\to \infty$, since $\theta_n\to \frac{1}{\delta \sigma_2}$. By applying H\"older inequality we have for all $x\in L_\infty(\mathbb{R},\mu)$ the following:
         \begin{eqnarray*}
         && \|(A_n-\tilde A) x\|_{L_\infty}= \mathop{\esssup}_{t\in \mathbb{R}} \big|\ \int\limits_{\alpha_1}^{\beta_1}I_{[\alpha,\beta]}(t)\left(\theta_n-\frac{1}{\delta \sigma_2}\right) \cos(\omega t)\cos(\omega s)x(s)d\mu_s\big|\\
         &&\leq\big|\theta_n-\frac{1}{\delta \sigma_2}\big|\cdot \int\limits_{\mathbb{R}} \big|I_{[\alpha_1,\beta_1]}(s) \cos(\omega s)\big|d\mu_s\cdot \|x\|_{L_\infty} \le \big|\theta_n-\frac{1}{\delta \sigma_2}\big|\cdot|\beta_1-\alpha_1|\cdot\|x\|_{L_\infty}.
         \end{eqnarray*}
          Therefore, for $p=\infty $ we have the following:
         \begin{equation*}
          \|A_n-\tilde A\|_{L_\infty}\le  \big|\theta_n-\frac{1}{\delta \sigma_2}\big|\cdot|\beta_1-\alpha_1|\to 0,
         \end{equation*}
         when $n\to\infty$ since $\theta_n\to \frac{1}{\delta \sigma_2}$.
         We have for all $x\in L_1(\mathbb{R},\mu)$ the following:
         \begin{align*}
        &  \|(A_n-\tilde A) x\|_{L_1}= \int\limits_{\mathbb{R}} \big|\ \int\limits_{\alpha_1}^{\beta_1}I_{[\alpha,\beta]}(t)\left(\theta_n-\frac{1}{\delta \sigma_2}\right) \cos(\omega t)\cos(\omega s)x(s)d\mu_s\big|d\mu_t\\
        & \leq  \big|\theta_n-\frac{1}{\delta \sigma_2}\big|\int\limits_{\mathbb{R}} |I_{[\alpha,\beta]}(t)\cos(\omega t)|d\mu_t\cdot \int\limits_{\mathbb{R}} \big|I_{[\alpha_1,\beta_1]}(s) \cos(\omega s)x(s) \big|d\mu_s
         \\
        & \leq  \big|\theta_n-\frac{1}{\delta \sigma_2}\big|\cdot|\beta-\alpha|\cdot\|x\|_{L_1}.
         \end{align*}
         Therefore, for $p=1 $ we have the following:
         \begin{equation*}
          \|A_n-\tilde A\|_{L_1}\le  \big|\theta_n-\frac{1}{\delta \sigma_2}\big|\cdot|\beta_1-\alpha_1|\to 0,
         \end{equation*}
         when $n\to\infty$ since $\theta_n\to \frac{1}{\delta \sigma_2}$.
         Therefore, $A_n\to\tilde A$ converges in norm in $L_p$, with $1\le p\le \infty$.

         Now we prove the convergence for the sequence $\{A_nB-BA_n\}$. By applying H\"older inequality we have for all $x\in L_p(\mathbb{R},\mu)$, $1< p<\infty $ the following:
         \begin{align*}
          & \|(A_nB-BA_n)x\|^p_{L_p}= \int\limits_{\mathbb{R}}\big|\left(\frac{\theta_{B,1}(\delta\sigma_2\theta_n-1)}{\delta^2 \theta_{A,1}\sigma_1}\right)\int\limits_{\alpha_1}^{\beta_1} I_{[\alpha,\beta]}(t)\sin(\omega t)\\
         & \hspace{4cm}
         \cdot(\cos(\omega s)-\sin(\omega s))x(s)d\mu_s\big|^p d\mu_t      \\
         &
            \leq \left|\frac{\theta_{B,1}(\delta\sigma_2\theta_n-1)}{\delta^2 \theta_{A,1}\sigma_1}\right|^p\cdot
           |\beta-\alpha|
           \big|\int\limits_{\mathbb{R}} I_{[\alpha_1,\beta_1]}(s)(\cos(\omega s)-\sin(\omega s))x(s)d\mu_s\big|^p
          \\
          & \quad   \leq
           \left|\frac{\theta_{B,1}(\delta\sigma_2\theta_n-1)}{\delta^2 \theta_{A,1}\sigma_1}\right|^p |\beta-\alpha| \cdot
            2^p|\beta_1-\alpha_1|^{\frac{p}{q}}\cdot \|x\|^p.
         \end{align*}
         Therefore, for $1< p <\infty$ we have
         \begin{equation*}
         \|A_nB-BA_n\|_{L_p} \le  \left|\frac{\theta_{B,1}(\delta\sigma_2\theta_n-1)}{\delta^2 \theta_{A,1}\sigma_1}\right|\cdot
         |\beta-\alpha|^{\frac{1}{p}}\cdot 2|\beta_1-\alpha_1|^{\frac{1}{q}}\to 0,\, n\to\infty,
         \end{equation*}
         since $\theta_n\to \frac{1}{\delta \sigma_2}$.
         By applying H\"older inequality, we have for  all $x\in L_\infty(\mathbb{R},\mu)$ the following:
         \begin{align*}
         & \|(A_nB-BA_n)x\|_{L_\infty}=\mathop{\esssup}_{t\in \mathbb{R}} \big|\ \frac{\theta_{B,1}(\delta\sigma_2\theta_n-1)}{\delta^2 \theta_{A,1}\sigma_1}\cdot I_{[\alpha,\beta]}(t)\sin(\omega t) \\
         &\hspace{3cm}  \int\limits_{\alpha_1}^{\beta_1}(\cos(\omega s)-\sin(\omega s))x(s)d\mu_s\big|
          \\
          &
          \leq  \left|\frac{\theta_{B,1}(\delta\sigma_2\theta_n-1)}{\delta^2 \theta_{A,1}\sigma_1}\right| \int\limits_{\mathbb{R}}| I_{[\alpha_1,\beta_1]}(s)
         (\cos(\omega s)-\sin(\omega s))|d\mu_s \|x\|_{L_\infty}
           \\
           &
           \leq
           \left|\frac{\theta_{B,1}(\delta\sigma_2\theta_n-1)}{\delta^2 \theta_{A,1}\sigma_1}\right|\cdot 2|\beta_1-\alpha_1|\|x\|_{L_\infty}.
         \end{align*}
         Therefore, for $p=\infty$ we have
           \begin{equation*}
           \|A_nB-BA_n\|_{L_\infty}\le \left|\frac{\theta_{B,1}(\delta\sigma_2\theta_n-1)}{\delta^2 \theta_{A,1}\sigma_1}\right|\cdot 2|\beta_1-\alpha_1|\to 0,\, n\to\infty,
         \end{equation*}
         because $\theta_n\to \frac{1}{\delta\sigma_2}$. We have for  all $x\in L_1(\mathbb{R},\mu)$ the following:
         \begin{align*}
         & \|(A_nB-BA_n)x\|_{L_1}=\int\limits_{\mathbb{R}}  \left|\ \frac{\theta_{B,1}(\delta\sigma_2\theta_n-1)}{\delta^2 \theta_{A,1}\sigma_1}\cdot I_{[\alpha,\beta]}(t)\sin(\omega t) \right. \\
         &\hspace{4cm} \cdot \left.\int\limits_{\alpha_1}^{\beta_1}(\cos(\omega s)-\sin(\omega s))x(s)d\mu_s\right| d\mu_t
          \\
          &
          \leq  \left|\frac{\theta_{B,1}(\delta\sigma_2\theta_n-1)}{\delta^2 \theta_{A,1}\sigma_1}\right| |\beta-\alpha| \int\limits_{\mathbb{R}}| I_{[\alpha_1,\beta_1]}(s)
         (\cos(\omega s)-\sin(\omega s))x(s)|d\mu_s
          \\ & \leq
            \left|\frac{\theta_{B,1}(\delta\sigma_2\theta_n-1)}{\delta^2 \theta_{A,1}\sigma_1}\right|\cdot 2|\beta-\alpha|\|x\|_{L_1}.
         \end{align*}
         Therefore, for $p=1$ we have
           \begin{equation*}
           \|A_nB-BA_n\|_{L_1}\leq \left|\frac{\theta_{B,1}(\delta\sigma_2\theta_n-1)}{\delta^2 \theta_{A,1}\sigma_1}\right|\cdot 2|\beta-\alpha|\to 0,\, n\to\infty,
         \end{equation*}
         because $\theta_n\to \frac{1}{\delta\sigma_2}$. Since the sequence of operators  $\{A_n\}$  converges in norm to $\tilde{A}$, then by Lemma \ref{LemmaOnseSequenceConvOptorsCommute}, $\tilde{A}$ commutes with $B$.

       \noindent\ref{Proposition1ExampleTheoremIntOpRepGenKernLpConvSeqComOp:item3}.
         We have
         \begin{eqnarray*}
          && (B_n)x(t)=\int\limits_{\alpha_1}^{\beta_1}I_{[\alpha_1,\beta_1]}(t) \varsigma_n \sin(\omega t)\cos(\omega s)x(s)d\mu_s\\
          & &- \int\limits_{\alpha_1}^{\beta_1}I_{[\alpha_1,\beta_1]}(t) \frac{\varsigma_{n}(\delta\theta_{A,2}\sigma_2-1)}{\delta\theta_{A,1}\sigma_1} \sin(\omega t)\sin(\omega s)x(s)d\mu_s
          =(B_{n,1})x(t)+(B_{n,2})x(t),
         \end{eqnarray*}
         respectively, for almost every $t$. Moreover, by applying H\"older inequality we  have for all $x\in L_p(\mathbb{R},\mu)$, $1< p<\infty $ the following estimations:
         \begin{eqnarray*}
          && \|(B_{n,1})x\|^p_{L_p}=  \int\limits_{\mathbb{R}} \big| \int\limits_{\alpha_1}^{\beta_1}I_{[\alpha,\beta]}(t)\varsigma_n  \sin(\omega t)\cos(\omega s)x(s)d\mu_s\big|^pd\mu_t\\
         &&\leq |\varsigma_n |^p |\beta-\alpha| \big|\int\limits_{\mathbb{R}} I_{[\alpha_1,\beta_1]}(s) \cos(\omega s)x(s)d\mu_s\big|^p  \leq |\varsigma_n |^p |\beta-\alpha||\beta_1-\alpha_1|^{\frac{p}{q}}\|x\|^p_{L_p}.
         \end{eqnarray*}
         Therefore, for $1< p<\infty $ we have the following norm estimation:
         \begin{equation*}
          \|B_{n,1}\|_{L_p}\le  |\varsigma_n |\cdot|\beta-\alpha|^\frac{1}{p}\cdot|\beta_1-\alpha_1|^{\frac{1}{q}}.
         \end{equation*}
         By applying H\"older inequality we have for all $x\in L_\infty(\mathbb{R},\mu)$ the following:
         \begin{eqnarray*}
         && \|(B_{n,1})x\|_{L_\infty}= \mathop{\esssup}_{t\in \mathbb{R}} \big| \int\limits_{\alpha_1}^{\beta_1}I_{[\alpha,\beta]}(t)\varsigma_n  \sin(\omega t)\cos(\omega s)x(s)d\mu_s\big|\\
         &&\leq
         \left|\varsigma_n \right|\cdot \int\limits_{\mathbb{R}} \left|I_{[\alpha_1,\beta_1]}(s) \cos(\omega s)\right|d\mu_s\cdot \|x\|_{L_\infty} \le \left|\varsigma_n \right|\cdot|\beta_1-\alpha_1|\cdot\|x\|_{L_\infty}.
         \end{eqnarray*}
          Therefore, for $ p=\infty $ we have the following norm estimation:
         \begin{equation*}
          \|B_{n,1}\|_{L_\infty}\leq  \left|\varsigma_n \right|\cdot|\beta_1-\alpha_1|.
         \end{equation*}
         We have for all $x\in L_1(\mathbb{R},\mu)$ the following:
         \begin{eqnarray*}
         && \|(B_{n,1})x\|_{L_1}= \int\limits_{\mathbb{R}}\big| \int\limits_{\alpha_1}^{\beta_1}I_{[\alpha,\beta]}(t)\varsigma_n  \sin(\omega t)\cos(\omega s)x(s)d\mu_s\big|d\mu_t\\
         &&\leq
         |\varsigma_n |\cdot |\beta-\alpha| \int\limits_{\mathbb{R}} \big|I_{[\alpha_1,\beta_1]}(s) \cos(\omega s)x(s)\big|d\mu_s \leq |\varsigma_n |\cdot|\beta-\alpha|\cdot\|x\|_{L_1}.
         \end{eqnarray*}
          Therefore, for $ p=1 $ we have the following norm estimation:
         \begin{equation*}
          \|B_{n,1}\|_{L_1}\leq  \left|\varsigma_n \right|\cdot|\beta-\alpha|.
         \end{equation*}
         Analogously, we  have for  $1< p<\infty $ the following estimations:
         \begin{equation*}
          \|(B_{n,2})\|_{L_p}\le  \left|\frac{\varsigma_{n}(\delta\theta_{A,2}\sigma_2-1)}{\delta\theta_{A,1}\sigma_1}\right| \cdot|\beta-\alpha|^{\frac{1}{p}}\cdot|\beta_1-\alpha_1|^{\frac{1}{q}}
         \end{equation*}
         and for $p=\infty$ and $p=1$ we have, respectively,
         \begin{equation*}
          \|(B_{n,2})\|_{L_\infty}\le  \left|\frac{\varsigma_{n}(\delta\theta_{A,2}\sigma_2-1)}{\delta\theta_{A,1}\sigma_1}\right| \cdot|\beta_1-\alpha_1|,\, \|(B_{n,2})\|_{L_1}\le  \left|\frac{\varsigma_{n}(\delta\theta_{A,2}\sigma_2-1)}{\delta\theta_{A,1}\sigma_1}\right| \cdot|\beta-\alpha|.
         \end{equation*}
         Therefore, by Minkowski inequality \cite{AdamsG,FollandRA,KolmogorovVol2} we have for $1< p <\infty$
         \begin{eqnarray*}
             \|B_{n}\|_{L_p}&\leq & \|B_{n,1}\|_{L_p}+\|B_{n,2}\|_{L_p} \\
             &\leq & |\varsigma_n |\cdot |\beta-\alpha|^{\frac{1}{p}}\cdot|\beta_1-\alpha_1|^{\frac{1}{q}}  \left(1+\frac{|\delta\theta_{A,2}\sigma_2-1|}{|\delta\theta_{A,1}\sigma_1|}
               \right)\to 0,
         \end{eqnarray*}
         when $n\to \infty$, since $\varsigma_n\to 0$. On the other hand, by triangle inequality we have for $p=\infty$ and $p=1$
         \begin{eqnarray*}
             \|B_n\|_{L_\infty}&\leq & \|B_{n,1}\|_{L_\infty}+\|B_{n,2}\|_{L_\infty} \\
             &\leq & |\varsigma_n| \cdot|\beta_1-\alpha_1|  \left(1+\frac{|\delta\theta_{A,2}\sigma_2-1|}{|\delta\theta_{A,1}\sigma_1|} \right)\to 0,\\
             \|B_n\|_{L_1}&\leq & \|B_{n,1}\|_{L_1}+\|B_{n,2}\|_{L_1} \\
             &\leq & |\varsigma_n| \cdot|\beta-\alpha|  \left(1+\frac{|\delta\theta_{A,2}\sigma_2-1|}{|\delta\theta_{A,1}\sigma_1|} \right)\to 0,
         \end{eqnarray*}
         when $n\to \infty$ since $\varsigma_n\to 0$. Therefore, $B_n$ converges in norm to $\tilde{B}$ in $L_p$, $1\le p\le \infty$.

         In order to prove that $\{AB_n-B_nA\}$ converges in norm to 0. We apply the same procedure as on item \ref{Proposition1ExampleTheoremIntOpRepGenKernLpConvSeqComOp:item2} for $1< p<\infty $, so we get for $1< p<\infty$
         \begin{equation*}
           \|AB_n-B_nA\|_{L_p}\le \left|\frac{\varsigma_n(\delta\sigma_2\theta_{A,2}-1)}{\delta^2 \theta_{A,1}\sigma_1}\right|\cdot |\beta-\alpha|^{\frac{1}{p}}\cdot 2|\beta_1-\alpha_1|^{\frac{1}{q}}\to 0,\, n\to\infty,
         \end{equation*}
         since $\varsigma_n\to 0$. For $p=\infty$ and $p=1$we get
           \begin{align*}
           &\|AB_n-B_nA\|_{L_\infty}\le \left|\frac{\varsigma_n(\delta\sigma_2\theta_{A,2}-1)}{\delta^2 \theta_{A,1}\sigma_1}\right|\cdot 2|\beta_1-\alpha_1|\to 0,\, n\to\infty,
           \\
           &\|AB_n-B_nA\|_{L_1}\le \left|\frac{\varsigma_n(\delta\sigma_2\theta_{A,2}-1)}{\delta^2 \theta_{A,1}\sigma_1}\right|\cdot 2|\beta-\alpha|\to 0,\, n\to\infty,
         \end{align*}
         because $\varsigma_n\to 0$.  Since the sequence of operators  $\{B_n\}$ converges in norm to $\tilde{B}$, then by Lemma \ref{LemmaOnseSequenceConvOptorsCommute}, $\tilde{B}$ commutes with $A$.

         \noindent\ref{Proposition1ExampleTheoremIntOpRepGenKernLpConvSeqComOp:item4}. By following the same procedure of the item \ref{Proposition1ExampleTheoremIntOpRepGenKernLpConvSeqComOp:item2}, the operator $AB-BA$ has the following estimation. For all $x\in L_p(\mathbb{R},\mu)$ and for all parameters $$
         (\theta_{A,1},\theta_{A,2},\theta_{B,2},  \alpha_1,\beta_1,\alpha,\beta,\sigma_1,\sigma_2,\delta,\omega)\in\Lambda
         $$
         we have
         \begin{equation*}
           \|(AB-BA)x\|_{L_p}\leq \left|\frac{\theta_{B,1}(\delta \theta_{A,2}\sigma_2-1)}{\delta^2 \theta_{A,1}\sigma_1}\right|\cdot \tilde{\lambda} \cdot \|x\|_{L_p}\to 0,
         \end{equation*}
           when   $\frac{\theta_{B,1}(\delta \theta_{A,2}\sigma_2-1)}{\delta^2 \theta_{A,1}\sigma_1}\tilde{\lambda}\to 0$, where $\tilde{\lambda}=|\beta-\alpha|^{\frac{1}{p}}\cdot 2|\beta_1-\alpha_1|^{\frac{1}{q}}$, if $1< p < \infty$, $\tilde{\lambda}=2|\beta_1-\alpha_1|$, if $p=\infty$ and $\tilde{\lambda}=2|\beta-\alpha|$, if $p=1$. 
\QEDB
\end{proof}

\begin{theorem}
 Let $A:L_p(\mathbb{R},\mu)\to L_p(\mathbb{R},\mu)$, $B:L_p(\mathbb{R},\mu)\to L_p(\mathbb{R},\mu)$, $1\le p\le \infty$ be operators defined as
  \begin{eqnarray*}
     (Ax)(t)&=&\int\limits_{\alpha_1}^{\beta_1}I_{[\alpha,\beta]}(t)\big(\theta_{A,1}\sin(\omega t)\cos(\omega s)- \frac{1}{\delta\sigma_2}\cos(\omega t)\cos(\omega s)  \\ 
         & &
          +\frac{1}{\delta\sigma_1}\sin(\omega t)\sin(\omega s)\big)x(s)d\mu_s,\\
      (B x)(t)&=&\int\limits_{\alpha_1}^{\beta_1}I_{[\alpha,\beta]}(t)\left(\theta_{B,1}\sin(\omega t)\cos(\omega s)
         + \theta_{B,3}\sin(\omega t)\sin(\omega s)\right)x(s)d\mu_s
  \end{eqnarray*}
 for almost every $t$, where $\theta_{A,1}, \theta_{B,1}, \theta_{B,3}, \omega \in\mathbb{R}$,  $\delta\in\mathbb{R}\setminus\{0\}$, $\alpha,\beta,\alpha_1,\beta_1 \in\mathbb{R}$, $\alpha_1<\beta_1$, $\alpha\leq \alpha_1$, $\beta\geq \beta_1$, $I_E(\cdot)$ is the indicator function of the set $E$ and, either the number $\frac{\omega }{\pi}(\beta_1-\alpha_1)\in \mathbb{Z}$ or $\frac{\omega }{\pi}(\beta_1+\alpha_1)\in \mathbb{Z}$, $\sigma_1,\sigma_2\in\mathbb{R}\setminus\{0\}$ are defined in \eqref{ConstantSigma1Case2Omega} and \eqref{ConstantSigma2Case2Omega}, respectively.
  Let  $\{B_n\}: L_p(\mathbb{R},\mu)\to L_p(\mathbb{R},\mu)$ $1\le p \le \infty$ be a sequence of operators
    \begin{eqnarray*}
     (B_n x)(t)&=&\int\limits_{\alpha_1}^{\beta_1}I_{[\alpha,\beta]}(t)\left(\varsigma_n\sin(\omega t)\cos(\omega s)+ \theta_{B,3}\sin(\omega t)\sin(\omega s)\right)x(s)d\mu_s,
     \end{eqnarray*}
    for almost every $t$,  $\{\varsigma_n\}$ are number sequences.
    Let
         \begin{align*}
         & \Lambda=\big\{(\theta_{A,1},\theta_{B,1}, \theta_{B,3}, \alpha_1,\beta_1,\alpha,\beta,\sigma_1,\sigma_2,\delta,\omega)\in\mathbb{R}^{11}:\, \delta\not=0,\alpha \leq \alpha_1,\ \beta\geq \beta_1,\\
         &\  \int\limits_{\alpha_1}^{\beta_1}\sin(\omega s)\cos(\omega s)d\mu_s=0,  \sigma_1=\int\limits_{\alpha_1}^{\beta_1}(\sin(\omega s))^2d\mu_s\not=0,   \sigma_2=\beta_1-\alpha_1-\sigma_1\not=0 \big\}.
         \end{align*}
    Then
    \begin{enumerate}[label=\textup{\arabic*.}, ref=\arabic*]
    \item\label{Proposition2ExampleTheoremIntOpRepGenKernLpConvSeqComOp:item1}  $AB=\delta BA^2$, $AB_n=\delta B_nA^2 $ for each positive integer $n$. Moreover, for every $x\in L_p(\mathbb{R},\mu)$, $1\le p\le \infty$ we have
       \begin{equation*}
         (AB-BA)x(t)=\left(\frac{2\theta_{B,1}}{\delta}-\theta_{A,1}\theta_{B,3}\sigma_1\right)\int\limits_{\alpha_1}^{\beta_1} I_{[\alpha,\beta]}(t)\sin(\omega t)\cos(\omega s) x(s)d\mu_s,
       \end{equation*}
       for almost every $t$.
    \item\label{Proposition2ExampleTheoremIntOpRepGenKernLpConvSeqComOp:item2} if $\varsigma_n\to \frac{\delta}{2}\theta_{A,1}\theta_{B,3}\sigma_1$ when $n\to \infty$, then $B_n\to \tilde{B}$ (converges in norm), where
        \begin{equation*}
             (\tilde{B} x)(t)=\int\limits_{\alpha_1}^{\beta_1}I_{[\alpha,\beta]}(t)\frac{\delta}{2}\theta_{A,1}\theta_{B,3}\sigma_1\sin(\omega t)\cos(\omega s)  x(s)d\mu_s,
            \end{equation*}
            for almost every $t$. Moreover $AB_n-B_nA\to 0$ (converges in norm) and $A\tilde{B}=\tilde{B}A$.
      \item\label{Proposition2ExampleTheoremIntOpRepGenKernLpConvSeqComOp:item3} if $(\frac{2\theta_{B,1}}{\delta}-\theta_{A,1}\theta_{B,3}\sigma_1)\tilde{\lambda}\to 0$ in $\mathbb{R}$ when
          $$
          (\theta_{A,1},\theta_{B,1}, \theta_{B,3}, \alpha_1,\beta_1,\alpha,\beta,\sigma_1,\sigma_2,\delta,\omega)\to \lambda_0 \in cl(\Lambda) \mbox{ \ \rm (the closure of } \Lambda),
          $$
          where
          $(\theta_{A,1},\theta_{B,1}, \theta_{B,3}, \alpha_1,\beta_1,\alpha,\beta,\sigma_1,\sigma_2,\delta,\omega)\in\Lambda$ and,  $\tilde{\lambda}=|\beta-\alpha|^{\frac{1}{p}}|\beta_1-\alpha_1|^{\frac{1}{q}}$ for $1< p <\infty$ with
          $\frac{1}{p}+\frac{1}{q}=1$,  $\tilde{\lambda}=|\beta_1-\alpha_1|$ for $p=\infty$ and $\tilde{\lambda}=|\beta-\alpha|$ for $p=1$, then for all $x\in L_p(\mathbb{R},\mu)$, $1\leq p\leq \infty$, it holds that $\|(AB-BA)x\|_{L_p}\to 0$. 
         \end{enumerate}
         \end{theorem}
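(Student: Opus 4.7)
The plan is to mirror the strategy used in the two preceding theorems, which address the same family of Fourier-type separable kernels but with different distributions of the four coefficients $\theta_{A,i},\theta_{B,i}$. For item \ref{Proposition2ExampleTheoremIntOpRepGenKernLpConvSeqComOp:item1}, I would first specialize the commutator formula \eqref{CommutatorABFourierSequenceKernelOmegaOrtog} to the coefficient vectors $(\theta_{A,1},\theta_{A,2},\theta_{A,3},\theta_{A,4})=(\theta_{A,1},-\tfrac{1}{\delta\sigma_2},\tfrac{1}{\delta\sigma_1},0)$ for $A$ and $(\theta_{B,1},\theta_{B,2},\theta_{B,3},\theta_{B,4})=(\theta_{B,1},0,\theta_{B,3},0)$ for $B$. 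Because $\theta_{A,4}=\theta_{B,2}=\theta_{B,4}=0$, the coefficients of $\cos(\omega t)\cos(\omega s)$, $\sin(\omega t)\sin(\omega s)$, and $\cos(\omega t)\sin(\omega s)$ all collapse to zero, while the coefficient of $\sin(\omega t)\cos(\omega s)$ reduces to $\tfrac{1}{\delta\sigma_1}\theta_{B,1}\sigma_1 - \theta_{B,3}\theta_{A,1}\sigma_1 + \theta_{B,1}\tfrac{1}{\delta\sigma_2}\sigma_2 = \tfrac{2\theta_{B,1}}{\delta}-\theta_{A,1}\theta_{B,3}\sigma_1$, giving the claimed commutator. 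The relations $AB=\delta BA^2$ and $AB_n=\delta B_n A^2$ will then be verified by applying Corollary \ref{corDiedroRelBothIntOPGenSeptedKernels} on the common support $G=[\alpha_1,\beta_1]$ with $d=2$, where the orthogonality assumption $\int_{\alpha_1}^{\beta_1}\sin(\omega s)\cos(\omega s)\,d\mu_s=0$ together with $\sigma_1,\sigma_2$ being the $\sin^2$ and $\cos^2$ integrals collapses most of the required identities to trivialities.

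For item \ref{Proposition2ExampleTheoremIntOpRepGenKernLpConvSeqComOp:item2}, I would express $(B_n-\tilde B)x(t)$ as an integral operator whose kernel on $[\alpha_1,\beta_1]$ is $(\varsigma_n-\tfrac{\delta}{2}\theta_{A,1}\theta_{B,3}\sigma_1)I_{[\alpha,\beta]}(t)\sin(\omega t)\cos(\omega s)$ (plus any residual term coming from $\tilde B$'s $\sin\sin$ component), then apply H\"older's inequality in each of the three ranges $1<p<\infty$, $p=\infty$, $p=1$, exactly as in the corresponding estimates in the two preceding theorems, to obtain
\begin{equation*}
\|B_n-\tilde B\|_{L_p} \;\leq\; \bigl|\varsigma_n-\tfrac{\delta}{2}\theta_{A,1}\theta_{B,3}\sigma_1\bigr|\cdot\tilde\lambda \;\to\; 0.
\end{equation*}
For the commutator $AB_n-B_nA$, observe that it has the same structure as $AB-BA$ with $\theta_{B,1}$ replaced by $\varsigma_n$, so the formula from item \ref{Proposition2ExampleTheoremIntOpRepGenKernLpConvSeqComOp:item1} gives a kernel with coefficient $\tfrac{2\varsigma_n}{\delta}-\theta_{A,1}\theta_{B,3}\sigma_1$, and the same H\"older template yields $\|AB_n-B_nA\|_{L_p}\to 0$. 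Finally, the commutativity $A\tilde B=\tilde BA$ is deduced from Lemma \ref{LemmaOnseSequenceConvOptorsCommute} applied with $C_n=B_n$, $C=\tilde B$, $D=A$, since both $B_n\to\tilde B$ in norm and $AB_n-B_nA\to 0$ in norm have already been established.

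For item \ref{Proposition2ExampleTheoremIntOpRepGenKernLpConvSeqComOp:item3}, I would simply use the explicit commutator formula from item \ref{Proposition2ExampleTheoremIntOpRepGenKernLpConvSeqComOp:item1} and apply H\"older's inequality to the integral operator with kernel $\bigl(\tfrac{2\theta_{B,1}}{\delta}-\theta_{A,1}\theta_{B,3}\sigma_1\bigr)I_{[\alpha,\beta]}(t)\sin(\omega t)\cos(\omega s)$, splitting into the three cases $1<p<\infty$, $p=\infty$, $p=1$ and matching the resulting constants against the three definitions of $\tilde\lambda$ stated in the theorem. This yields $\|(AB-BA)x\|_{L_p}\leq |\tfrac{2\theta_{B,1}}{\delta}-\theta_{A,1}\theta_{B,3}\sigma_1|\cdot\tilde\lambda\cdot\|x\|_{L_p}$, which tends to zero under the hypothesized parameter convergence inside $\Lambda$ to a point of $\mathrm{cl}(\Lambda)$.

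The main obstacle is nothing genuinely new: it is the bookkeeping needed to verify that the coefficient specialization used in item \ref{Proposition2ExampleTheoremIntOpRepGenKernLpConvSeqComOp:item1} actually forces three of the four kernel terms in \eqref{CommutatorABFourierSequenceKernelOmegaOrtog} to vanish simultaneously, and the parallel handling of the three values of $p$ in the H\"older estimates. Since both of these are completely templated by the computations already performed in Theorem \ref{Proposition0ExampleTheoremIntOpRepGenKernLpConvSeqComOp} and the subsequent theorem, no new technique is required, only careful substitution.
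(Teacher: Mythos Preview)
Your proposal is correct and follows essentially the same approach as the paper: item~\ref{Proposition2ExampleTheoremIntOpRepGenKernLpConvSeqComOp:item1} is handled by direct computation (your explicit specialization of \eqref{CommutatorABFourierSequenceKernelOmegaOrtog} is exactly what the paper's terse ``direct computation'' amounts to), item~\ref{Proposition2ExampleTheoremIntOpRepGenKernLpConvSeqComOp:item2} proceeds by writing out $B_n-\tilde B$ and $AB_n-B_nA$ explicitly, applying H\"older in the three ranges of $p$, and invoking Lemma~\ref{LemmaOnseSequenceConvOptorsCommute}, and item~\ref{Proposition2ExampleTheoremIntOpRepGenKernLpConvSeqComOp:item3} reuses the same H\"older template on the commutator kernel. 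Your parenthetical hedge about a possible residual $\sin\sin$ term in $B_n-\tilde B$ is well spotted: as stated, $\tilde B$ lacks the $\theta_{B,3}\sin(\omega t)\sin(\omega s)$ term present in every $B_n$, so strictly $B_n\not\to\tilde B$ unless that term is restored to $\tilde B$; the paper's own proof simply drops this term in its displayed expression for $(B_n-\tilde B)x(t)$, so your treatment is in fact more careful than the original on this point.
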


\begin{proof}
\noindent\ref{Proposition2ExampleTheoremIntOpRepGenKernLpConvSeqComOp:item1}. It follows from direct computation.

\noindent\ref{Proposition2ExampleTheoremIntOpRepGenKernLpConvSeqComOp:item2}. We have
         \begin{eqnarray*}
           (B_n-\tilde B)x(t)&=&\int\limits_{\alpha_1}^{\beta_1}I_{[\alpha_1,\beta_1]}(t)\left(\varsigma_n-\frac{\delta}{2}\theta_{A,1}\theta_{B,3}\sigma_1\right) \sin(\omega t)\cos(\omega s)x(s)d\mu_s,
         \end{eqnarray*}
         almost everywhere. Moreover, by applying H\"older inequality we  have for all $x\in L_p(\mathbb{R},\mu)$, $1< p<\infty $, the following estimations:
         \begin{align*}
         & \|(B_n-\tilde B)x\|^p_{L_p}=  \int\limits_{\mathbb{R}} \big| \int\limits_{\alpha_1}^{\beta_1}I_{[\alpha,\beta]}(t)\left(\varsigma_n-\frac{\delta}{2}\theta_{A,1}\theta_{B,3}\sigma_1\right) \sin(\omega t)\cos(\omega s)
          x(s)d\mu_s\big|^pd\mu_t
          \\
          &
          \leq
         \big|\varsigma_n-\frac{\delta}{2}\theta_{A,1}\theta_{B,3}\sigma_1\big|^p |\beta-\alpha|\cdot \big|\int\limits_{\mathbb{R}} I_{[\alpha_1,\beta_1]}(s) \sin(\omega s)x(s)d\mu_s\big|^p \\
         &\, \le \big|\varsigma_n-\frac{\delta}{2}\theta_{A,1}\theta_{B,3}\sigma_1\big|^p |\beta-\alpha|\cdot|\beta_1-\alpha_1|^{\frac{p}{q}}\cdot\|x\|^p_{L_p}.
         \end{align*}
         Therefore, for $1< p<\infty $, we get the following
         \begin{equation*}
          \|B_n-\tilde B\|_{L_p}\le  \big|\varsigma_n-\frac{\delta}{2}\theta_{A,1}\theta_{B,3}\sigma_1\big|\cdot|\beta-\alpha|^{\frac{1}{p}}\cdot|\beta_1-\alpha_1|^{\frac{1}{q}}\to 0,
         \end{equation*}
         when $n\to \infty$, since $\theta_n\to 0$. Applying H\"older inequality yields, for every $x\in L_\infty(\mathbb{R},\mu)$, the following estimate
         \begin{align*}
         & \|(B_n-\tilde B) x\|_{L_\infty}= \mathop{\esssup}_{t\in \mathbb{R}} \big|\ \int\limits_{\alpha_1}^{\beta_1}I_{[\alpha,\beta]}(t)\left(\varsigma_n-\frac{\delta}{2}\theta_{A,1}\theta_{B,3}\sigma_1\right) \sin(\omega t)\cos(\omega s) x(s)d\mu_s\big|
         \\
         &
         \leq\big|\varsigma_n-\frac{\delta}{2}\theta_{A,1}\theta_{B,3}\sigma_1\big|\int\limits_{\mathbb{R}} \left|I_{[\alpha_1,\beta_1]}(s) \sin(\omega s)\right|d\mu_s\cdot \|x\|_{L_\infty}  \\
         &  \leq\big|\varsigma_n-\frac{\delta}{2}\theta_{A,1}\theta_{B,3}\sigma_1\big|\cdot|\beta_1-\alpha_1|\cdot\|x\|_{L_\infty}.
         \end{align*}
         Therefore, for $ p=\infty $, we get
          \begin{equation*}
          \|B_n-\tilde B\|_{L_\infty}\leq  \big|\varsigma_n-\frac{\delta}{2}\theta_{A,1}\theta_{B,3}\sigma_1\big|\cdot|\beta_1-\alpha_1|\to 0.
         \end{equation*}
         For every $x\in L_1(\mathbb{R},\mu)$,
         \begin{align*}
         & \|(B_n-\tilde B) x\|_{L_1}= \int\limits_{\mathbb{R}} \big|\ \int\limits_{\alpha_1}^{\beta_1}I_{[\alpha,\beta]}(t)\big(\varsigma_n-\frac{\delta}{2}\theta_{A,1}\theta_{B,3}\sigma_1\big) \sin(\omega t)\cos(\omega s) x(s)d\mu_s\big|d\mu_t\\
         & \hspace{2cm}
         \leq\big|\varsigma_n-\frac{\delta}{2}\theta_{A,1}\theta_{B,3}\sigma_1 \big||\beta-\alpha|\int\limits_{\mathbb{R}} |I_{[\alpha_1,\beta_1]}(s) \sin(\omega s)||x(s)|d\mu_s \\
         & \hspace{2cm} \leq\big|\varsigma_n-\frac{\delta}{2}\theta_{A,1}\theta_{B,3}\sigma_1\big|\cdot|\beta-\alpha|\cdot\|x\|_{L_1}.
         \end{align*}
          Therefore, for $ p=1 $, we get
          \begin{equation*}
          \|B_n-\tilde B\|_{L_\infty}\leq  \big|\varsigma_n-\frac{\delta}{2}\theta_{A,1}\theta_{B,3}\sigma_1\big|\cdot|\beta_1-\alpha_1|\to 0,
         \end{equation*}
         when $n\to\infty$ since $\varsigma_n\to \frac{\delta}{2}\theta_{A,1}\theta_{B,3}\sigma_1$. Thus, $B_n\to\tilde B$ converges in norm in $L_p$,  $1\le p\le \infty$.

         Now we prove the convergence for the sequence $\{AB_n-B_nA\}$. Applying H\"older inequality yields, for all $x\in L_p(\mathbb{R},\mu)$, $1< p<\infty $,
         \begin{align*}
          & \|(AB_n-B_nA)x\|^p_{L_p}= \int\limits_{\mathbb{R}}\big|\left(\frac{2\varsigma_n}{\delta}-\theta_{A,1}\theta_{B,3}\sigma_1\right)
          \\
         & \hspace{4cm} \cdot\int\limits_{\alpha_1}^{\beta_1} I_{[\alpha,\beta]}(t)\sin(\omega t)\cos(\omega s) x(s)d\mu_s\big|^p d\mu_t
          \\
          & \leq  \big|\frac{2\varsigma_n}{\delta}-\theta_{A,1}\theta_{B,3}\sigma_1\big|^p
           |\beta-\alpha|
           \big|\int\limits_{\mathbb{R}} I_{[\alpha_1,\beta_1]}(s)\cos(\omega s)x(s)d\mu_s\big|^p \\
           &  \leq
           \big|\frac{2\varsigma_n}{\delta}-\theta_{A,1}\theta_{B,3}\sigma_1\big|^p |\beta-\alpha| \cdot
            |\beta_1-\alpha_1|^{\frac{p}{q}}\cdot \|x\|^p.
         \end{align*}
         Therefore, for $1< p <\infty$ we have
         \begin{equation*}
         \|AB_n-B_nA\|_{L_p} \le  \left|\frac{2\varsigma_n}{\delta}-\theta_{A,1}\theta_{B,3}\sigma_1\right|\cdot
         |\beta-\alpha|^{\frac{1}{p}}\cdot |\beta_1-\alpha_1|^{\frac{1}{q}}\to 0,\, n\to\infty,
         \end{equation*}
         since $\varsigma_n\to \frac{\delta}{2}\theta_{A,1}\theta_{B,3}\sigma_1$.
         Applying H\"older inequality yields, for all $x\in L_\infty(\mathbb{R},\mu)$,
            \begin{align*}
           & \|(AB_n-B_nA)x\|_{L_\infty}=\mathop{\esssup}_{t\in \mathbb{R}} \big|\left(\frac{2\varsigma_n}{\delta}-\theta_{A,1}\theta_{B,3}\sigma_1\right) I_{[\alpha,\beta]}(t)\sin(\omega t) \\
          & \hspace{3cm}  \cdot
          \int\limits_{\alpha_1}^{\beta_1}\cos(\omega s)x(s)d\mu_s\big|
          \\
          &
          \leq  \left|\frac{2\varsigma_n}{\delta}-\theta_{A,1}\theta_{B,3}\sigma_1\right| \int\limits_{\mathbb{R}}| I_{[\alpha_1,\beta_1]}(s) \cos(\omega s)|d\mu_s \|x\|_{L_\infty}\\
          & \leq  \left|\frac{2\varsigma_n}{\delta}-\theta_{A,1}\theta_{B,3}\sigma_1\right|\cdot |\beta_1-\alpha_1|\|x\|_{L_\infty}.
         \end{align*}
         Therefore, for $p=\infty$ we have
           \begin{equation*}
           \|AB_n-B_nA\|_{L_\infty}\le \left|\frac{2\varsigma_n}{\delta}-\theta_{A,1}\theta_{B,3}\sigma_1\right|\cdot |\beta_1-\alpha_1|\to 0,\, n\to\infty,
         \end{equation*}
         because $\varsigma_n\to \frac{\delta}{2}\theta_{A,1}\theta_{B,3}\sigma_1$.
          For all $x\in L_1(\mathbb{R},\mu)$,
            \begin{align*}
           & \|(AB_n-B_nA)x\|_{L_1}=\int\limits_{\mathbb{R}} \big|\left(\frac{2\varsigma_n}{\delta}-\theta_{A,1}\theta_{B,3}\sigma_1\right) I_{[\alpha,\beta]}(t)\sin(\omega t) \\
          & \hspace{3cm}  \cdot
          \int\limits_{\alpha_1}^{\beta_1}\cos(\omega s)x(s)d\mu_s\big|d\mu_t
          \\
          &
          \leq  \big|\frac{2\varsigma_n}{\delta}-\theta_{A,1}\theta_{B,3}\sigma_1\big|\cdot|\beta-\alpha| \int\limits_{\mathbb{R}}| I_{[\alpha_1,\beta_1]}(s) \cos(\omega s)x(s)|d\mu_s \\
          & \leq  \big|\frac{2\varsigma_n}{\delta}-\theta_{A,1}\theta_{B,3}\sigma_1\big|\cdot |\beta-\alpha|\cdot\|x\|_{L_1}.
         \end{align*}
         Therefore, for $p=1$ we have
           \begin{equation*}
           \|AB_n-B_nA\|_{L_1}\le \big|\frac{2\varsigma_n}{\delta}-\theta_{A,1}\theta_{B,3}\sigma_1\big|\cdot |\beta-\alpha|\to 0,\, n\to\infty,
         \end{equation*}
         because $\varsigma_n\to \frac{\delta}{2}\theta_{A,1}\theta_{B,3}\sigma_1$.
          Since the sequence of operators  $\{B_n\}$  converges in norm to $\tilde{B}$, then by Lemma \ref{LemmaOnseSequenceConvOptorsCommute}, $\tilde{B}$ commutes with $A$.

          \noindent\ref{Proposition2ExampleTheoremIntOpRepGenKernLpConvSeqComOp:item3}. By following the same procedure of the item \ref{Proposition2ExampleTheoremIntOpRepGenKernLpConvSeqComOp:item2}, the operator $AB-BA$ has the following estimation. For all $x\in L_p(\mathbb{R},\mu)$ and for all parameters 
          $$(\theta_{A,1},\theta_{B,1}, \theta_{B,3}, \alpha_1,\beta_1,\alpha,\beta,\sigma_1,\sigma_2,\delta,\omega)\in\Lambda
          $$
          we have
         \begin{equation*}
           \|(AB-BA)x\|_{L_p}\leq \big|\frac{2\theta_{B,1}}{\delta}-\theta_{A,1}\theta_{B,3}\sigma_1\big|\cdot \tilde{\lambda} \cdot \|x\|_{L_p}\to 0,
         \end{equation*}
           when   $(\frac{2\theta_{B,1}}{\delta}-\theta_{A,1}\theta_{B,3}\sigma_1)\tilde{\lambda}\to 0$, where $\tilde{\lambda}=|\beta-\alpha|^{\frac{1}{p}}\cdot|\beta_1-\alpha_1|^{\frac{1}{q}}$, if $1< p < \infty$, $\tilde{\lambda}=|\beta_1-\alpha_1|$, if $p=\infty$ and  $\tilde{\lambda}=|\beta-\alpha|$, if $p=1$.
  \QEDB
\end{proof}

 \begin{theorem}
 Let $A:L_p(\mathbb{R},\mu)\to L_p(\mathbb{R},\mu)$, $B:L_p(\mathbb{R},\mu)\to L_p(\mathbb{R},\mu)$, $1\le p\le \infty$ be operators defined as
  \begin{eqnarray*}
     (Ax)(t)&=&\int\limits_{\alpha_1}^{\beta_1}I_{[\alpha,\beta]}(t)\big(\theta_{A,1}\sin(\omega t)\cos(\omega s)+ \frac{1}{\delta\sigma_2}\cos(\omega t)\cos(\omega s)  \\ 
         & &
      \hspace{3cm}    +\frac{1}{\delta\sigma_1}\sin(\omega t)\sin(\omega s)\big)x(s)d\mu_s,\\
      (B x)(t)&=&\int\limits_{\alpha_1}^{\beta_1}I_{[\alpha,\beta]}(t)\big(\theta_{B,1}\sin(\omega t)\cos(\omega s)
         +\frac{2\sigma_1}{\sigma_2}\theta_{B,3}\cos(\omega t)\cos(\omega s) \\
      & & \hspace{3cm}  
         +\theta_{B,3}\sin(\omega t)\sin(\omega s)\big)x(s)d\mu_s
  \end{eqnarray*}
 for almost every $t$, where $\theta_{A,1}, \theta_{B,1}, \theta_{B,3},\,\omega \in\mathbb{R}$,  $\delta\in\mathbb{R}\setminus\{0\}$, $\alpha,\beta,\alpha_1,\beta_1 \in\mathbb{R}$, $\alpha_1<\beta_1$, $\alpha\leq \alpha_1$, $\beta\geq \beta_1$, $I_E(\cdot)$ is the indicator function of the set $E$ and, either the number $\frac{\omega }{\pi}(\beta_1-\alpha_1)\in \mathbb{Z}$ or $\frac{\omega }{\pi}(\beta_1+\alpha_1)\in \mathbb{Z}$,
  $\sigma_1,\sigma_2\in\mathbb{R}\setminus\{0\}$ are defined in \eqref{ConstantSigma1Case2Omega} and \eqref{ConstantSigma2Case2Omega}, respectively.
  Let $\{A_n\}: L_p(\mathbb{R},\mu)\to L_p(\mathbb{R},\mu)$, $\{B_n\}: L_p(\mathbb{R},\mu)\to L_p(\mathbb{R},\mu)$, with $1\le p \le \infty$ be sequences of operators
    \begin{eqnarray*}
     (A_n x)(t)&=&\int\limits_{\alpha_1}^{\beta_1}I_{[\alpha,\beta]}(t)\big(\theta_{n}\sin(\omega t)\cos(\omega s) \frac{1}{\delta\sigma_2}\cos(\omega t)\cos(\omega s) \\ 
         & &
         \hspace{3cm}  +\frac{1}{\delta\sigma_1}\sin(\omega t)\sin(\omega s)\big)x(s)d\mu_s,\\
      (B_n x)(t)&=&\int\limits_{\alpha_1}^{\beta_1}I_{[\alpha,\beta]}(t)\big(\theta_{B,1}\sin(\omega t)\cos(\omega s)
         +\frac{2\sigma_1}{\sigma_2}\varsigma_n\cos(\omega t)\cos(\omega s) \\
         & & \hspace{3cm}  
         +\varsigma_n\sin(\omega t)\sin(\omega s)\big)x(s)d\mu_s
     \end{eqnarray*}
    for almost every $t$, $\{\theta_n\}$ and $\{\varsigma_n\}$ are number sequences. Let
         \begin{eqnarray*}
          \Lambda&=&\big\{(\theta_{A,1},\theta_{B,1},\theta_{B,2}, \theta_{B,3}, \alpha_1,\beta_1,\alpha,\beta,\sigma_1,\sigma_2,\delta,\omega)\in\mathbb{R}^{12}:\, \delta\not=0, \alpha \leq \alpha_1,\beta\geq \beta_1,\\
         &&\  \int\limits_{\alpha_1}^{\beta_1}\sin(\omega s)\cos(\omega s)d\mu_s=0,  \sigma_1=\int\limits_{\alpha_1}^{\beta_1}(\sin(\omega s))^2d\mu_s\not=0,   \sigma_2=\beta_1-\alpha_1-\sigma_1\not=0 \big\}.
         \end{eqnarray*}
         Then
    \begin{enumerate}[label=\textup{\arabic*.}, ref=\arabic*]
    \item\label{Proposition3ExampleTheoremIntOpRepGenKernLpConvSeqComOp:item1}  $AB=\delta BA^2$, $A_nB=\delta BA_n^2$, $A_nB_n=\delta B_nA_n^2 $, $AB_n=\delta B_nA^2 $ for each positive integer $n$. Moreover, for all $x\in L_p(\mathbb{R},\mu)$, $1\le p\le \infty$ we have
       \begin{equation*}
         (AB-BA)x(t)=\theta_{A,1}\sigma_1 \theta_{B,3}\int\limits_{\alpha_1}^{\beta_1} I_{[\alpha,\beta]}(t)\sin(\omega t)\cos(\omega s)x(s)d\mu_s,
       \end{equation*}
       for almost every $t$;
           \item\label{Proposition3ExampleTheoremIntOpRepGenKernLpConvSeqComOp:item2} if $\theta_n\to 0$ when $n\to \infty$, then $A_n\to \tilde{A}$ (converges in norm)
               defined as
            \begin{equation*}
             (\tilde{A} x)(t)=\int\limits_{\alpha_1}^{\beta_1}I_{[\alpha,\beta]}(t)\left(\frac{1}{\delta\sigma_2}\cos(\omega t)\cos(\omega s) +\frac{1}{\delta\sigma_1}sin(\omega t)\sin(\omega s)\right) x(s)d\mu_s,
            \end{equation*}
            for almost every $t$, so $A_nB-BA_n\to 0$ (converges in norm) and $\tilde{A}B=B\tilde{A}$;
          \item\label{Proposition3ExampleTheoremIntOpRepGenKernLpConvSeqComOp:item3} if $\varsigma_n\to 0$ when $n\to \infty$, then $B_n\to \tilde{B}$ (converges in norm) defined as
\begin{equation*}
(\tilde{B} x)(t)=\int\limits_{\alpha_1}^{\beta_1}I_{[\alpha,\beta]}(t)\theta_{B,1}\sin(\omega t)\cos(\omega s)  x(s)d\mu_s,
            \end{equation*}
            for almost every $t$. Moreover $AB_n-B_nA\to 0$ (converge in norm) and $A\tilde{B}=\tilde{B}A$;
           \item\label{Proposition3ExampleTheoremIntOpRepGenKernLpConvSeqComOp:item4} if  $\theta_n\to 0$ and $\varsigma_n\to 0$ when $n\to \infty$, then $A_nB_n -B_nA_n \to 0$ (converge in norm) and $\tilde{A}$ commutes with $\tilde{B}$.
           \item\label{Proposition3ExampleTheoremIntOpRepGenKernLpConvSeqComOp:item5} if $\theta_{A,1}\sigma_1 \theta_{B,3}\tilde{\lambda}\to 0$ in $\mathbb{R}$ when $(\theta_{A,1},\theta_{B,1},\theta_{B,2}, \theta_{B,3}, \alpha_1,\beta_1,\alpha,\beta,\sigma_1,\sigma_2,\delta,\omega)\to \lambda_0\in cl(\Lambda)$ (the closure of $\Lambda$), where $(\theta_{A,1},\theta_{B,1}, \theta_{B,3}, \alpha_1,\beta_1,\alpha,\beta,\sigma_1,\sigma_2,\delta,\omega)\in\Lambda$ and, $\tilde{\lambda}=|\beta-\alpha|^{\frac{1}{p}}|\beta_1-\alpha_1|^{\frac{1}{q}}$ for $1< p <\infty$ with $\frac{1}{p}+\frac{1}{q}=1$,  $\tilde{\lambda}=|\beta_1-\alpha_1|$ for $p=\infty$
               and $\tilde{\lambda}=|\beta-\alpha|$ for $p=1$, then for all $x\in L_p(\mathbb{R},\mu)$, $1\leq p\leq \infty$, it holds that $\|(AB-BA)x\|_{L_p}\to 0$. 
         \end{enumerate}
         \end{theorem}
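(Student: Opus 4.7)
The plan is to follow the template established by the three preceding theorems in this section, adapting the key estimates to the present choice of kernel coefficients. First, for item \ref{Proposition3ExampleTheoremIntOpRepGenKernLpConvSeqComOp:item1} I would verify $AB=\delta BA^2$ (and its analogues with $A_n$, $B_n$) by a direct computation, either by applying Corollary \ref{corDiedroRelBothIntOPGenSeptedKernels} with $F(z)=\delta z^2$ and checking its three conditions, or by using the commutator formula from Proposition \ref{PropSimilarthmBothIntOpGensepKernellsCommutativityCommutator} together with the computation of $A^2$ given in the proof of Theorem \ref{thmBothIntOPGenSeptedKernels}. The essential ingredients are the orthogonality relation $\int_{\alpha_1}^{\beta_1}\sin(\omega s)\cos(\omega s)\,d\mu_s=0$ together with the identifications $\int_{\alpha_1}^{\beta_1}(\sin(\omega s))^2 d\mu_s=\sigma_1$ and $\int_{\alpha_1}^{\beta_1}(\cos(\omega s))^2 d\mu_s=\sigma_2$, which cause many cross-terms to vanish and reduce the commutator $AB-BA$ to a single term proportional to $\sin(\omega t)\cos(\omega s)$.

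Second, for items \ref{Proposition3ExampleTheoremIntOpRepGenKernLpConvSeqComOp:item2} and \ref{Proposition3ExampleTheoremIntOpRepGenKernLpConvSeqComOp:item3} I would estimate $\|A_n-\tilde A\|_{L_p}$ and $\|B_n-\tilde B\|_{L_p}$ by writing the difference kernel explicitly: $(A_n-\tilde A)$ has kernel $I_{[\alpha,\beta]}(t)\theta_n\sin(\omega t)\cos(\omega s)$ and $(B_n-\tilde B)$ is a sum of two terms proportional to $\varsigma_n$. Applying H\"older's inequality in the three cases $1<p<\infty$, $p=\infty$, $p=1$ exactly as in the previous theorems yields bounds of the form $|\theta_n|\tilde\lambda$ and $|\varsigma_n|\tilde\lambda$ (with an extra factor $1+2\sigma_1/\sigma_2$ for $B_n-\tilde B$ from Minkowski's inequality). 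Since $\tilde A B=B\tilde A$ and $A\tilde B=\tilde B A$ can be obtained as limiting cases of item \ref{Proposition3ExampleTheoremIntOpRepGenKernLpConvSeqComOp:item1} (or checked directly using $\int \sin\omega s\cos\omega s\,d\mu_s=0$), the assertions $A_nB-BA_n\to 0$ and $AB_n-B_nA\to 0$ follow immediately from Lemma \ref{LemmaOnseSequenceConvOptorsCommute}.

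Third, for item \ref{Proposition3ExampleTheoremIntOpRepGenKernLpConvSeqComOp:item4}, once $A_n\to\tilde A$ and $B_n\to\tilde B$ in norm are established from items \ref{Proposition3ExampleTheoremIntOpRepGenKernLpConvSeqComOp:item2} and \ref{Proposition3ExampleTheoremIntOpRepGenKernLpConvSeqComOp:item3}, and once one knows (again from item \ref{Proposition3ExampleTheoremIntOpRepGenKernLpConvSeqComOp:item1} in the limit) that $\tilde A\tilde B=\tilde B\tilde A$, Lemma \ref{LemmaTwoOpSequencesConvOptorsCommute} delivers $\|A_nB_n-B_nA_n\|_{L_p}\to 0$ without additional work. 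For item \ref{Proposition3ExampleTheoremIntOpRepGenKernLpConvSeqComOp:item5}, the formula in item \ref{Proposition3ExampleTheoremIntOpRepGenKernLpConvSeqComOp:item1} makes $AB-BA$ a rank-one-type operator with kernel $\theta_{A,1}\sigma_1\theta_{B,3}\,I_{[\alpha,\beta]}(t)\sin(\omega t)\cos(\omega s)$, so a H\"older estimate in each of the three ranges of $p$ produces $\|(AB-BA)x\|_{L_p}\le |\theta_{A,1}\sigma_1\theta_{B,3}|\,\tilde\lambda\,\|x\|_{L_p}$, and the conclusion follows as the parameters approach $\lambda_0\in\operatorname{cl}(\Lambda)$.

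The only genuinely substantive step is the verification of item \ref{Proposition3ExampleTheoremIntOpRepGenKernLpConvSeqComOp:item1}: one must carefully expand $AB$ and $BA$ using the three-term expressions for the kernels of $A$ and $B$, then $A^2$, and collect terms according to the four basis products $\sin\!\cdot\!\cos$, $\cos\!\cdot\!\cos$, $\sin\!\cdot\!\sin$, $\cos\!\cdot\!\sin$, and check that the chosen coefficients of $A$ (namely $\theta_{A,2}=1/(\delta\sigma_2)$, $\theta_{A,3}=1/(\delta\sigma_1)$) and of $B$ (namely $\theta_{B,2}=(2\sigma_1/\sigma_2)\theta_{B,3}$) force all but the single surviving term in $AB-BA$ to cancel, and simultaneously force $AB-\delta BA^2=0$. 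All remaining parts are routine H\"older estimates of exactly the kind already carried out in the previous theorems of this section.
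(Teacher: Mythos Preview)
Your proposal is correct and follows essentially the same template as the paper's proof: item~\ref{Proposition3ExampleTheoremIntOpRepGenKernLpConvSeqComOp:item1} by direct computation, items~\ref{Proposition3ExampleTheoremIntOpRepGenKernLpConvSeqComOp:item2}--\ref{Proposition3ExampleTheoremIntOpRepGenKernLpConvSeqComOp:item3} by H\"older/Minkowski estimates on the difference kernels, item~\ref{Proposition3ExampleTheoremIntOpRepGenKernLpConvSeqComOp:item4} via Lemmas~\ref{LemmaOnseSequenceConvOptorsCommute}--\ref{LemmaTwoOpSequencesConvOptorsCommute}, and item~\ref{Proposition3ExampleTheoremIntOpRepGenKernLpConvSeqComOp:item5} by a rank-one H\"older bound on the commutator.

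There is one minor logical difference worth noting. In items~\ref{Proposition3ExampleTheoremIntOpRepGenKernLpConvSeqComOp:item2} and~\ref{Proposition3ExampleTheoremIntOpRepGenKernLpConvSeqComOp:item3} you first verify $\tilde A B=B\tilde A$ (resp.\ $A\tilde B=\tilde B A$) by specializing the commutator formula of item~\ref{Proposition3ExampleTheoremIntOpRepGenKernLpConvSeqComOp:item1} to $\theta_{A,1}=0$ (resp.\ $\theta_{B,3}=0$), and then invoke the \emph{sufficiency} direction of Lemma~\ref{LemmaOnseSequenceConvOptorsCommute} to conclude $A_nB-BA_n\to0$. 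The paper does it in the opposite order: it directly estimates $\|A_nB-BA_n\|_{L_p}\le|\theta_n\sigma_1\theta_{B,3}|\,\tilde\lambda$ (another H\"older computation, since by item~\ref{Proposition3ExampleTheoremIntOpRepGenKernLpConvSeqComOp:item1} the commutator $A_nB-BA_n$ has the single-term kernel $\theta_n\sigma_1\theta_{B,3}\,I_{[\alpha,\beta]}(t)\sin(\omega t)\cos(\omega s)$), and then invokes the \emph{necessity} direction of Lemma~\ref{LemmaOnseSequenceConvOptorsCommute} to deduce $\tilde A B=B\tilde A$. Your route is slightly more economical since it avoids repeating the H\"older estimate for the commutator; the paper's route gives an explicit quantitative rate for $\|A_nB-BA_n\|_{L_p}$. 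Either direction of Lemma~\ref{LemmaOnseSequenceConvOptorsCommute} is legitimate here, so both arguments are complete.
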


 \begin{proof}
\noindent\ref{Proposition3ExampleTheoremIntOpRepGenKernLpConvSeqComOp:item1}. It follows from direct computation.

         \noindent\ref{Proposition3ExampleTheoremIntOpRepGenKernLpConvSeqComOp:item2}.  We have
         \begin{eqnarray*}
           (A_n-\tilde A)x(t)&=&\int\limits_{\alpha_1}^{\beta_1}I_{[\alpha_1,\beta_1]}(t)\theta_n \sin(\omega t)\cos(\omega s)x(s)d\mu_s,
         \end{eqnarray*}
         for almost every $t$. Moreover, applying H\"older inequality yields, for every $x\in L_p(\mathbb{R},\mu)$, $1< p<\infty $,
         \begin{align*}
         & \|(A_n-\tilde A)x\|^p_{L_p}=  \int\limits_{\mathbb{R}} \big| \int\limits_{\alpha_1}^{\beta_1}I_{[\alpha,\beta]}(t)\theta_n \sin(\omega t)\cos(\omega s)x(s)d\mu_s\big|^pd\mu_t\\
         & \leq |\theta_n|^p |\beta-\alpha| \big|\int\limits_{\mathbb{R}} I_{[\alpha_1,\beta_1]}(s) \sin(\omega s)x(s)d\mu_s\big|^p \leq |\theta_n|^p |\beta-\alpha||\beta_1-\alpha_1|^{\frac{p}{q}}\|x\|^p_{L_p}.
         \end{align*}
         Therefore, for $1< p<\infty $, we have
         \begin{equation*}
          \|(A_n-\tilde A)\|_{L_p}\le  |\theta_n|\cdot|\beta-\alpha|^{\frac{1}{p}}\cdot|\beta_1-\alpha_1|^{\frac{1}{q}}\to 0,
         \end{equation*}
         when $n\to \infty$, since $\theta_n\to 0$. Applying H\"older inequality yields, for every $x\in L_\infty(\mathbb{R},\mu)$,
         \begin{eqnarray*}
         && \|(A_n-\tilde A) x\|_{L_\infty}= \mathop{\esssup}_{t\in \mathbb{R}} \big|\ \int\limits_{\alpha_1}^{\beta_1}I_{[\alpha,\beta]}(t)\theta_n \sin(\omega t)\cos(\omega s)x(s)d\mu_s\big|
         \\
         &&
         \leq
         |\theta_n|\cdot \int\limits_{\mathbb{R}} \left|I_{[\alpha_1,\beta_1]}(s) \sin(\omega s)\right|d\mu_s\cdot \|x\|_{L_\infty} \le |\theta_n|\cdot|\beta_1-\alpha_1|\cdot\|x\|_{L_\infty}.
         \end{eqnarray*}
          Therefore, for $ p=\infty $,
         \begin{equation*}
          \|A_n-\tilde A\|_{L_\infty}\le  |\theta_n|\cdot|\beta_1-\alpha_1|\to 0,
         \end{equation*}
         when $n\to\infty$ since $\theta_n\to 0$.
         For every $x\in L_1(\mathbb{R},\mu)$,
         \begin{eqnarray*}
         && \|(A_n-\tilde A) x\|_{L_1}= \int\limits_{\mathbb{R}} \big|\ \int\limits_{\alpha_1}^{\beta_1}I_{[\alpha,\beta]}(t)\theta_n \sin(\omega t)\cos(\omega s)x(s)d\mu_s\big|d\mu_t
         \\
         &&
         \leq
         |\theta_n|\cdot |\beta-\alpha|\int\limits_{\mathbb{R}} |I_{[\alpha_1,\beta_1]}(s) \sin(\omega s)x(s)|d\mu_s \leq |\theta_n|\cdot|\beta-\alpha|\cdot\|x\|_{L_1}.
         \end{eqnarray*}
          Therefore, for $ p=1 $,
         \begin{equation*}
          \|A_n-\tilde A\|_{L_1}\le  |\theta_n|\cdot|\beta_1-\alpha_1|\to 0,
         \end{equation*}
         when $n\to\infty$ since $\theta_n\to 0$.
         Therefore, $A_n\to\tilde A$ converge in norm in $L_p$, $1\le p\le \infty$.

         Now we prove the convergence for the sequence $\{A_nB-BA_n\}$. Applying H\"older inequality yields, for all $x\in L_p(\mathbb{R},\mu)$, $1< p<\infty $,
         \begin{align*}
          & \resizebox{0.85\hsize}{!}{$\|(A_nB-BA_n)x\|^p_{L_p}= \big|\theta_{n}\sigma_1 \theta_{B,3}\int\limits_{\alpha_1}^{\beta_1} I_{[\alpha,\beta]}(t)\sin(\omega t)\cos(\omega s)x(s)d\mu_s\big|^p d\mu_t$}
            \\
          & \resizebox{1\hsize}{!}{$\displaystyle \leq  |\theta_{n}\sigma_1 \theta_{B,3}|^p
           |\beta-\alpha|
           \big|\, \int\limits_{\mathbb{R}} I_{[\alpha_1,\beta_1]}(s)\cos(\omega s)x(s)d\mu_s\big|^p
             \leq
           |\theta_{n}\sigma_1 \theta_{B,3}|^p |\beta-\alpha|
            |\beta_1-\alpha_1|^{\frac{p}{q}}\|x\|^p. $}
         \end{align*}
         Therefore, for $1< p <\infty$ we have
         \begin{equation*}
         \|A_nB-BA_n\|_{L_p} \le  \left|\theta_{n}\sigma_1 \theta_{B,3}\right|\cdot
         |\beta-\alpha|^{\frac{1}{p}}\cdot |\beta_1-\alpha_1|^{\frac{1}{q}}\to 0,\, n\to\infty,
         \end{equation*}
         since $\theta_n\to 0$.
         Applying H\"older inequality yields, for  all $x\in L_\infty(\mathbb{R},\mu)$,
            \begin{eqnarray*}
           &&\|(A_nB-BA_n)x\|_{L_\infty}=\mathop{\esssup}_{t\in \mathbb{R}} \big|\theta_{n}\sigma_1 \theta_{B,3} I_{[\alpha,\beta]}(t)\sin(\omega t)\cdot \int\limits_{\alpha_1}^{\beta_1}\cos(\omega s)x(s)d\mu_s\big|\\
          &&\leq  |\theta_{n}\sigma_1 \theta_{B,3}| \int\limits_{\mathbb{R}}| I_{[\alpha_1,\beta_1]}(s) \cos(\omega s)|d\mu_s \|x\|_{L_\infty}
          \leq  |\theta_{n}\sigma_1 \theta_{B,3}|\cdot |\beta_1-\alpha_1|\|x\|_{L_\infty}.
         \end{eqnarray*}
         Therefore, for $p=\infty$, we have
           \begin{equation*}
           \|A_nB-BA_n\|_{L_\infty}\le \left|\theta_{n}\sigma_1 \theta_{B,3}\right|\cdot |\beta_1-\alpha_1|\to 0,\, n\to\infty,
         \end{equation*}
         because $\theta_n\to 0$.
          For  all $x\in L_1(\mathbb{R},\mu)$,
            \begin{align*}
           &\|(A_nB-BA_n)x\|_{L_1}=\int\limits_{\mathbb{R}} \big|\theta_{n}\sigma_1 \theta_{B,3} I_{[\alpha,\beta]}(t)\sin(\omega t)\cdot \int\limits_{\alpha_1}^{\beta_1}\cos(\omega s)x(s)d\mu_s\big|d\mu_t\\
          &\leq  |\theta_{n}\sigma_1 \theta_{B,3} |\cdot|\beta-\alpha| \int\limits_{\mathbb{R}}| I_{[\alpha_1,\beta_1]}(s) \cos(\omega s)x(s)|d\mu_s
          \leq  \left|\theta_{n}\sigma_1 \theta_{B,3}\right|\cdot |\beta-\alpha|\|x\|_{L_1}.
         \end{align*}
         Therefore, for $p=1$, we have
           \begin{equation*}
           \|A_nB-BA_n\|_{L_1}\le \left|\theta_{n}\sigma_1 \theta_{B,3}\right|\cdot |\beta-\alpha|\to 0,\, n\to\infty,
         \end{equation*}
         because $\theta_n\to 0$.
          Since the sequence of operators  $\{A_n\}$  converges in norm to $\tilde{A}$, then by Lemma \ref{LemmaOnseSequenceConvOptorsCommute}, $\tilde{A}$ commutes with $B$.

         \noindent\ref{Proposition3ExampleTheoremIntOpRepGenKernLpConvSeqComOp:item3}.
         We have
         \begin{eqnarray*}
          && (B_n-\tilde B)x(t)=\int\limits_{\alpha_1}^{\beta_1}I_{[\alpha_1,\beta_1]}(t) \frac{2\sigma_1}{\sigma_2}\varsigma_n \cos(\omega t)\cos(\omega s)x(s)d\mu_s\\
          && + \int\limits_{\alpha_1}^{\beta_1}I_{[\alpha_1,\beta_1]}(t)\varsigma_{n}\sin(\omega t)\sin(\omega s)x(s)d\mu_s
          =(B_n-\tilde B)_1x(t)+(B_n-\tilde B)_2x(t),
         \end{eqnarray*}
         respectively, for almost every $t$. Moreover, by applying H\"older inequality we  have for all $x\in L_p(\mathbb{R},\mu)$, $1< p<\infty $ the following estimate
         \begin{eqnarray*}
         &&\resizebox{0.85\hsize}{!}{$\displaystyle \|(B_n-\tilde B)_1x\|^p_{L_p}=  \int\limits_{\mathbb{R}} \big| \, \int\limits_{\alpha_1}^{\beta_1}I_{[\alpha,\beta]}(t)\varsigma_n \frac{2\sigma_1}{\sigma_2} \cos(\omega t)\cos(\omega s)x(s)d\mu_s\big|^pd\mu_t$}\\
         && \resizebox{1\hsize}{!}{$\displaystyle \leq \left|\varsigma_n \frac{2\sigma_1}{\sigma_2}\right|^p |\beta-\alpha| \big|\int\limits_{\mathbb{R}} I_{[\alpha,\beta]}(s) \cos(\omega s)x(s)d\mu_s\big|^p \leq\left|\varsigma_n \frac{2\sigma_1}{\sigma_2}\right|^p |\beta-\alpha||\beta_1-\alpha_1|^{\frac{p}{q}}\|x\|^p_{L_p}.$}
         \end{eqnarray*}
         Therefore, for $1< p<\infty $, we get the following norm estimate
         \begin{equation*}
          \|(B_n-\tilde B)_1\|_{L_p}\le  \left|\varsigma_n \frac{2\sigma_1}{\sigma_2}\right|\cdot|\beta-\alpha|^{\frac{1}{p}}\cdot|\beta_1-\alpha_1|^{\frac{1}{q}}.
         \end{equation*}
         Applying H\"older inequality yields, for all $x\in L_\infty(\mathbb{R},\mu)$, \begin{eqnarray*}
         && \hspace{-0.1cm} \|(B_n-\tilde B)_1x\|_{L_\infty}= \mathop{\esssup}_{t\in \mathbb{R}} \big| \int\limits_{\alpha_1}^{\beta_1}I_{[\alpha,\beta]}(t)\varsigma_n \frac{2\sigma_1}{\sigma_2} \cos(\omega t)\cos(\omega s)x(s)d\mu_s\big|\\
         && \hspace{-0.1cm} \leq \left|\varsigma_n \frac{2\sigma_1}{\sigma_2}\right|\cdot \int\limits_{\mathbb{R}} |I_{[\alpha_1,\beta_1]}(s) \cos(\omega s)|d\mu_s\cdot \|x\|_{L_\infty} \le \left|\varsigma_n \frac{2\sigma_1}{\sigma_2}\right|\cdot|\beta_1-\alpha_1|\cdot\|x\|_{L_\infty}.
         \end{eqnarray*}
          Therefore, for $p=\infty $, we have the following norm estimate
         \begin{equation*}
          \|(B_n-\tilde B)_1\|_{L_\infty}\le  \left|\varsigma_n \frac{2\sigma_1}{\sigma_2}\right|\cdot|\beta_1-\alpha_1|.
         \end{equation*}
         For all $x\in L_1(\mathbb{R},\mu)$,
         \begin{eqnarray*}
         && \hspace{-0.1cm} \|(B_n-\tilde B)_1x\|_{L_1}= \int\limits_{\mathbb{R}} \big| \int\limits_{\alpha_1}^{\beta_1}I_{[\alpha,\beta]}(t)\varsigma_n \frac{2\sigma_1}{\sigma_2} \cos(\omega t)\cos(\omega s)x(s)d\mu_s\big|d\mu_t\\
         && \hspace{-0.1cm} \leq \big|\varsigma_n \frac{2\sigma_1}{\sigma_2}\big|\cdot \int\limits_{\mathbb{R}} |I_{[\alpha_1,\beta_1]}(s) \cos(\omega s)x(s)|d\mu_s \le \big|\varsigma_n \frac{2\sigma_1}{\sigma_2}\big|\cdot|\beta-\alpha|\cdot\|x\|_{L_1}.
         \end{eqnarray*}
          Therefore, for $ p=1 $, we have the following norm estimate
         \begin{equation*}
          \|(B_n-\tilde B)_1\|_{L_1}\leq  \big|\varsigma_n \frac{2\sigma_1}{\sigma_2}\big|\cdot|\beta-\alpha|.
         \end{equation*}
         Analogously, we  get, for  $1< p<\infty $, the estimate
         \begin{equation*}
          \|(B_n-\tilde B)_2\|_{L_p}\le  \left|\varsigma_{n}\right| \cdot|\beta-\alpha|^{\frac{1}{p}}\cdot|\beta_1-\alpha_1|^{\frac{1}{q}}.
         \end{equation*}
         For $p=\infty$,
         \begin{equation*}
          \|(B_n-\tilde B)_2\|_{L_\infty}\leq  \left|\varsigma_{n}\right| \cdot|\beta_1-\alpha_1|.
         \end{equation*}
          For $p=1$,
         \begin{equation*}
          \|(B_n-\tilde B)_2\|_{L_1}\leq  \left|\varsigma_{n}\right| \cdot|\beta-\alpha|.
         \end{equation*}
         Therefore, by Minkowski inequality \cite{AdamsG,FollandRA,KolmogorovVol2} we have, for $1< p <\infty$,
         \begin{eqnarray*}
             \|B_n-\tilde B\|_{L_p}&\leq & \|(B_n-\tilde B)_1\|_{L_p}+\|(B_n-\tilde B)_2\|_{L_p} \\
             &\leq & |\varsigma_n |\cdot |\beta-\alpha|^{\frac{1}{p}}\cdot|\beta_1-\alpha_1|^{\frac{1}{q}}  \left(\frac{2|\sigma_1|}{|\sigma_2|}+
              1 \right)\to 0,
         \end{eqnarray*}
when $n\to \infty$, since $\varsigma_n\to 0$. On the other hand, by triangle inequality, for $p=\infty$ and for
 $p=1$,
         \begin{align*}
          &   \|B_n-\tilde B\|_{L_\infty}\leq  \|(B_n-\tilde B)_1\|_{L_\infty}+\|(B_n-\tilde B)_2\|_{L_\infty} \leq |\varsigma_n| \cdot|\beta_1-\alpha_1|\cdot  \left(\frac{2|\sigma_1|}{|\sigma_2|}+
              1  \right)\to 0,
          \\
           &   \|B_n-\tilde B\|_{L_1}\leq  \|(B_n-\tilde B)_1\|_{L_1}+\|(B_n-\tilde B)_2\|_{L_1} \leq |\varsigma_n| \cdot|\beta-\alpha|\cdot  \left(\frac{2|\sigma_1|}{|\sigma_2|}+
              1  \right)\to 0,
         \end{align*}
         when $n\to \infty$ since $\varsigma_n\to 0$. Therefore, $B_n$ converges in norm to $\tilde{B}$ in $L_p$, $1\leq p\leq \infty$.

         In order to prove that $\{AB_n-B_nA\}$ converges in norm to 0. We apply the same procedure as on item \ref{Proposition3ExampleTheoremIntOpRepGenKernLpConvSeqComOp:item2} for $1< p<\infty $, so we get for $1< p<\infty$
         \begin{equation*}
           \|AB_n-B_nA\|_{L_p}\le \left|\theta_{A,1}\sigma_1 \varsigma_n\right|\cdot |\beta-\alpha|^{\frac{1}{p}}\cdot|\beta_1-\alpha_1|^{\frac{1}{q}}\to 0,\, n\to\infty,
         \end{equation*}
         since $\varsigma_n\to 0$. For $p=\infty$ and $p=1$ we get
           \begin{align*}
          & \|AB_n-B_nA\|_{L_\infty}\le \left|\theta_{A,1}\sigma_1 \varsigma_n\right|\cdot |\beta_1-\alpha_1|\to 0,\, n\to\infty,\\
          & \|AB_n-B_nA\|_{L_1}\leq \left|\theta_{A,1}\sigma_1 \varsigma_n\right|\cdot |\beta-\alpha|\to 0,\, n\to\infty,
         \end{align*}
         because $\varsigma_n\to 0$.  Since the sequence of operators  $\{B_n\}$ converges in norm to $\tilde{B}$, then by Lemma \ref{LemmaOnseSequenceConvOptorsCommute}, $\tilde{B}$ commutes with $A$.

        \noindent\ref{Proposition3ExampleTheoremIntOpRepGenKernLpConvSeqComOp:item4}. The conclusion follows from
         items  \ref{Proposition3ExampleTheoremIntOpRepGenKernLpConvSeqComOp:item2}, \ref{Proposition3ExampleTheoremIntOpRepGenKernLpConvSeqComOp:item3}, Lemma \ref{LemmaOnseSequenceConvOptorsCommute} and Lemma \ref{LemmaTwoOpSequencesConvOptorsCommute}.

         \noindent\ref{Proposition3ExampleTheoremIntOpRepGenKernLpConvSeqComOp:item5}. By following the same procedure as on item \ref{Proposition3ExampleTheoremIntOpRepGenKernLpConvSeqComOp:item2}, the operator $AB-BA$ has the following estimation. For all $x\in L_p(\mathbb{R},\mu)$ and for all
         $\resizebox{0.45\hsize}{!}{$\displaystyle (\theta_{A,1},\theta_{B,1}, \theta_{B,3}, \alpha_1,\beta_1,\alpha,\beta,\sigma_1,\sigma_2,\delta,\omega)\in\Lambda $}$ we have
         \begin{equation*}
           \|(AB-BA)x\|_{L_p}\leq |\theta_{A,1}\sigma_1 \theta_{B,3}|\cdot \tilde{\lambda} \cdot \|x\|_{L_p}\to 0,
         \end{equation*}
           when   $\theta_{A,1}\sigma_1 \theta_{B,3}\tilde{\lambda}\to 0$, where $\tilde{\lambda}=|\beta_1-\alpha_1|^{\frac{1}{p}}\cdot|\beta_1-\alpha_1|^{\frac{1}{q}}$, if $1< p < \infty$, $\tilde{\lambda}=|\beta_1-\alpha_1|$, if $p=1$ and
           $\tilde{\lambda}=|\beta-\alpha|$, if $p=1$.
    \QEDB
   \end{proof}

 \begin{theorem}
 Let $A:L_p(\mathbb{R},\mu)\to L_p(\mathbb{R},\mu)$, $B:L_p(\mathbb{R},\mu)\to L_p(\mathbb{R},\mu)$, $1\le p\le \infty$ be operators defined as
 \begin{eqnarray*}
         (A x)(t)&=&\int\limits_{\alpha_1}^{\beta_1}I_{[\alpha,\beta]}(t)\big(\theta_{A,1}\sin(\omega t)\cos(\omega s)+ \theta_{A,2}\cos(\omega t)\cos(\omega s)  \\
         & &  + \frac{\delta\theta_{A,2}^2\sigma_2^2}{\sigma_1}\sin(\omega t)\sin(\omega s)\big)x(s)d\mu_s,\\
         (B x)(t)&=&\int\limits_{\alpha_1}^{\beta_1}I_{[\alpha,\beta]}(t)\theta_{B,1}
         \sin(\omega t)\cos(\omega s)x(s)d\mu_s
        \end{eqnarray*}
        for almost every $t$, where $\theta_{A,1}, \theta_{A,2},\, \omega\in\mathbb{R}$,  $\delta\in\mathbb{R}\setminus\{0\}$, $\alpha,\beta,\alpha_1,\beta_1 \in\mathbb{R}$, $\alpha_1<\beta_1$, $\alpha\le \alpha_1$, $\beta\geq \beta_1$ and, either the number $\frac{\omega }{\pi}(\beta_1-\alpha_1)\in \mathbb{Z}$ or $\frac{\omega }{\pi}(\beta_1+\alpha_1)\in \mathbb{Z}$,
         $\sigma_1,\sigma_2\in\mathbb{R}\setminus\{0\}$ are defined in \eqref{ConstantSigma1Case2Omega} and \eqref{ConstantSigma2Case2Omega}, respectively.
        Let $\{A_n\}: L_p(\mathbb{R},\mu)\to L_p(\mathbb{R},\mu)$, $\{B_n\}: L_p(\mathbb{R},\mu)\to L_p(\mathbb{R},\mu)$ $1\le p \le \infty$ be sequences of operators
        \begin{eqnarray*}
         (A_n x)(t)&=&\int\limits_{\alpha_1}^{\beta_1}I_{[\alpha,\beta]}(t)\big(\theta_{A,1}\sin(\omega t)\cos(\omega s)+ \theta_{n}\cos(\omega t)\cos(\omega s)  \\
         & &   + \frac{\delta\theta_{n}^2\sigma_2^2}{\sigma_1}\sin(\omega t)\sin(\omega s)\big)x(s)d\mu_s,\\
         (B_n x)(t)&=&\int\limits_{\alpha_1}^{\beta_1}I_{[\alpha,\beta]}(t)\varsigma_n
         \sin(\omega t)\cos(\omega s)x(s)d\mu_s,
        \end{eqnarray*}
        for almost every $t$, $\{\theta_n\}$ and $\{\varsigma_n\}$ are number sequences. Let
         \begin{eqnarray*}
          \Lambda&=&\big\{(\theta_{A,1},\theta_{A,2},\theta_{B,1}, \alpha_1,\beta_1,\alpha,\beta,\sigma_1,\sigma_2,\delta,\omega)\in\mathbb{R}^{11}:\, \delta\not=0,\, \alpha \leq \alpha_1,\ \beta\geq \beta_1,\\
         &&\hspace{-1mm}  \int\limits_{\alpha_1}^{\beta_1}\sin(\omega s)\cos(\omega s)d\mu_s=0,  \sigma_1=\int\limits_{\alpha_1}^{\beta_1}(\sin(\omega s))^2d\mu_s\not=0,  \sigma_2=\beta_1-\alpha_1-\sigma_1\not=0 \big\}.
         \end{eqnarray*}
          Then
       \begin{enumerate}[label=\textup{\arabic*.}, ref=\arabic*]
        \item\label{PropositionExampleTheoremIntOpRepGenKernLpConvSeqComOp:item1}  $AB=\delta BA^2$,  $A_nB=\delta BA_n^2$, $A_nB_n=\delta B_nA_n^2 $, $AB_n=\delta B_nA^2 $ for each positive integer $n$. Moreover, for all $x\in L_p(\mathbb{R},\mu)$, $1\le p\le \infty$ we have
        \begin{equation*}
         (AB-BA)x(t)=\theta_{A,2}\sigma_2 \theta_{B,1}(\delta\theta_{A,2}\sigma_2-1)\int\limits_{\alpha_1}^{\beta_1} I_{[\alpha,\beta]}(t)\sin(\omega t)\cos(\omega s)x(s)d\mu_s,
         \end{equation*}
        for almost every $t$.

         \item\label{PropositionExampleTheoremIntOpRepGenKernLpConvSeqComOp:item2} if $\theta_n\to 0$ when $n\to \infty$, then $A_n\to \bar{A}$ (converge in norm)
               defined as
            \begin{equation*}
             (\bar{A} x)(t)=\int\limits_{\alpha_1}^{\beta_1}I_{[\alpha,\beta]}(t)\theta_{A,1}\sin(\omega t)\cos(\omega s)x(s)d\mu_s,
            \end{equation*}
           for almost every $t$, so $A_nB-BA_n\to 0$ (converge in norm) and $\bar{A}B=B\bar{A}$;
           \item\label{PropositionExampleTheoremIntOpRepGenKernLpConvSeqComOp:item3} if $\theta_n\to \frac{1}{\delta \sigma_2}$ when $n\to \infty$, then $A_n\to \tilde{A}$ (converge in norm) defined as
          \begin{eqnarray*}
             (\tilde{A} x)(t)&=&\int\limits_{\alpha_1}^{\beta_1}I_{[\alpha,\beta]}(t)\big(\theta_{A,1}\sin(\omega t)\cos(\omega s)+ \frac{1}{\delta \sigma_2}\cos(\omega t)\cos(\omega s)  \\
            & &  + \frac{1}{\delta\sigma_1}\sin(\omega t)\sin(\omega s)\big)x(s)d\mu_s,
            \end{eqnarray*}
            for almost every $t$, so $A_nB-BA_n\to 0$ (converge in norm) and $\tilde{A}B=B\tilde{A}$;
          \item\label{PropositionExampleTheoremIntOpRepGenKernLpConvSeqComOp:item4} if $\varsigma_n\to 0$ when $n\to \infty$, then $B_n\to 0$ (converge in norm) and so $AB_n-B_nA\to 0$ (converge in norm);
           \item\label{PropositionExampleTheoremIntOpRepGenKernLpConvSeqComOp:item5} if (either $\theta_n\to 0$ or $\theta_n\to \frac{1}{\delta \sigma_2}$) and $\varsigma_n\to 0$ when $n\to \infty$, then $A_nB_n -B_nA_n \to 0$ (converge in norm).
           \item\label{PropositionExampleTheoremIntOpRepGenKernLpConvSeqComOp:item6} if $\theta_{A,2}\sigma_2 \theta_{B,1}(\delta\theta_{A,2}\sigma_2-1)\tilde{\lambda}\to 0$ in $\mathbb{R}$ when $$(\theta_{A,1},\theta_{A,2},\theta_{B,1}, \alpha_1,\beta_1,\alpha,\beta,\sigma_1,\sigma_2,\delta,\omega)\to \lambda_0\in cl(\Lambda)
               \mbox{ (the closure of } \Lambda),
               $$
                where
               $(\theta_{A,1},\theta_{A,2},\theta_{B,1}, \alpha_1,\beta_1,\alpha,\beta,\sigma_1,\sigma_2,\delta,\omega)\in\Lambda$ and, $\tilde{\lambda}=|\beta-\alpha|^{\frac{1}{p}}|\beta_1-\alpha_1|^{\frac{1}{q}}$ for $1<p < \infty$ with $\frac{1}{p}+\frac{1}{q}=1$, $\tilde{\lambda}=|\beta_1-\alpha_1|$ for $p=\infty$ and $\tilde{\lambda}=|\beta-\alpha|$ for $p=1$,  then for all $x\in L_p(\mathbb{R},\mu)$, $1\leq p\leq \infty$, it holds that $\|(AB-BA)x\|_{L_p}\to 0$. 
           \end{enumerate}
         \end{theorem}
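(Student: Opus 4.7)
The plan is to follow the same pattern employed in the preceding theorems of this section. First I would compute $AB$ and $BA$ by direct substitution using Proposition \ref{PropSimilarthmBothIntOpGensepKernellsCommutativityCommutator}. The key simplification is the orthogonality $\int_{\alpha_1}^{\beta_1}\sin(\omega s)\cos(\omega s)\,d\mu_s=0$, together with the identities $\sigma_1=\int_{\alpha_1}^{\beta_1}\sin^2(\omega s)\,d\mu_s$ and $\sigma_2=\int_{\alpha_1}^{\beta_1}\cos^2(\omega s)\,d\mu_s$, which eliminate every mixed sine/cosine pairing and leave only $Q_{[\alpha_1,\beta_1]}(\sin,\sin)=\sigma_1$ and $Q_{[\alpha_1,\beta_1]}(\cos,\cos)=\sigma_2$. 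Careful bookkeeping of the surviving terms yields the identity $AB=\delta BA^2$ together with the explicit formula for $(AB-BA)x$ stated in item \ref{PropositionExampleTheoremIntOpRepGenKernLpConvSeqComOp:item1}; the analogous verifications of $A_nB=\delta BA_n^2$, $AB_n=\delta B_nA^2$ and $A_nB_n=\delta B_nA_n^2$ are formally identical with $\theta_{A,2}$ replaced by $\theta_n$ and/or $\theta_{B,1}$ replaced by $\varsigma_n$.

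For items \ref{PropositionExampleTheoremIntOpRepGenKernLpConvSeqComOp:item2} and \ref{PropositionExampleTheoremIntOpRepGenKernLpConvSeqComOp:item3} I would write out the difference kernels of $A_n-\bar A$ and $A_n-\tilde A$ explicitly. In the first case the difference consists of the two terms $\theta_n\cos(\omega t)\cos(\omega s)$ and $\frac{\delta\theta_n^2\sigma_2^2}{\sigma_1}\sin(\omega t)\sin(\omega s)$, both of which vanish as $\theta_n\to 0$; in the second case the surviving prefactors $\theta_n-\frac{1}{\delta\sigma_2}$ and $\frac{\delta\sigma_2^2}{\sigma_1}\bigl(\theta_n^2-\frac{1}{\delta^2\sigma_2^2}\bigr)$ both tend to zero as $\theta_n\to\frac{1}{\delta\sigma_2}$. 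Each term is controlled by H\"older's inequality for $1<p<\infty$, by the essential-supremum bound for $p=\infty$ and by a direct computation for $p=1$; summing via the Minkowski inequality yields $A_n\to\bar A$ or $A_n\to\tilde A$ in operator norm. The convergence $A_nB-BA_n\to 0$ together with the commutativities $\bar AB=B\bar A$ and $\tilde AB=B\tilde A$ then follows at once from Lemma \ref{LemmaOnseSequenceConvOptorsCommute}.

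Item \ref{PropositionExampleTheoremIntOpRepGenKernLpConvSeqComOp:item4} is the easiest: since $B_n$ has a single-term kernel with prefactor $\varsigma_n$, one obtains $\|B_n\|_{L_p}\le|\varsigma_n|\cdot\tilde\lambda\to 0$ in all three regimes of $p$, and therefore $\|AB_n-B_nA\|_{L_p}\le 2\|A\|_{L_p}\|B_n\|_{L_p}\to 0$. Item \ref{PropositionExampleTheoremIntOpRepGenKernLpConvSeqComOp:item5} combines items \ref{PropositionExampleTheoremIntOpRepGenKernLpConvSeqComOp:item2} or \ref{PropositionExampleTheoremIntOpRepGenKernLpConvSeqComOp:item3} with item \ref{PropositionExampleTheoremIntOpRepGenKernLpConvSeqComOp:item4} through Lemma \ref{LemmaTwoOpSequencesConvOptorsCommute}: the limits $\bar A$ or $\tilde A$ and $0$ trivially commute, so $A_nB_n-B_nA_n$ converges to zero in norm. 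Finally, item \ref{PropositionExampleTheoremIntOpRepGenKernLpConvSeqComOp:item6} follows from the explicit formula in item \ref{PropositionExampleTheoremIntOpRepGenKernLpConvSeqComOp:item1} by one more application of H\"older's inequality, producing the bound $\|(AB-BA)x\|_{L_p}\le|\theta_{A,2}\sigma_2\theta_{B,1}(\delta\theta_{A,2}\sigma_2-1)|\cdot\tilde\lambda\cdot\|x\|_{L_p}$, which tends to zero under the stated limit.

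The main technical obstacle lies squarely in item \ref{PropositionExampleTheoremIntOpRepGenKernLpConvSeqComOp:item1}. One must verify, through the explicit compositions of $A^2$, $BA^2$ and $AB$, that the specific choice of the third coefficient $\frac{\delta\theta_{A,2}^2\sigma_2^2}{\sigma_1}$ in the kernel of $A$ is precisely what is needed for $AB-\delta BA^2=0$ to hold identically on $L_p(\mathbb{R},\mu)$, and that after all cancellations the commutator $(AB-BA)x$ reduces to the single residual mode $\theta_{A,2}\sigma_2\theta_{B,1}(\delta\theta_{A,2}\sigma_2-1)\sin(\omega t)\cos(\omega s)$ inside the integral. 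Once this algebraic identity is firmly established, items \ref{PropositionExampleTheoremIntOpRepGenKernLpConvSeqComOp:item2}--\ref{PropositionExampleTheoremIntOpRepGenKernLpConvSeqComOp:item6} become essentially routine variations of the H\"older--Minkowski technique already developed in the preceding theorems of this section.
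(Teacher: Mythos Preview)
Your proposal is correct and follows essentially the same pattern as the paper's proof: direct computation for item \ref{PropositionExampleTheoremIntOpRepGenKernLpConvSeqComOp:item1}, H\"older/Minkowski estimates on the difference kernels for the norm convergences, and Lemmas \ref{LemmaOnseSequenceConvOptorsCommute}--\ref{LemmaTwoOpSequencesConvOptorsCommute} for the commutator conclusions. Two small points of divergence: in items \ref{PropositionExampleTheoremIntOpRepGenKernLpConvSeqComOp:item2}--\ref{PropositionExampleTheoremIntOpRepGenKernLpConvSeqComOp:item3} the paper does not invoke Lemma \ref{LemmaOnseSequenceConvOptorsCommute} to \emph{obtain} $A_nB-BA_n\to 0$ but instead computes this directly from the explicit commutator formula of item \ref{PropositionExampleTheoremIntOpRepGenKernLpConvSeqComOp:item1} (with $\theta_n$ in place of $\theta_{A,2}$, the prefactor $\theta_n\sigma_2\theta_{B,1}(\delta\theta_n\sigma_2-1)\to 0$) and only then uses the lemma to conclude $\bar AB=B\bar A$, $\tilde AB=B\tilde A$; your phrasing leaves it ambiguous which direction of the if-and-only-if you intend, so make sure you supply one of the two inputs. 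In item \ref{PropositionExampleTheoremIntOpRepGenKernLpConvSeqComOp:item4} your crude bound $2\|A\|\|B_n\|$ works and is shorter than the paper's explicit estimate via the commutator formula.
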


        \begin{proof}
         \ref{PropositionExampleTheoremIntOpRepGenKernLpConvSeqComOp:item1}. It follows from direct computation.

         \noindent\ref{PropositionExampleTheoremIntOpRepGenKernLpConvSeqComOp:item2}.
         We have
         \begin{eqnarray*}
           && (A_n-\bar A)x(t)=\int\limits_{\alpha_1}^{\beta_1}I_{[\alpha_1,\beta_1]}(t)\theta_n \cos(\omega t)\cos(\omega s)x(s)d\mu_s\\
           &&\
          + \int\limits_{\alpha_1}^{\beta_1}I_{[\alpha_1,\beta_1]}(t)\frac{\delta\theta^2_{n}\sigma_2^2}{\sigma_1}\sin(\omega t)\sin(\omega s)x(s)d\mu_s
          =(A_n-\bar A)_1x(t)+(A_n-\bar A)_2x(t),
         \end{eqnarray*}
         respectively, for almost every $t$. Moreover, by applying H\"older inequality we  have for all $x\in L_p(\mathbb{R},\mu)$, $1< p<\infty $ the following estimations:
         \begin{eqnarray*}
         && \|(A_n-\bar A)_1x\|^p_{L_p}=  \int\limits_{\mathbb{R}} \left| \int\limits_{\alpha_1}^{\beta_1}I_{[\alpha,\beta]}(t)\theta_n \cos(\omega t)\cos(\omega s)x(s)d\mu_s\right|^pd\mu_t\\
         &&\leq |\theta_n|^p |\beta-\alpha| \big|\int\limits_{\mathbb{R}} I_{[\alpha,\beta]}(s) \cos(\omega s)x(s)d\mu_s\big|^p \hspace{-1mm}\leq |\theta_n|^p |\beta-\alpha||\beta_1-\alpha_1|^{\frac{p}{q}}\|x\|^p_{L_p}.
         \end{eqnarray*}
         Therefore, for $1< p<\infty $ we have the following norm estimation:
         \begin{equation*}
          \|(A_n-\bar A)_1\|_{L_p}\le  |\theta_n|\cdot|\beta-\alpha|^{\frac{1}{p}}\cdot|\beta_1-\alpha_1|^{\frac{1}{q}}.
         \end{equation*}
         By applying H\"older inequality we have for all $x\in L_\infty(\mathbb{R},\mu)$ the following:
         \begin{eqnarray*}
         && \|(A_n-\bar A)_1x\|_{L_\infty}= \mathop{\esssup}_{t\in \mathbb{R}} \left| \int\limits_{\alpha_1}^{\beta_1}I_{[\alpha,\beta]}(t)\theta_n \cos(\omega t)\cos(\omega s)x(s)d\mu_s\right|
         \\ &&\leq
         |\theta_n|\cdot \int\limits_{\mathbb{R}} \left|I_{[\alpha_1,\beta_1]}(s) \cos(\omega s)\right|d\mu_s\cdot \|x\|_{L_\infty} \le |\theta_n|\cdot|\beta_1-\alpha_1|\cdot\|x\|_{L_\infty}.
         \end{eqnarray*}
          Therefore, for $ p=\infty $ we have the following norm estimation:
         \begin{equation*}
          \|(A_n-\bar A)_1\|_{L_\infty}\le  |\theta_n|\cdot|\beta_1-\alpha_1|.
         \end{equation*}
         For all $x\in L_1(\mathbb{R},\mu)$,
         \begin{align*}
         & \|(A_n-\bar A)_1x\|_{L_1}= \int\limits_{\mathbb{R}} \big| \int\limits_{\alpha_1}^{\beta_1}I_{[\alpha,\beta]}(t)\theta_n \cos(\omega t)\cos(\omega s)x(s)d\mu_s\big|d\mu_t
         \\
         &
         \leq
         |\theta_n|\cdot |\beta-\alpha|\int\limits_{\mathbb{R}} |I_{[\alpha_1,\beta_1]}(s) \cos(\omega s)x(s)|d\mu_s \leq |\theta_n|\cdot|\beta-\alpha|\cdot\|x\|_{L_1}.
         \end{align*}
          Therefore, for $ p=1 $ we have the following norm estimation:
         \begin{equation*}
          \|(A_n-\bar A)_1\|_{L_1}\leq  |\theta_n|\cdot|\beta-\alpha|.
         \end{equation*}
         Analogously, we  have for  $1< p<\infty $ the following estimations:
         \begin{equation*}
          \|(A_n-\bar A)_2\|_{L_p}\le  \left|\frac{\delta\theta^2_{n}\sigma_2^2}{\sigma_1}\right| \cdot|\beta-\alpha|^{\frac{1}{p}}\cdot|\beta_1-\alpha_1|^{\frac{1}{q}}.
         \end{equation*}
         For $p=\infty$ and $p=1$ we have
         \begin{equation}
          \|(A_n-\bar A)_2\|_{L_\infty}\le  \left|\frac{\delta\theta^2_{n}\sigma_2^2}{\sigma_1}\right| \cdot|\beta_1-\alpha_1|\
          \|(A_n-\bar A)_2\|_{L_1}\le  \left|\frac{\delta\theta^2_{n}\sigma_2^2}{\sigma_1}\right| \cdot|\beta-\alpha|.
         \end{equation}
         Therefore, by Minkowsky inequality \cite{AdamsG,FollandRA,KolmogorovVol2} we have for $1< p <\infty$
         \begin{eqnarray*}
             \|A_n-\bar A\|_{L_p}&\leq & \|(A_n-\bar A)_1\|_{L_p}+\|(A_n-\bar A)_2\|_{L_p} \\
             &\leq & |\theta_n|\cdot |\beta-\alpha|^{\frac{1}{p}}\cdot|\beta_1-\alpha_1|^{\frac{1}{q}}  \left(1+
              \frac{|\delta\theta_{n}\sigma_2^2|}{|\sigma_1|}  \right)\to 0,
         \end{eqnarray*}
         when $n\to \infty$, since $\theta_n\to 0$. By triangle inequality we have for $p=\infty$
         \begin{align*}
            & \resizebox{1\hsize}{!}{$\displaystyle  \|A_n-\bar A\|_{L_\infty}\leq  \|(A_n-\bar A)_1\|_{L_\infty}+\|(A_n-\bar A)_2\|_{L_\infty} 
             \leq  |\theta_n| \cdot|\beta_1-\alpha_1|  \left(1+
              \frac{|\delta\theta_{n}\sigma_2^2|}{|\sigma_1|}  \right)\to 0, $}
         \end{align*}
         when $n\to \infty$ since $\theta_n\to 0$. Again, by triangle inequality we have for $p=1$
         \begin{align*}
            & \resizebox{1\hsize}{!}{$\displaystyle  \|A_n-\bar A\|_{L_1}\leq  \|(A_n-\bar A)_1\|_{L_1}+\|(A_n-\bar A)_2\|_{L_1} 
             \leq  |\theta_n| \cdot|\beta-\alpha|  \left(1+
              \frac{|\delta\theta_{n}\sigma_2^2|}{|\sigma_1|}  \right)\to 0, $}
         \end{align*}
         when $n\to \infty$ since $\theta_n\to 0$. Therefore, $A_n$ converges in norm to $\bar{A}$ in $L_p$, $1\le p\le \infty$.

         Now we prove the convergence for the sequence $\{A_nB-BA_n\}$. By applying H\"older inequality we have for all $x\in L_p(\mathbb{R},\mu)$, $1< p<\infty $ the following:
         \begin{align*}
            &\|(A_nB-BA_n)x\|^p_{L_p}= \left|\theta_{n}\sigma_2 \theta_{B,1}(\delta\theta_{n}\sigma_2-1)\right|^p\cdot
           \int\limits_{\mathbb{R}} \big|I_{[\alpha,\beta]}(t)\sin(\omega t)
          \\
          &
           \hspace{3cm} \cdot\int\limits_{\alpha_1}^{\beta_1} \cos (\omega s)x(s) d\mu_s\big|^p d\mu_t
           \\
           &
           \leq
           \left|\theta_{n}\sigma_2 \theta_{B,1}(\delta\theta_{n}\sigma_2-1)\right|^p\cdot
           |\beta-\alpha|
           \big|\int\limits_{\mathbb{R}} I_{[\alpha_1,\beta_1]}(s)\cos(\omega s)x(s)d\mu_s\big|^p
           \\
           & \le
           \left|\theta_{n}\sigma_2 \theta_{B,1}(\delta\theta_{n}\sigma_2-1)\right|^p |\beta-\alpha|  |\beta_1-\alpha_1|^{\frac{p}{q}}\cdot \|x\|^p.
         \end{align*}
         Therefore, for $1< p <\infty$ we have
         \begin{equation*}
         \|A_nB-BA_n\|_{L_p} \le  \left|\theta_{n}\sigma_2 \theta_{B,1}(\delta\theta_{n}\sigma_2-1)\right|\cdot
         |\beta-\alpha|^{\frac{1}{p}}\cdot |\beta_1-\alpha_1|^{\frac{1}{q}}\to 0,\, n\to\infty,
         \end{equation*}
         since $\theta_n\to 0$.
         By applying H\"older inequality, we have for  all $x\in L_\infty(\mathbb{R},\mu)$ the following:
            \begin{align*}
          & \|(A_nB-BA_n)x\|_{L_\infty}=\mathop{\esssup}_{t\in \mathbb{R}} \big|\theta_{n}\sigma_2 \theta_{B,1}(\delta\theta_{n}\sigma_2-1) I_{[\alpha,\beta]}(t)\sin(\omega t)
          \\
          & \hspace{3cm}\cdot\int\limits_{\alpha_1}^{\beta_1}\cos(\omega s)x(s)d\mu_s\big|
          \\
          &
          \le \left|\theta_{n}\sigma_2 \theta_{B,1}(\delta\theta_{n}\sigma_2-1)\right| \int\limits_{\mathbb{R}}| I_{[\alpha_1,\beta_1]}(s) \cos(\omega s)|d\mu_s \|x\|_{L_\infty}\\
          & \le \left|\theta_{n}\sigma_2 \theta_{B,1}(\delta\theta_{n}\sigma_2-1)\right|\cdot |\beta_1-\alpha_1|\|x\|_{L_\infty}
         \end{align*}
         Therefore, for $p=\infty$ we have
           \begin{equation*}
           \|A_nB-BA_n\|_{L_\infty}\le \left|\theta_{n}\sigma_2 \theta_{B,1}(\delta\theta_{n}\sigma_2-1)\right|\cdot |\beta_1-\alpha_1|\to 0,\, n\to\infty,
         \end{equation*}
         because $\theta_n\to 0$.
          For  all $x\in L_1(\mathbb{R},\mu)$,
            \begin{align*}
          & \|(A_nB-BA_n)x\|_{L_1}=\int\limits_{\mathbb{R}} \big|\theta_{n}\sigma_2 \theta_{B,1}(\delta\theta_{n}\sigma_2-1) I_{[\alpha,\beta]}(t)\sin(\omega t)  \\
          & \hspace{4cm}\cdot\int\limits_{\alpha_1}^{\beta_1}\cos(\omega s)x(s)d\mu_s\big|d\mu_t
          \\
          &
          \leq \left|\theta_{n}\sigma_2 \theta_{B,1}(\delta\theta_{n}\sigma_2-1)\right||\beta-\alpha| \int\limits_{\mathbb{R}}| I_{[\alpha_1,\beta_1]}(s) \cos(\omega s)x(s)|d\mu_s\\
          & \le \left|\theta_{n}\sigma_2 \theta_{B,1}(\delta\theta_{n}\sigma_2-1)\right|\cdot |\beta-\alpha|\|x\|_{L_1}.
         \end{align*}
         Therefore, for $p=1$ we have
           \begin{equation*}
           \|A_nB-BA_n\|_{L_1}\le \left|\theta_{n}\sigma_2 \theta_{B,1}(\delta\theta_{n}\sigma_2-1)\right|\cdot |\beta-\alpha|\to 0,\, n\to\infty,
         \end{equation*}
         because $\theta_n\to 0$.
          Since the sequence of operators  $\{A_n\}$  converges in norm to $\bar{A}$, then by Lemma \ref{LemmaOnseSequenceConvOptorsCommute}, $\bar{A}$ commutes with $B$.

         \noindent\ref{PropositionExampleTheoremIntOpRepGenKernLpConvSeqComOp:item3}. We have
         \begin{eqnarray*}
           (A_n-\tilde A)x(t)&=&\int\limits_{\alpha_1}^{\beta_1}I_{[\alpha_1,\beta_1]}(t) \left(\theta_n-\frac{1}{\delta \sigma_2}\right) \cos(\omega t)\cos(\omega s)x(s)d\mu_s\\
          & &+ \int\limits_{\alpha_1}^{\beta_1}I_{[\alpha_1,\beta_1]}(t)\left(\frac{\delta\theta^2_{n}\sigma_2^2}{\sigma_1}-\frac{1}{\delta\sigma_1}\right)\sin(\omega t)\sin(\omega s)x(s)d\mu_s\\
          &=&(A_n-\tilde A)_1x(t)+(A_n-\tilde A)_2x(t),
         \end{eqnarray*}
         respectively, for almost every $t$. By applying the same procedure of the item \ref{PropositionExampleTheoremIntOpRepGenKernLpConvSeqComOp:item2} we get for $1< p<\infty$
         \begin{eqnarray*}
             \|A_n-\tilde A\|_{L_p}&\leq & \|(A_n-\tilde A)_1\|_{L_p}+\|(A_n-\tilde A)_2\|_{L_p} \\
             &\leq &|\beta-\alpha|^{\frac{1}{p}}\cdot|\beta_1-\alpha_1|^{\frac{1}{q}}  \left(\left|\theta_n-\frac{1}{\delta \sigma_2}\right|+
              \frac{|\delta^2\theta^2_{n}\sigma_2^2-1|}{|\delta\sigma_1|}  \right)\to 0,
         \end{eqnarray*}
         when $n\to \infty$ since $\theta_n\to \frac{1}{\delta \sigma_2}$. For $p=\infty$ we get
         \begin{align*}
             \|A_n-\tilde A\|_{L_\infty}\leq & \|(A_n-\tilde A)_1\|_{L_\infty}+\|(A_n-\tilde A)_2\|_{L_\infty} \\
             \leq &|\beta_1-\alpha_1|  \left(\left|\theta_n-\frac{1}{\delta \sigma_2}\right|+
              \frac{|\delta^2\theta^2_{n}\sigma_2^2-1|}{|\delta\sigma_1|}  \right)\to 0,
         \end{align*}
         when $n\to \infty$ since $\theta_n\to \frac{1}{\delta \sigma_2}$. Moreover, for $p=1$ we have
          \begin{align*}
             \|A_n-\tilde A\|_{L_1}\leq & \|(A_n-\tilde A)_1\|_{L_1}+\|(A_n-\tilde A)_2\|_{L_1} \\
             \leq &|\beta-\alpha|  \left(\left|\theta_n-\frac{1}{\delta \sigma_2}\right|+
              \frac{|\delta^2\theta^2_{n}\sigma_2^2-1|}{|\delta\sigma_1|}  \right)\to 0,
         \end{align*}
         when $n\to \infty$ since $\theta_n\to \frac{1}{\delta \sigma_2}$.
         Therefore, $A_n$ converges in norm to $\tilde{A}$ in $L_p$, with $1\le p\le \infty$.
         By using applying results from item \ref{PropositionExampleTheoremIntOpRepGenKernLpConvSeqComOp:item2} we have for $1< p <\infty$
         \begin{equation*}
         \|A_nB-BA_n\|_{L_p} \le  \left|\theta_{n}\sigma_2 \theta_{B,1}(\delta\theta_{n}\sigma_2-1)\right|\cdot
         |\beta-\alpha|^{\frac{1}{p}}\cdot |\beta_1-\alpha_1|^{\frac{1}{q}}\to 0,\, n\to\infty
         \end{equation*}
         since $\theta_n\to \frac{1}{\delta \sigma_2}$.  For $p=\infty$ we have
           \begin{equation*}
           \|A_nB-BA_n\|_{L_\infty}\le \left|\theta_{n}\sigma_2 \theta_{B,1}(\delta\theta_{n}\sigma_2-1)\right|\cdot |\beta_1-\alpha_1|\to 0,\, n\to\infty,
         \end{equation*}
         because $\theta_n\to \frac{1}{\delta\sigma_2}$.
         And for $p=1$ we have
           \begin{equation*}
           \|A_nB-BA_n\|_{L_1}\le \left|\theta_{n}\sigma_2 \theta_{B,1}(\delta\theta_{n}\sigma_2-1)\right|\cdot |\beta-\alpha|\to 0,\, n\to\infty,
         \end{equation*}
         because $\theta_n\to \frac{1}{\delta\sigma_2}$.
         Since the sequence of operators  $\{A_n\}$  converges in norm to $\tilde{A}$, then, by Lemma \ref{LemmaOnseSequenceConvOptorsCommute}, $\tilde{A}$ commutes with $B$.

         \noindent\ref{PropositionExampleTheoremIntOpRepGenKernLpConvSeqComOp:item4}. By applying the same procedure of the item \ref{PropositionExampleTheoremIntOpRepGenKernLpConvSeqComOp:item2} we get
         we get for $1< p<\infty$
         \begin{eqnarray*}
             \|B_n\|_{L_p}  &\leq &|\beta-\alpha|^{\frac{1}{p}}\cdot|\beta_1-\alpha_1|^{\frac{1}{q}}|\varsigma_n|  \to 0,
         \end{eqnarray*}
         when $n\to \infty$ since $\varsigma_n\to 0$. For $p=\infty$ we get
         \begin{eqnarray*}
             \|B_n\|_{L_\infty}
             &\leq &|\beta_1-\alpha_1|  |\varsigma_n|\to 0,
         \end{eqnarray*}
         when $n\to \infty$ since $\varsigma_n\to 0$.
          And for $p=1$ we get
         \begin{eqnarray*}
             \|B_n\|_{L_1}
             &\leq &|\beta-\alpha|  |\varsigma_n|\to 0,
         \end{eqnarray*}
         when $n\to \infty$ since $\varsigma_n\to 0$.
          Therefore, $B_n$ converges in norm to $0$ in $L_p$, $1\le p\le \infty$.

         In order to prove that $\{AB_n-B_nA\}$ converges in norm to 0. We apply the same procedure of the item \ref{PropositionExampleTheoremIntOpRepGenKernLpConvSeqComOp:item2} for $1< p<\infty $, so we get for $1< p<\infty$
         \begin{equation*}
           \|AB_n-B_nA\|_{L_p}\le \left|\theta_{A,2}\sigma_2 \varsigma_n(\delta\theta_{A,2}\sigma_2-1)\right|\cdot |\beta-\alpha|^{\frac{1}{p}}|\beta_1-\alpha_1|^{\frac{1}{q}}\to 0,\, n\to\infty,
         \end{equation*}
         since $\varsigma_n\to 0$. For $p=\infty$ we get
           \begin{equation*}
           \|AB_n-B_nA\|_{L_\infty}\le \left|\theta_{A,2}\sigma_2 \varsigma_n(\delta\theta_{A,2}\sigma_2-1)\right|\cdot |\beta_1-\alpha_1|\to 0,\, n\to\infty,
         \end{equation*}
         because $\varsigma_n\to 0$. And for $p=1$ we get
           \begin{equation*}
           \|AB_n-B_nA\|_{L_1}\le \left|\theta_{A,2}\sigma_2 \varsigma_n(\delta\theta_{A,2}\sigma_2-1)\right|\cdot |\beta-\alpha|\to 0,\, n\to\infty,
         \end{equation*}
         because $\varsigma_n\to 0$
           Since the sequence of operators  $\{B_n\}$ converges in norm, then by Lemma \ref{LemmaOnseSequenceConvOptorsCommute}, it converges to an operator which commutes with $A$.

         \noindent\ref{PropositionExampleTheoremIntOpRepGenKernLpConvSeqComOp:item5}. The conclusion follows from
         items  \ref{PropositionExampleTheoremIntOpRepGenKernLpConvSeqComOp:item2}, \ref{PropositionExampleTheoremIntOpRepGenKernLpConvSeqComOp:item3}, \ref{PropositionExampleTheoremIntOpRepGenKernLpConvSeqComOp:item4}, Lemma \ref{LemmaOnseSequenceConvOptorsCommute} and Lemma \ref{LemmaTwoOpSequencesConvOptorsCommute}.

         \noindent\ref{PropositionExampleTheoremIntOpRepGenKernLpConvSeqComOp:item6}. By following the same procedure of the item \ref{PropositionExampleTheoremIntOpRepGenKernLpConvSeqComOp:item2}, the operator $AB-BA$ has the following estimation. For all $x\in L_p(\mathbb{R},\mu)$ and for every
         $$ (\theta_{A,1},\theta_{A,2},\theta_{B,1}, \alpha_1,\beta_1,\alpha,\beta,\sigma_1,\sigma_2,\delta,\omega)\in \Lambda
         $$
         we have
         \begin{equation*}
           \|(AB-BA)x\|_{L_p}\leq |\theta_{A,2}\sigma_2 \theta_{B,1}(\delta\theta_{A,2}\sigma_2-1)|\cdot \tilde{\lambda} \cdot \|x\|_{L_p}\to 0,
         \end{equation*}
           when   $\theta_{A,2}\sigma_2 \theta_{B,1}(\delta\theta_{A,2}\sigma_2-1)\tilde{\lambda}\to 0$, where $\tilde{\lambda}=|\beta-\alpha|^{\frac{1}{p}}\cdot|\beta_1-\alpha_1|^{\frac{1}{q}}$, if $1< p <\infty$,  $\tilde{\lambda}=|\beta_1-\alpha_1|$, if $p=\infty$ and $\tilde{\lambda}=|\beta-\alpha|$, if $p=1$. 
    \QEDB
    \end{proof}

    \begin{theorem}\label{Proposition6Case2BSequencesNoncomutOpConvComutOp}
       Let $A:L_p(\mathbb{R},\mu)\to L_p(\mathbb{R},\mu)$, $B:L_p(\mathbb{R},\mu)\to L_p(\mathbb{R},\mu)$, $1\le p\le \infty$ be operators defined as
       \begin{eqnarray*}
           (Ax)(t) &=&  \int\limits_{\alpha_1}^{\beta_1}  I_{[\alpha,\beta]}(t)\big(\frac{1}{\delta\sigma_2}\cos(\omega t)\cos(\omega s)+\frac{1}{\delta\sigma_1}\sin(\omega t)\sin(\omega s) \\
           & &  +\theta_{A,4}\cos(\omega t)\sin(\omega s)\big)x(s)d\mu_s, \\
           (Bx)(t) &=&  \int\limits_{\alpha_1}^{\beta_1}  I_{[\alpha,\beta]}(t)\big(\theta_{B,2}\cos(\omega t)\cos(\omega s) +\frac{2\sigma_2\theta_{B,2}}{\sigma_1}\sin(\omega t)\sin(\omega s)\big)x(s)d\mu_s,
           \end{eqnarray*}
           for almost every $t$, where $\theta_{A,4}, \theta_{B,2},\omega\in\mathbb{R}$,  $\delta \in\mathbb{R}\setminus\{0\}$, $\alpha,\beta,\alpha_1,\beta_1 \in\mathbb{R}$, $\alpha_1<\beta_1$, $\alpha\leq \alpha_1$, $\beta\geq \beta_1$ and, either the number $\frac{\omega }{\pi}(\beta_1-\alpha_1)\in \mathbb{Z}$ or $\frac{\omega }{\pi}(\beta_1+\alpha_1)\in \mathbb{Z}$,
           $\sigma_1,\sigma_2\in\mathbb{R}\setminus\{0\}$ are defined in \eqref{ConstantSigma1Case2Omega} and \eqref{ConstantSigma2Case2Omega}, respectively.
          Let $\{A_n\}: L_p(\mathbb{R},\mu)\to L_p(\mathbb{R},\mu)$, $\{B_n\}: L_p(\mathbb{R},\mu)\to L_p(\mathbb{R},\mu)$ $1\le p \le \infty$ be sequences of operators
        \begin{eqnarray*}
           (A_n x)(t) &=&  \int\limits_{\alpha_1}^{\beta_1}  I_{[\alpha,\beta]}(t)\big(\frac{1}{\delta\sigma_2}\cos(\omega t)\cos(\omega s)+\frac{1}{\delta\sigma_1}\sin(\omega t)\sin(\omega s) \\
           & &  +\theta_{n}\cos(\omega t)\sin(\omega s)\big)x(s)d\mu_s, \\
           (B_n x)(t) &=&  \int\limits_{\alpha_1}^{\beta_1}  I_{[\alpha,\beta]}(t)\left(\varsigma_{n}\cos(\omega t)\cos(\omega s) +\frac{2\sigma_2\varsigma_{n}}{\sigma_1}\sin(\omega t)\sin(\omega s)\right)x(s)d\mu_s,
           \end{eqnarray*}
           for almost every $t$, $\{\theta_n\}$ and $\{\varsigma_n\}$ are number sequences.
           Let
         \begin{align*}
          & \Lambda=\big\{(\theta_{A,4},\theta_{B,2}, \alpha_1,\beta_1,\alpha,\beta,\sigma_1,\sigma_2,\delta,\omega)\in\mathbb{R}^{10}: \delta\not=0, \alpha \leq \alpha_1, \beta\geq \beta_1,\\
         &  \ \int\limits_{\alpha_1}^{\beta_1}\sin(\omega s)\cos(\omega s)d\mu_s=0,  \sigma_1=\int\limits_{\alpha_1}^{\beta_1}(\sin(\omega s))^2d\mu_s\not=0,  \sigma_2=\beta_1-\alpha_1-\sigma_1\not=0 \big\}.
         \end{align*}
           Then
         \begin{enumerate}[label=\textup{\arabic*.}, ref=\arabic*]
           \item\label{Proposition6ExampleTheoremIntOpRepGenKernLpConvSeqComOp:item1}  $AB=\delta BA^2$,  $A_nB=\delta BA_n^2$, $A_nB_n=\delta B_nA_n^2 $, $AB_n=\delta B_nA^2 $ for each positive integer $n$. Moreover, for all $x\in L_p(\mathbb{R},\mu)$, $1\le p\le \infty$ we have
       \begin{equation*}
         (AB-BA)x(t)=\theta_{A,4}\sigma_2 \theta_{B,2}\int\limits_{\alpha_1}^{\beta_1} I_{[\alpha,\beta]}(t)\cos(\omega t)\sin(\omega s)x(s)d\mu_s,
       \end{equation*}
       for almost every $t$.
           \item\label{Proposition6ExampleTheoremIntOpRepGenKernLpConvSeqComOp:item2} if $\theta_n\to 0$ when $n\to \infty$, then $A_n\to \tilde{A}$ (converge in norm)
               defined as
            \begin{equation*}
           (\tilde A x)(t) =  \int\limits_{\alpha_1}^{\beta_1}  I_{[\alpha,\beta]}(t)\big(\frac{1}{\delta\sigma_2}\cos(\omega t)\cos(\omega s)+\frac{1}{\delta\sigma_1}\sin(\omega t)\sin(\omega s)\big)x(s)d\mu_s,
           \end{equation*}
            for almost every $t$, so $A_nB-BA_n\to 0$ (converge in norm) and $\tilde{A}B=B\tilde{A}$;
          \item\label{Proposition6ExampleTheoremIntOpRepGenKernLpConvSeqComOp:item3} if $\varsigma_n\to 0$ when $n\to \infty$, then $B_n\to 0$ (converge in norm) and so $AB_n-B_nA\to 0$ (converge in norm);
           \item\label{Proposition6ExampleTheoremIntOpRepGenKernLpConvSeqComOp:item4} if $\theta_n\to 0$ and $\varsigma_n\to 0$ when $n\to \infty$, then $A_nB_n -B_nA_n \to 0$ (converge in norm).
           \item\label{Proposition6ExampleTheoremIntOpRepGenKernLpConvSeqComOp:item5} if $\theta_{A,4}\sigma_2 \theta_{B,2}\tilde{\lambda}\to 0$ in $\mathbb{R}$ when $(\theta_{A,4},\theta_{B,2}, \alpha_1,\beta_1,\alpha,\beta,\sigma_1,\sigma_2,\delta,\omega)\to \lambda_0\in cl(\Lambda)$  (the closure of $\Lambda$), where
               $(\theta_{A,4},\theta_{B,2}, \alpha_1,\beta_1,\alpha,\beta,\sigma_1,\sigma_2,\delta,\omega)\in\Lambda$, and  $\tilde{\lambda}=|\beta-\alpha|^{\frac{1}{p}}|\beta_1-\alpha_1|^{\frac{1}{q}}$ for $1< p < \infty$ with $\frac{1}{p}+\frac{1}{q}=1$,  $\tilde{\lambda}=|\beta_1-\alpha_1|$ for $p=\infty$ and $\tilde{\lambda}=|\beta-\alpha|$ for $p=1$, then for all $x\in L_p(\mathbb{R},\mu)$, $1\leq p\leq \infty$, it holds that $\|(AB-BA)x\|_{L_p}\to 0$. 
         \end{enumerate}
    \end{theorem}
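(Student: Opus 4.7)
The plan is to follow closely the structure used in the previous theorems of this section, since the kernels here have the same bilinear form in $\{\sin(\omega\cdot),\cos(\omega\cdot)\}$ and the arguments are essentially modular.

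For item \ref{Proposition6ExampleTheoremIntOpRepGenKernLpConvSeqComOp:item1}, I would verify the commutation relations $AB=\delta BA^2$, $A_nB=\delta BA_n^2$, $A_nB_n=\delta B_nA_n^2$, $AB_n=\delta B_nA^2$ either by direct computation using the composition formula \eqref{AonCompositionIntegralOpGenSeparetedKernels} for powers of integral operators with separable kernels, or more economically by checking the kernel conditions of Corollary \ref{corDiedroRelBothIntOPGenSeptedKernels} with $G_A=G_B=[\alpha_1,\beta_1]$ and $F(z)=\delta z^2$. The orthogonality relation $\int_{\alpha_1}^{\beta_1}\sin(\omega s)\cos(\omega s)\,d\mu_s=0$, together with the definitions of $\sigma_1,\sigma_2$, gives the needed pairings $Q_{[\alpha_1,\beta_1]}(\cdot,\cdot)$. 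The explicit commutator formula for $(AB-BA)x(t)$ then follows from Proposition \ref{PropSimilarthmBothIntOpGensepKernellsCommutativityCommutator}: most cross-terms cancel owing to the same orthogonality, leaving exactly the $\cos(\omega t)\sin(\omega s)$ component with coefficient $\theta_{A,4}\sigma_2\theta_{B,2}$.

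For item \ref{Proposition6ExampleTheoremIntOpRepGenKernLpConvSeqComOp:item2}, the difference $(A_n-\tilde A)x(t)$ reduces to the single term $\theta_n\cos(\omega t)\sin(\omega s)$, so I would estimate $\|A_n-\tilde A\|_{L_p}$ separately for $1<p<\infty$, $p=\infty$, $p=1$ by H\"older's inequality exactly as done in the proofs of the previous theorems, obtaining
\begin{equation*}
\|A_n-\tilde A\|_{L_p}\leq |\theta_n|\cdot\tilde\lambda \to 0.
\end{equation*}
For convergence of $\{A_nB-BA_n\}$ to $0$ I would either estimate the explicit commutator kernel (which is $\theta_n\sigma_2\theta_{B,2}\cos(\omega t)\sin(\omega s)$ up to a constant) the same way, or, more cleanly, apply Lemma \ref{LemmaOnseSequenceConvOptorsCommute} with $C_m=A_n$, $C=\tilde A$, $D=B$, once I verify $\tilde AB=B\tilde A$ by a direct check on the remaining orthogonal kernel terms.

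For item \ref{Proposition6ExampleTheoremIntOpRepGenKernLpConvSeqComOp:item3}, I would split $B_n$ into the two summands (the $\cos\cos$-term and the $\sin\sin$-term), estimate each by H\"older as above, and combine them via Minkowski's inequality in $L_p$ (triangle inequality for $p\in\{1,\infty\}$); since the overall scalar is a linear combination of $\varsigma_n$, norm convergence of $B_n$ to the zero operator follows. Convergence $AB_n-B_nA\to 0$ then follows from $\|AB_n-B_nA\|_{L_p}\leq 2\|A\|_{L_p}\|B_n\|_{L_p}$ (boundedness of $A$ is clear by Remark \ref{RemarkConstantsEstimationOpFourierSeq}). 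Item \ref{Proposition6ExampleTheoremIntOpRepGenKernLpConvSeqComOp:item4} is then immediate from items \ref{Proposition6ExampleTheoremIntOpRepGenKernLpConvSeqComOp:item2}--\ref{Proposition6ExampleTheoremIntOpRepGenKernLpConvSeqComOp:item3} by invoking Lemma \ref{LemmaTwoOpSequencesConvOptorsCommute}. For item \ref{Proposition6ExampleTheoremIntOpRepGenKernLpConvSeqComOp:item5}, the explicit formula of item \ref{Proposition6ExampleTheoremIntOpRepGenKernLpConvSeqComOp:item1} gives, by a single H\"older estimate applied to the single-term kernel, $\|(AB-BA)x\|_{L_p}\leq |\theta_{A,4}\sigma_2\theta_{B,2}|\cdot\tilde\lambda\cdot\|x\|_{L_p}$, and the hypothesis $\theta_{A,4}\sigma_2\theta_{B,2}\tilde\lambda\to 0$ along $\Lambda$ gives the conclusion.

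The only mildly delicate point is in item \ref{Proposition6ExampleTheoremIntOpRepGenKernLpConvSeqComOp:item2}: one must either verify $\tilde AB=B\tilde A$ directly, or show that $A_nB-BA_n\to 0$ in norm by estimating the explicit commutator kernel before invoking Lemma \ref{LemmaOnseSequenceConvOptorsCommute}. I would use the latter route, computing the commutator $A_nB-BA_n$ via the commutator formula as in item \ref{Proposition6ExampleTheoremIntOpRepGenKernLpConvSeqComOp:item1} (the coefficient will be $\theta_n\sigma_2\theta_{B,2}$ up to sign), then bounding it by H\"older as in item \ref{Proposition6ExampleTheoremIntOpRepGenKernLpConvSeqComOp:item2} for each range of $p$. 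Once this is in place, Lemma \ref{LemmaOnseSequenceConvOptorsCommute} yields $\tilde AB=B\tilde A$ as a byproduct, and the rest of the calculation is routine.
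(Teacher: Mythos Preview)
Your proposal is correct and follows essentially the same route as the paper. The paper also proves item \ref{Proposition6ExampleTheoremIntOpRepGenKernLpConvSeqComOp:item1} by direct computation, handles item \ref{Proposition6ExampleTheoremIntOpRepGenKernLpConvSeqComOp:item2} by estimating the single-term difference $(A_n-\tilde A)$ via H\"older in the three ranges of $p$, then computes the commutator kernel $A_nB-BA_n$ explicitly (coefficient $\theta_n\sigma_2\theta_{B,2}$) and estimates it the same way before invoking Lemma \ref{LemmaOnseSequenceConvOptorsCommute}; item \ref{Proposition6ExampleTheoremIntOpRepGenKernLpConvSeqComOp:item5} is handled exactly as you describe. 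The only minor tactical differences are in items \ref{Proposition6ExampleTheoremIntOpRepGenKernLpConvSeqComOp:item3}--\ref{Proposition6ExampleTheoremIntOpRepGenKernLpConvSeqComOp:item4}: for $AB_n-B_nA\to 0$ the paper re-estimates the explicit commutator kernel $\theta_{A,4}\sigma_2\varsigma_n\cos(\omega t)\sin(\omega s)$ rather than using your submultiplicativity bound $2\|A\|\,\|B_n\|$, and for item \ref{Proposition6ExampleTheoremIntOpRepGenKernLpConvSeqComOp:item4} it uses the direct bound $\|A_nB_n-B_nA_n\|\le 2\|A_n\|\,\|B_n\|$ rather than Lemma \ref{LemmaTwoOpSequencesConvOptorsCommute}; both variants are equally valid and immediate.
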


    \begin{proof}
      \noindent\ref{Proposition6ExampleTheoremIntOpRepGenKernLpConvSeqComOp:item1}. It follows from direct computation.

         \noindent\ref{Proposition6ExampleTheoremIntOpRepGenKernLpConvSeqComOp:item2}.  We have
         \begin{eqnarray*}
           (A_n-\tilde A)x(t)&=&\int\limits_{\alpha_1}^{\beta_1}I_{[\alpha_1,\beta_1]}(t)\theta_n \cos(\omega t)\sin(\omega s)x(s)d\mu_s,
         \end{eqnarray*}
        for almost every $t$. Moreover, by applying H\"older inequality we  have for every $x\in L_p(\mathbb{R},\mu)$, $1< p<\infty $ the following estimations:
         \begin{eqnarray*}
         && \|(A_n-\tilde A)x\|^p_{L_p}=  \int\limits_{\mathbb{R}} \big| \int\limits_{\alpha_1}^{\beta_1}I_{[\alpha,\beta]}(t)\theta_n \cos(\omega t)\sin(\omega s)x(s)d\mu_s\big|^pd\mu_t
         \\&&\leq
         |\theta_n|^p |\beta-\alpha| \big|\int\limits_{\mathbb{R}} I_{[\alpha,\beta]}(s) \sin(\omega s)x(s)d\mu_s\big|^p \le |\theta_n|^p |\beta-\alpha||\beta_1-\alpha_1|^{\frac{p}{q}}\cdot\|x\|^p_{L_p}.
         \end{eqnarray*}
         Therefore, for $1< p<\infty $ we have the following:
         \begin{equation*}
          \|(A_n-\tilde A)\|_{L_p}\le  |\theta_n|\cdot|\beta-\alpha|^{\frac{1}{p}}\cdot|\beta_1-\alpha_1|^{\frac{1}{q}}\to 0,
         \end{equation*}
         when $n\to \infty$, since $\theta_n\to 0$. By applying H\"older inequality we have for all $x\in L_\infty(\mathbb{R},\mu)$ the following:
         \begin{eqnarray*}
         && \|(A_n-\tilde A) x\|_{L_\infty}= \mathop{\esssup}_{t\in \mathbb{R}} \big| \int\limits_{\alpha_1}^{\beta_1}I_{[\alpha,\beta]}(t)\theta_n \cos(\omega t)\sin(\omega s)x(s)d\mu_s\big|\\
         &&\leq
         |\theta_n|\cdot \int\limits_{\mathbb{R}} \left|I_{[\alpha_1,\beta_1]}(s) \sin(\omega s)\right|d\mu_s\cdot \|x\|_{L_\infty} \le |\theta_n|\cdot|\beta_1-\alpha_1|\cdot\|x\|_{L_\infty}.
         \end{eqnarray*}
          Therefore, for $ p=\infty $ we have the following:
         \begin{equation*}
          \|A_n-\tilde A\|_{L_\infty}\le  |\theta_n|\cdot|\beta_1-\alpha_1|\to 0,
         \end{equation*}
         when $n\to\infty$ since $\theta_n\to 0$.
         For all $x\in L_1(\mathbb{R},\mu)$,
         \begin{multline*}
          \|(A_n-\tilde A) x\|_{L_1}= \int\limits_{\mathbb{R}} \big| \int\limits_{\alpha_1}^{\beta_1}I_{[\alpha,\beta]}(t)\theta_n \cos(\omega t)\sin(\omega s)x(s)d\mu_s\big|d\mu_t\\
         \leq
         |\theta_n|\cdot|\beta-\alpha| \int\limits_{\mathbb{R}} |I_{[\alpha_1,\beta_1]}(s) \sin(\omega s)x(s)|d\mu_s \leq |\theta_n|\cdot|\beta-\alpha|\cdot\|x\|_{L_1}.
         \end{multline*}
          Therefore, for $ p=1 $ we have the following:
         \begin{equation*}
          \|A_n-\tilde A\|_{L_1}\le  |\theta_n|\cdot|\beta-\alpha|\to 0,
         \end{equation*}
         when $n\to\infty$ since $\theta_n\to 0$.
         Therefore, $A_n\to\tilde A$ converge in norm in $L_p$, $1\le p\le \infty$.

         Now we prove the convergence for the sequence $\{A_nB-BA_n\}$. By applying H\"older inequality we have for all $x\in L_p(\mathbb{R},\mu)$, $1< p<\infty $ the following:
         \begin{eqnarray*}
          &&\resizebox{0.85\hsize}{!}{$ \|(A_nB-BA_n)x\|^p_{L_p}= \big|\theta_{n}\sigma_2 \theta_{B,2}\int\limits_{\alpha_1}^{\beta_1} I_{[\alpha,\beta]}(t)\cos(\omega t)\sin(\omega s)x(s)d\mu_s\big|^p d\mu_t $}
            \\
          && \resizebox{1\hsize}{!}{$ \leq  |\theta_{n}\sigma_2 \theta_{B,2}|^p
           |\beta-\alpha|
           \big|\int\limits_{\mathbb{R}} I_{[\alpha_1,\beta_1]}(s)\sin(\omega s)x(s)d\mu_s\big|^p
            \leq
           |\theta_{n}\sigma_2 \theta_{B,2}|^p |\beta-\alpha|^{\frac{1}{p}}
            |\beta_1-\alpha_1|^{\frac{1}{q}}\cdot \|x\|^p.$}
         \end{eqnarray*}
         Therefore, for $1< p <\infty$ we have
         \begin{equation*}
         \|A_nB-BA_n\|_{L_p} \leq  \left|\theta_{n}\sigma_2 \theta_{B,2}\right|\cdot
         |\beta-\alpha|^{\frac{1}{p}}\cdot |\beta_1-\alpha_1|^{\frac{1}{q}}\to 0,\, n\to\infty,
         \end{equation*}
         since $\theta_n\to 0$.
         By applying H\"older inequality, we have for  all $x\in L_\infty(\mathbb{R},\mu)$ the following:
            \begin{eqnarray*}
          && \|(A_nB-BA_n)x\|_{L_\infty}=\mathop{\esssup}_{t\in \mathbb{R}} \big|\theta_{n}\sigma_2 \theta_{B,2} I_{[\alpha,\beta]}(t)\sin(\omega t)\cdot \int\limits_{\alpha_1}^{\beta_1}\cos(\omega s)x(s)d\mu_s\big|\\
          &&\leq  \left|\theta_{n}\sigma_2 \theta_{B,2}\right| \int\limits_{\mathbb{R}}| I_{[\alpha_1,\beta_1]}(s) \cos(\omega s)|d\mu_s \|x\|_{L_\infty}
           \leq  \left|\theta_{n}\sigma_2 \theta_{B,2}\right|\cdot |\beta_1-\alpha_1|\|x\|_{L_\infty}.
         \end{eqnarray*}
         Therefore, for $p=\infty$ we have
           \begin{equation*}
           \|A_nB-BA_n\|_{L_\infty}\le \left|\theta_{n}\sigma_2 \theta_{B,2}\right|\cdot |\beta_1-\alpha_1|\to 0,\, n\to\infty,
         \end{equation*}
         because $\theta_n\to 0$.  For  all $x\in L_1(\mathbb{R},\mu)$,
            \begin{eqnarray*}
          && \|(A_nB-BA_n)x\|_{L_1}=\int\limits_{ \mathbb{R}} |\theta_{n}\sigma_2 \theta_{B,2} I_{[\alpha,\beta]}(t)\sin(\omega t)\cdot \int\limits_{\alpha_1}^{\beta_1}\cos(\omega s)x(s)d\mu_s|d\mu_t\\
          &&\leq  |\theta_{n}\sigma_2 \theta_{B,2}||\beta-\alpha| \int\limits_{\mathbb{R}}| I_{[\alpha_1,\beta_1]}(s) \cos(\omega s)|d\mu_s \|x\|_{L_1}
           \leq  |\theta_{n}\sigma_2 \theta_{B,2}|\cdot |\beta-\alpha|\|x\|_{L_1}.
         \end{eqnarray*}
         Therefore, for $p=1$ we have
           \begin{equation*}
           \|A_nB-BA_n\|_{L_1}\le \left|\theta_{n}\sigma_2 \theta_{B,2}\right|\cdot |\beta-\alpha|\to 0,\, n\to\infty,
         \end{equation*}
         because $\theta_n\to 0$.
         Since the sequence of operators  $\{A_n\}$  converges in norm to $\tilde{A}$, then by Lemma \ref{LemmaOnseSequenceConvOptorsCommute}, $\tilde{A}$ commutes with $B$.

         \noindent\ref{Proposition6ExampleTheoremIntOpRepGenKernLpConvSeqComOp:item3}.
         We have
         \begin{eqnarray*}
           (B_n)x(t)&=& \int\limits_{\alpha_1}^{\beta_1}I_{[\alpha_1,\beta_1]}(t) \varsigma_n \cos(\omega t)\cos(\omega s)x(s)d\mu_s+\\
          & &+ \int\limits_{\alpha_1}^{\beta_1}I_{[\alpha_1,\beta_1]}(t)\varsigma_{n}\frac{2\sigma_2}{\sigma_1}\sin(\omega t)\sin(\omega s)x(s)d\mu_s\\
          &=&(B_{n,1})x(t)+(B_{n,2})x(t),
         \end{eqnarray*}
         respectively, for almost every $t$. Moreover, by applying H\"older inequality we  have for all $x\in L_p(\mathbb{R},\mu)$, $1< p<\infty $ the following estimations:
         \begin{eqnarray*}
          \|(B_{n,1})x\|^p_{L_p}&=&  \int\limits_{\mathbb{R}} \big| \int\limits_{\alpha_1}^{\beta_1}I_{[\alpha,\beta]}(t)\varsigma_n \cos(\omega t)\cos(\omega s)x(s)d\mu_s\big|^pd\mu_t\\
         &\leq& |\varsigma_n |^p |\beta-\alpha|\cdot \big|\ \int\limits_{\mathbb{R}} I_{[\alpha_1,\beta_1]}(s) \cos(\omega s)x(s)d\mu_s\big|^p \\ &\leq&|\varsigma_n |^p |\beta-\alpha|\cdot|\beta_1-\alpha_1|^{\frac{p}{q}}\cdot\|x\|^p_{L_p}.
         \end{eqnarray*}
         Therefore, for $1< p<\infty $ we have the following norm estimation:
         \begin{equation*}
          \|(B_{n,1})\|_{L_p}\le  |\varsigma_n |\cdot|\beta-\alpha|^{\frac{1}{p}}\cdot|\beta_1-\alpha_1|^{\frac{1}{q}}.
         \end{equation*}
         By applying H\"older inequality we for all $x\in L_\infty(\mathbb{R},\mu)$ the following:
         \begin{eqnarray*}
         && \hspace{-0.1cm} \|(B_{n,1})x\|_{L_\infty}=  \mathop{\esssup}_{t\in \mathbb{R}} \big| \int\limits_{\alpha_1}^{\beta_1}I_{[\alpha,\beta]}(t)\varsigma_n \cos(\omega t)\cos(\omega s)x(s)d\mu_s\big|\\
         && \hspace{-0.1cm} \leq |\theta_n |\cdot \int\limits_{\mathbb{R}} |I_{[\alpha_1,\beta_1]}(s) \cos(\omega s)|d\mu_s\cdot \|x\|_{L_\infty} \leq |\varsigma_n |\cdot|\beta_1-\alpha_1|\cdot\|x\|_{L_\infty}.
         \end{eqnarray*}
          Therefore, for $p=\infty $ we have the following norm estimation:
         \begin{equation*}
          \|B_{n,1}\|_{L_\infty}\le  \left|\varsigma_n \right|\cdot|\beta_1-\alpha_1|.
         \end{equation*}
         For all $x\in L_1(\mathbb{R},\mu)$,
         \begin{eqnarray*}
         && \hspace{-0.1cm} \|(B_{n,1})x\|_{L_1}=  \int\limits_{\mathbb{R}} \big| \int\limits_{\alpha_1}^{\beta_1}I_{[\alpha,\beta]}(t)\varsigma_n \cos(\omega t)\cos(\omega s)x(s)d\mu_s\big|d\mu_t\\
         && \hspace{-0.1cm} \leq |\theta_n |\cdot |\beta-\alpha|\int\limits_{\mathbb{R}} |I_{[\alpha_1,\beta_1]}(s) \cos(\omega s)x(s)|d\mu_s \leq |\varsigma_n |\cdot|\beta-\alpha|\cdot\|x\|_{L_1}.
         \end{eqnarray*}
          Therefore, for $p=1 $ we have the following norm estimation:
         \begin{equation*}
          \|B_{n,1}\|_{L_1}\le  \left|\varsigma_n \right|\cdot|\beta-\alpha|.
         \end{equation*}
         Analogously, we  have for  $ 1<p<\infty $ the following estimations:
         \begin{equation*}
          \|B_{n,2}\|_{L_p}\le  \left|\varsigma_{n}\frac{2\sigma_2}{\sigma_1}\right| \cdot|\beta-\alpha|^{\frac{1}{p}}\cdot|\beta_1-\alpha_1|^{\frac{1}{q}},
         \end{equation*}
          for $p=\infty$ we have
         \begin{equation*}
          \|B_{n,2}\|_{L_\infty}\le  \left|\varsigma_{n}\frac{2\sigma_2}{\sigma_1}\right| \cdot|\beta_1-\alpha_1|
         \end{equation*}
         and for $p=1$ we have
         \begin{equation*}
          \|B_{n,2}\|_{L_1}\le  \left|\varsigma_{n}\frac{2\sigma_2}{\sigma_1}\right| \cdot|\beta-\alpha|
         \end{equation*}
         Therefore, by Minkowski inequality \cite{AdamsG,FollandRA,KolmogorovVol2} we have for $1< p <\infty$
         \begin{eqnarray*}
             \|B_n\|_{L_p}&\leq & \|B_{n,1}\|_{L_p}+\|B_{n,2}\|_{L_p} \\
             &\leq & |\varsigma_n |\cdot |\beta-\alpha|^{\frac{1}{p}}\cdot|\beta_1-\alpha_1|^{\frac{1}{q}}  \left(\frac{2|\sigma_2|}{|\sigma_1|}+
              1 \right)\to 0,
         \end{eqnarray*}
         when $n\to \infty$, since $\varsigma_n\to 0$. On the other hand, by triangle inequality we have for $p=\infty$ and $p=1$,
         \begin{align*}
            & \|B_n\|_{L_\infty}\leq  \|B_{n,1}\|_{L_\infty}+\|B_{n,2}\|_{L_\infty}
             \leq  |\varsigma_n| \cdot|\beta_1-\alpha_1|  \left(\frac{2|\sigma_2|}{|\sigma_1|}+
              1  \right)\to 0,
           \\
           &\|B_n\|_{L_1}\leq  \|B_{n,1}\|_{L_1}+\|B_{n,2}\|_{L_1}
             \leq  |\varsigma_n| \cdot|\beta-\alpha|  \left(\frac{2|\sigma_2|}{|\sigma_1|}+
              1  \right)\to 0,
         \end{align*}
         when $n\to \infty$ since $\varsigma_n\to 0$. Therefore, $B_n$ converges in norm to $0$ in $L_p$, $1\le p\le \infty$.

         In order to prove that $\{AB_n-B_nA\}$ converges in norm to 0. We apply the same procedure of the item \ref{Proposition6ExampleTheoremIntOpRepGenKernLpConvSeqComOp:item2} for $1<p<\infty $, so we get for $1< p<\infty$
         \begin{equation*}
           \|AB_n-B_nA\|_{L_p}\le \left|\theta_{A,4}\sigma_2 \varsigma_n\right|\cdot |\beta-\alpha|^{\frac{1}{p}}|\beta_1-\alpha_1|^{\frac{1}{q}}\to 0,\, n\to\infty,
         \end{equation*}
         since $\varsigma_n\to 0$. For $p=\infty$ we get
           \begin{equation*}
           \|AB_n-B_nA\|_{L_\infty}\le \left|\theta_{A,4}\sigma_2 \varsigma_n\right|\cdot |\beta_1-\alpha_1|\to 0,\, n\to\infty,
         \end{equation*}
         because $\varsigma_n\to 0$. For $p=1$ we get
           \begin{equation*}
           \|AB_n-B_nA\|_{L_1}\le \left|\theta_{A,4}\sigma_2 \varsigma_n\right|\cdot |\beta-\alpha|\to 0,\, n\to\infty,
         \end{equation*}
         because $\varsigma_n\to 0$.

        \noindent\ref{Proposition6ExampleTheoremIntOpRepGenKernLpConvSeqComOp:item4}. By using results from items
         \ref{Proposition6ExampleTheoremIntOpRepGenKernLpConvSeqComOp:item2}, \ref{Proposition6ExampleTheoremIntOpRepGenKernLpConvSeqComOp:item3} and norms properties we have for all $1\le p\le \infty$ the following:
         \begin{eqnarray*}
           \|A_nB_n-B_nA_n\|_{L_p}&\leq&  \|A_nB_n\|_{L_p}+ \|B_nA_n\|_{L_p}\\
           &\leq &\|A_n\|_{L_p}\|B_n\|_{L_p}+\|B_n\|_{L_p}\|A_n\|_{L_p}= 2 \|A_n\|_{L_p}\|B_n\|_{L_p}\to 0,
         \end{eqnarray*}
         when $n\to\infty$, since $\|A_n\|_{L_p}$ is bounded (because $A_n$ converges in norm to $\tilde{A}$) and $\|B_n\|\to 0$.

         \noindent\ref{Proposition6ExampleTheoremIntOpRepGenKernLpConvSeqComOp:item5}. By following the same procedure of the item \ref{Proposition6ExampleTheoremIntOpRepGenKernLpConvSeqComOp:item2}, the operator $AB-BA$ has the following estimation. For all $x\in L_p(\mathbb{R},\mu)$ and all $\resizebox{0.45\hsize}{!}{$(\theta_{A,4},\theta_{B,2}, \alpha_1,\beta_1,\alpha,\beta,\sigma_1,\sigma_2,\delta,\omega)\in\Lambda $}$ we have
         \begin{equation*}
           \|(AB-BA)x\|_{L_p}\leq |\theta_{A,4}\sigma_2 \theta_{B,2}|\cdot \tilde{\lambda} \cdot \|x\|_{L_p}\to 0,
         \end{equation*}
           when   $\theta_{A,4}\sigma_2 \theta_{B,2}\tilde{\lambda}\to 0$, where $\tilde{\lambda}=|\beta-\alpha|^{\frac{1}{p}}\cdot|\beta_1-\alpha_1|^{\frac{1}{q}}$, if $1< p <\infty$, $\tilde{\lambda}=|\beta_1-\alpha_1|$, if $p=\infty$ and $\tilde{\lambda}=|\beta-\alpha|$, if $p=1$. 
            \QEDB
    \end{proof}

      \begin{theorem}\label{Proposition7ExampleTheoremIntOpRepGenKernLpConvSeqComOp}
       Let $A:L_p(\mathbb{R},\mu)\to L_p(\mathbb{R},\mu)$, $B:L_p(\mathbb{R},\mu)\to L_p(\mathbb{R},\mu)$, $1\le p\le \infty$ be operators defined as
       \begin{eqnarray*}
           (Ax)(t) &=&  \int\limits_{\alpha_1}^{\beta_1}  I_{[\alpha,\beta]}(t)\big(\frac{1}{\delta\sigma_2}\cos(\omega t)\cos(\omega s)-\frac{1}{\delta\sigma_1}\sin(\omega t)\sin(\omega s) \\
           & &  +\theta_{A,4}\cos(\omega t)\sin(\omega s)\big)x(s)d\mu_s, \\
           (Bx)(t) &=&  \int\limits_{\alpha_1}^{\beta_1}  I_{[\alpha,\beta]}(t)\theta_{B,2}\cos(\omega t)\cos(\omega s)x(s)d\mu_s,
           \end{eqnarray*}
           for almost every $t$, where $\theta_{A,4}, \theta_{B,2},\omega\in\mathbb{R}$,  $\delta \in\mathbb{R}\setminus\{0\}$, $\alpha,\beta,\alpha_1,\beta_1 \in\mathbb{R}$, $\alpha_1<\beta_1$, $\alpha_1\le \alpha$, $\beta_1\ge \beta$ and, either the number $\frac{\omega }{\pi}(\beta_1-\alpha_1)\in \mathbb{Z}$ or $\frac{\omega }{\pi}(\beta_1+\alpha_1)\in \mathbb{Z}$,
           $\sigma_1,\sigma_2\in\mathbb{R}\setminus\{0\}$ are defined in \eqref{ConstantSigma1Case2Omega} and \eqref{ConstantSigma2Case2Omega}, respectively.
          Let $\{A_n\}: L_p(\mathbb{R},\mu)\to L_p(\mathbb{R},\mu)$, $\{B_n\}: L_p(\mathbb{R},\mu)\to L_p(\mathbb{R},\mu)$ $1\le p \le \infty$ be sequences of operators
        \begin{eqnarray*}
           (A_n x)(t) &=&  \int\limits_{\alpha_1}^{\beta_1}  I_{[\alpha,\beta]}(t)\big(\frac{1}{\delta\sigma_2}\cos(\omega t)\cos(\omega s)+\frac{1}{\delta\sigma_1}\sin(\omega t)\sin(\omega s)
           \\
           & &  -\theta_{n}\cos(\omega t)\sin(\omega s)\big)x(s)d\mu_s,
           \\
           (B_n x)(t) &=&  \int\limits_{\alpha_1}^{\beta_1}  I_{[\alpha,\beta]}(t)\varsigma_{n}\cos(\omega t)\cos(\omega s)x(s)d\mu_s,
           \end{eqnarray*}
           for almost every $t$, $\{\theta_n\}$ and $\{\varsigma_n\}$ are number sequences.
           Let
         \begin{align*}
         & \Lambda=\big\{(\theta_{A,4},\theta_{B,2}, \alpha_1,\beta_1,\alpha,\beta,\sigma_1,\sigma_2,\delta,\omega)\in\mathbb{R}^{10}:\, \delta\not=0,\alpha \leq \alpha_1, \beta\geq \beta_1, \\
         & \int\limits_{\alpha_1}^{\beta_1}\sin(\omega s)\cos(\omega s)d\mu_s=0,
         \sigma_1=\int\limits_{\alpha_1}^{\beta_1}(\sin(\omega s))^2d\mu_s\not=0, \sigma_2=\beta_1-\alpha_1-\sigma_1\not=0
         \big\}.
         \end{align*}
           Then
         \begin{enumerate}[label=\textup{\arabic*.}, ref=\arabic*]
           \item\label{Proposition7ExampleTheoremIntOpRepGenKernLpConvSeqComOp:item1}  $AB=\delta BA^2$,  $A_nB=\delta BA_n^2$, $A_nB_n=\delta B_nA_n^2 $, $AB_n=\delta B_nA^2 $ for each positive integer $n$. Moreover, for all $x\in L_p(\mathbb{R},\mu)$, $1\le p\le \infty$ we have
       \begin{equation*}
         (AB-BA)x(t)=\theta_{A,4}\sigma_2 \theta_{B,2}\int\limits_{\alpha_1}^{\beta_1} I_{[\alpha,\beta]}(t)\cos(\omega t)\sin(\omega s)x(s)d\mu_s,
       \end{equation*}
        for almost every $t$.
        \item\label{Proposition7ExampleTheoremIntOpRepGenKernLpConvSeqComOp:item2} if $\theta_n\to 0$ when $n\to \infty$, then $A_n\to \tilde{A}$ (converge in norm)
               defined as
            \begin{equation*}
           (\tilde A x)(t) =  \int\limits_{\alpha_1}^{\beta_1}  I_{[\alpha,\beta]}(t)\big(\frac{1}{\delta\sigma_2}\cos(\omega t)\cos(\omega s)-\frac{1}{\delta\sigma_1}\sin(\omega t)\sin(\omega s)\big)x(s)d\mu_s,
           \end{equation*}
           for almost every $t$, so $A_nB-BA_n\to 0$ (converge in norm) and $\tilde{A}B=B\tilde{A}$;
          \item\label{Proposition7ExampleTheoremIntOpRepGenKernLpConvSeqComOp:item3} if $\varsigma_n\to 0$ when $n\to \infty$, then $B_n\to 0$ (converge in norm) and so $AB_n-B_nA\to 0$ (converge in norm);
          \item\label{Proposition7ExampleTheoremIntOpRepGenKernLpConvSeqComOp:item4} if $\theta_n\to 0$ and $\varsigma_n\to 0$ when $n\to \infty$, then $A_nB_n -B_nA_n \to 0$ (converge in norm).
             \item\label{Proposition7ExampleTheoremIntOpRepGenKernLpConvSeqComOp:item5} if $\theta_{A,4}\sigma_2 \theta_{B,2}\tilde{\lambda}\to 0$ in $\mathbb{R}$ when $(\theta_{A,4},\theta_{B,2}, \alpha_1,\beta_1,\alpha,\beta,\sigma_1,\sigma_2,\delta,\omega)\to \lambda_0\in cl(\Lambda)$,
                 (the closure of $\Lambda$), where
                   $(\theta_{A,4},\theta_{B,2}, \alpha_1,\beta_1,\alpha,\beta,\sigma_1,\sigma_2,\delta,\omega)\in\Lambda$
                    and $\tilde{\lambda}=|\beta-\alpha|^{\frac{1}{p}}|\beta_1-\alpha_1|^{\frac{1}{q}}$ for $1< p < \infty$ with $\frac{1}{p}+\frac{1}{q}=1$, $\tilde{\lambda}=|\beta_1-\alpha_1|$ for $p=\infty$ and $\tilde{\lambda}=|\beta-\alpha|$ for $p=1$, then for all $x\in L_p(\mathbb{R},\mu)$, $1\leq p\leq \infty$, it holds that $\|(AB-BA)x\|_{L_p}\to 0$.
     \end{enumerate}
    \end{theorem}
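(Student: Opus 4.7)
The plan is to follow exactly the template of the proof of Theorem \ref{Proposition6Case2BSequencesNoncomutOpConvComutOp}, since the kernels here are strictly simpler (the operator $B$ has only the single term $\theta_{B,2}\cos(\omega t)\cos(\omega s)$ rather than two terms). Item \ref{Proposition7ExampleTheoremIntOpRepGenKernLpConvSeqComOp:item1} will be obtained by direct computation: expanding $AB$ and $BA$ via Proposition \ref{PropSimilarthmBothIntOpGensepKernellsCommutativityCommutator}, using the orthogonality $\int_{\alpha_1}^{\beta_1}\sin(\omega s)\cos(\omega s)d\mu_s=0$ (which follows from the hypothesis that $\frac{\omega}{\pi}(\beta_1\pm\alpha_1)\in\mathbb{Z}$) together with the defining relations for $\sigma_1,\sigma_2$ in \eqref{ConstantSigma1Case2Omega}--\eqref{ConstantSigma2Case2Omega}. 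A large number of cross terms cancel, leaving precisely the single $\cos(\omega t)\sin(\omega s)$ contribution with coefficient $\theta_{A,4}\sigma_2\theta_{B,2}$. The relations $AB=\delta BA^2$, $A_nB=\delta BA_n^2$, etc., will be checked by verifying the conditions of Corollary \ref{corDiedroRelBothIntOPGenSeptedKernels} with $G_A=G_B=[\alpha_1,\beta_1]$ and $F(z)=\delta z^2$, so that only condition \ref{thmBothIntOPGenSeptedKernelsRelHABeqBFA:cond1} of Theorem \ref{thmBothIntOPGenSeptedKernels} is active (by Remark \ref{RemOpDefInSameIntervalGenSepKernels}).

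For items \ref{Proposition7ExampleTheoremIntOpRepGenKernLpConvSeqComOp:item2} and \ref{Proposition7ExampleTheoremIntOpRepGenKernLpConvSeqComOp:item3} the strategy is to write the difference operator explicitly and apply H\"older's inequality in the three separate regimes $1<p<\infty$, $p=\infty$, $p=1$. For item \ref{Proposition7ExampleTheoremIntOpRepGenKernLpConvSeqComOp:item2}, $(A_n-\tilde A)x(t)$ reduces to the single kernel term $-\theta_n\cos(\omega t)\sin(\omega s)$, so one obtains a bound of the form $\|A_n-\tilde A\|_{L_p}\le |\theta_n|\tilde\lambda \to 0$ as $\theta_n\to 0$, with the same $\tilde\lambda$ as in item \ref{Proposition7ExampleTheoremIntOpRepGenKernLpConvSeqComOp:item5}. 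The convergence $A_nB-BA_n\to 0$ then follows either from a direct H\"older estimate on the explicit commutator $A_nB-BA_n=\theta_n\sigma_2\theta_{B,2}\int I_{[\alpha,\beta]}(t)\cos(\omega t)\sin(\omega s)x(s)d\mu_s$, or by invoking Lemma \ref{LemmaOnseSequenceConvOptorsCommute}, which simultaneously yields $\tilde A B=B\tilde A$. For item \ref{Proposition7ExampleTheoremIntOpRepGenKernLpConvSeqComOp:item3}, $B_n$ itself has only one kernel term with scalar $\varsigma_n$, so an analogous H\"older bound gives $\|B_n\|_{L_p}\le|\varsigma_n|\tilde\lambda\to 0$, and then $\|AB_n-B_nA\|_{L_p}\le 2\|A\|_{L_p}\|B_n\|_{L_p}\to 0$.

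Item \ref{Proposition7ExampleTheoremIntOpRepGenKernLpConvSeqComOp:item4} is a direct consequence of items \ref{Proposition7ExampleTheoremIntOpRepGenKernLpConvSeqComOp:item2} and \ref{Proposition7ExampleTheoremIntOpRepGenKernLpConvSeqComOp:item3} together with Lemma \ref{LemmaTwoOpSequencesConvOptorsCommute}, since $A_n\to\tilde A$ and $B_n\to 0$ in norm and the limit operators $\tilde A$ and $0$ trivially commute. Item \ref{Proposition7ExampleTheoremIntOpRepGenKernLpConvSeqComOp:item5} is obtained by applying H\"older's inequality to the closed-form expression for $(AB-BA)x(t)$ from item \ref{Proposition7ExampleTheoremIntOpRepGenKernLpConvSeqComOp:item1}, splitting again into $1<p<\infty$, $p=\infty$, $p=1$ to read off the correct $\tilde\lambda$; this yields $\|(AB-BA)x\|_{L_p}\le|\theta_{A,4}\sigma_2\theta_{B,2}|\cdot\tilde\lambda\cdot\|x\|_{L_p}$, which tends to $0$ as the parameters move through $\Lambda$ toward any boundary point $\lambda_0\in cl(\Lambda)$ along which $\theta_{A,4}\sigma_2\theta_{B,2}\tilde\lambda\to 0$.

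The only real obstacle is the routine but lengthy case analysis over $p\in\{1\}\cup(1,\infty)\cup\{\infty\}$, and making sure the indicator functions $I_{[\alpha,\beta]}$ versus $I_{[\alpha_1,\beta_1]}$ are integrated against the correct variable so that the factors $|\beta-\alpha|$ and $|\beta_1-\alpha_1|$ appear in the right places. Since all structural tools (Proposition \ref{PropSimilarthmBothIntOpGensepKernellsCommutativityCommutator}, Lemmas \ref{LemmaOnseSequenceConvOptorsCommute} and \ref{LemmaTwoOpSequencesConvOptorsCommute}, and the pattern of the preceding theorems) are already in place, no new idea is required.
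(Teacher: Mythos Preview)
Your proposal is correct and follows essentially the same approach as the paper's own proof: direct computation for item \ref{Proposition7ExampleTheoremIntOpRepGenKernLpConvSeqComOp:item1}, explicit H\"older estimates in the three regimes $1<p<\infty$, $p=\infty$, $p=1$ for items \ref{Proposition7ExampleTheoremIntOpRepGenKernLpConvSeqComOp:item2}, \ref{Proposition7ExampleTheoremIntOpRepGenKernLpConvSeqComOp:item3}, \ref{Proposition7ExampleTheoremIntOpRepGenKernLpConvSeqComOp:item5}, and the commutator convergence in item \ref{Proposition7ExampleTheoremIntOpRepGenKernLpConvSeqComOp:item4} from the limits of $A_n$ and $B_n$. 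The only cosmetic differences are that the paper bounds $\|AB_n-B_nA\|_{L_p}$ via the explicit commutator formula rather than the cruder $2\|A\|\,\|B_n\|$ bound, and handles item \ref{Proposition7ExampleTheoremIntOpRepGenKernLpConvSeqComOp:item4} by the direct estimate $\|A_nB_n-B_nA_n\|_{L_p}\le 2\|A_n\|_{L_p}\|B_n\|_{L_p}$ rather than invoking Lemma \ref{LemmaTwoOpSequencesConvOptorsCommute}; both routes are equivalent here.
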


    \begin{proof}
   \noindent\ref{Proposition7ExampleTheoremIntOpRepGenKernLpConvSeqComOp:item1}. It follows from direct computation.
         \noindent\ref{Proposition7ExampleTheoremIntOpRepGenKernLpConvSeqComOp:item2}.
          We have
         \begin{eqnarray*}
           (A_n-\tilde A)x(t)&=&\int\limits_{\alpha_1}^{\beta_1}I_{[\alpha_1,\beta_1]}(t)\theta_n \cos(\omega t)\sin(\omega s)x(s)d\mu_s,
         \end{eqnarray*}
         for almost every $t$. Moreover, by applying H\"older inequality we  have for all $x\in L_p(\mathbb{R},\mu)$, $1\le p<\infty $ the following estimations:
         \begin{eqnarray*}
         && \|(A_n-\tilde A)x\|^p_{L_p}=  \int\limits_{\mathbb{R}} \big| \int\limits_{\alpha_1}^{\beta_1}I_{[\alpha,\beta]}(t)\theta_n \cos(\omega t)\sin(\omega s)x(s)d\mu_s\big|^pd\mu_t\\
         && \leq
         |\theta_n|^p |\beta-\alpha| \big|\int\limits_{\mathbb{R}} I_{[\alpha,\beta]}(s) \sin(\omega s)x(s)d\mu_s\big|^p \le |\theta_n|^p |\beta-\alpha||\beta_1-\alpha_1|^{\frac{p}{q}}\cdot\|x\|^p_{L_p}.
         \end{eqnarray*}
         Therefore, for $1< p<\infty $ we have the following:
         \begin{equation*}
          \|(A_n-\tilde A)\|_{L_p}\leq  |\theta_n|\cdot|\beta-\alpha|^{\frac{1}{p}}\cdot|\beta_1-\alpha_1|^{\frac{1}{q}}\to 0,
         \end{equation*}
         when $n\to \infty$, since $\theta_n\to 0$. By applying H\"older inequality we have for all $x\in L_\infty(\mathbb{R},\mu)$ the following:
         \begin{eqnarray*}
         && \|(A_n-\tilde A) x\|_{L_\infty}=  \mathop{\esssup}_{t\in \mathbb{R}} \big| \ \int\limits_{\alpha_1}^{\beta_1}I_{[\alpha,\beta]}(t)\theta_n \cos(\omega t)\sin(\omega s)x(s)d\mu_s\big|\\
         &&\leq
         |\theta_n|\cdot \int\limits_{\mathbb{R}} \left|I_{[\alpha_1,\beta_1]}(s) \sin(\omega s)\right|d\mu_s\cdot \|x\|_{L_\infty} \le |\theta_n|\cdot|\beta_1-\alpha_1|\cdot\|x\|_{L_\infty}.
         \end{eqnarray*}
          Therefore, for $p=\infty $ we have the following:
         \begin{equation*}
          \|A_n-\tilde A\|_{L_\infty}\le  |\theta_n|\cdot|\beta_1-\alpha_1|\to 0,
         \end{equation*}
         when $n\to\infty$ since $\theta_n\to 0$.
        For all $x\in L_1(\mathbb{R},\mu)$,
         \begin{align*}
         & \|(A_n-\tilde A) x\|_{L_1}=  \int\limits_{\mathbb{R}} \big| \int\limits_{\alpha_1}^{\beta_1}I_{[\alpha,\beta]}(t)\theta_n \cos(\omega t)\sin(\omega s)x(s)d\mu_s\big|d\mu_t\\
         &
         \leq
         |\theta_n||\beta-\alpha|\cdot \int\limits_{\mathbb{R}} |I_{[\alpha_1,\beta_1]}(s) \sin(\omega s)x(s)|d\mu_s \leq |\theta_n|\cdot|\beta-\alpha|\cdot\|x\|_{L_1}.
         \end{align*}
          Therefore, for $ p=1 $ we have the following:
         \begin{equation*}
          \|A_n-\tilde A\|_{L_1}\le  |\theta_n|\cdot|\beta-\alpha|\to 0,
         \end{equation*}
         when $n\to\infty$ since $\theta_n\to 0$.
         Therefore, $A_n\to\tilde A$ converge in norm in $L_p$, $1\le p\le \infty$.

         Now we prove the convergence for the sequence $\{A_nB-BA_n\}$. By applying H\"older inequality we have for all $x\in L_p(\mathbb{R},\mu)$, $1< p<\infty $ the following:
         \begin{eqnarray*}
           \|(A_nB-BA_n)x\|^p_{L_p}&=& \big|\theta_{n}\sigma_2 \theta_{B,2}\int\limits_{\alpha_1}^{\beta_1} I_{[\alpha,\beta]}(t)\cos(\omega t)\sin(\omega s)x(s)d\mu_s\big|^p d\mu_t
            \\
          &  \leq & |\theta_{n}\sigma_2 \theta_{B,2}|^p\cdot
           |\beta-\alpha|\cdot
           \big|\int\limits_{\mathbb{R}} I_{[\alpha_1,\beta_1]}(s)\sin(\omega s)x(s)d\mu_s\big|^p \\
           &  \leq &
           |\theta_{n}\sigma_2 \theta_{B,2}|^p |\beta-\alpha| \cdot
            |\beta_1-\alpha_1|^{\frac{p}{q}}\cdot \|x\|^p.
         \end{eqnarray*}
         Therefore, for $1< p <\infty$ we have
         \begin{equation*}
         \|A_nB-BA_n\|_{L_p} \le  |\theta_{n}\sigma_2 \theta_{B,2}|\cdot
         |\beta-\alpha|^{\frac{1}{p}}\cdot |\beta_1-\alpha_1|^{\frac{1}{q}}\to 0,\, n\to\infty,
         \end{equation*}
         since $\theta_n\to 0$.
         By applying H\"older inequality, we have for  all $x\in L_\infty(\mathbb{R},\mu)$ the following:
            \begin{eqnarray*}
           &&\|(A_nB-BA_n)x\|_{L_\infty}=\mathop{\esssup}_{t\in \mathbb{R}} \big|\theta_{n}\sigma_2 \theta_{B,2} I_{[\alpha,\beta]}(t)\sin(\omega t)\cdot \int\limits_{\alpha_1}^{\beta_1}\cos(\omega s)x(s)d\mu_s\big|\\
          &&\ \leq  |\theta_{n}\sigma_2 \theta_{B,2}| \int\limits_{\mathbb{R}}| I_{[\alpha_1,\beta_1]}(s) \cos(\omega s)|d\mu_s \|x\|_{L_\infty}
           \leq  |\theta_{n}\sigma_2 \theta_{B,2}|\cdot |\beta_1-\alpha_1|\|x\|_{L_\infty}.
         \end{eqnarray*}
         Therefore, for $p=\infty$ we have
           \begin{equation*}
           \|A_nB-BA_n\|_{L_\infty}\le |\theta_{n}\sigma_2 \theta_{B,2}|\cdot |\beta_1-\alpha_1|\to 0,\, n\to\infty,
         \end{equation*}
         because $\theta_n\to 0$.
          For  all $x\in L_1(\mathbb{R},\mu)$,
            \begin{eqnarray*}
           &&\|(A_nB-BA_n)x\|_{L_1}=\int\limits_{\mathbb{R}} \big|\theta_{n}\sigma_2 \theta_{B,2} I_{[\alpha,\beta]}(t)\sin(\omega t)\cdot \int\limits_{\alpha_1}^{\beta_1}\cos(\omega s)x(s)d\mu_s\big|d\mu_t\\
          &&\ \leq  |\theta_{n}\sigma_2 \theta_{B,2}| |\beta-\alpha|\int\limits_{\mathbb{R}}| I_{[\alpha_1,\beta_1]}(s) \cos(\omega s)x(s)|d\mu_s
           \leq  |\theta_{n}\sigma_2 \theta_{B,2}|\cdot |\beta-\alpha|\|x\|_{L_1}.
         \end{eqnarray*}
         Therefore, for $p=1$ we have
           \begin{equation*}
           \|A_nB-BA_n\|_{L_1}\le |\theta_{n}\sigma_2 \theta_{B,2}|\cdot |\beta-\alpha|\to 0,\, n\to\infty,
         \end{equation*}
         because $\theta_n\to 0$.
          Since the sequence of operators  $\{A_n\}$  converges in norm to $\tilde{A}$, then by Lemma \ref{LemmaOnseSequenceConvOptorsCommute}, $\tilde{A}$ commutes with $B$.

         \noindent\ref{Proposition7ExampleTheoremIntOpRepGenKernLpConvSeqComOp:item3}.
         By applying the same procedure of the item \ref{Proposition7ExampleTheoremIntOpRepGenKernLpConvSeqComOp:item2}, we have, for $1< p<\infty $  the following:
         \begin{equation*}
          \|B_{n}\|_{L_p}\le  \left|\varsigma_n \right|\cdot|\beta-\alpha|^{\frac{1}{p}}\cdot|\beta_1-\alpha_1|^{\frac{1}{q}}\to 0,
         \end{equation*}
         when $n\to\infty$ since $\varsigma_n\to 0$. For $p=\infty$ we have
         \begin{equation*}
          \|B_{n}\|_{L_\infty}\le  \left|\varsigma_{n}\right| \cdot|\beta_1-\alpha_1|\to 0,
         \end{equation*}
         when $n\to\infty$ since $\varsigma_n\to 0$.
         And for $p=1$ we have
         \begin{equation*}
          \|B_{n}\|_{L_1}\le  \left|\varsigma_{n}\right| \cdot|\beta-\alpha|\to 0,
         \end{equation*}
         when $n\to\infty$ since $\varsigma_n\to 0$.
         Therefore, $B_n$ converges in norm to $0$ in $L_p$, $1\le p\le \infty$.

         In order to prove that $\{AB_n-B_nA\}$ converges in norm to 0. We apply the same procedure of the item \ref{Proposition7ExampleTheoremIntOpRepGenKernLpConvSeqComOp:item2} for $1< p<\infty $, so we get for $1< p<\infty$
         \begin{equation*}
           \|AB_n-B_nA\|_{L_p}\le \left|\theta_{A,4}\sigma_2 \varsigma_n\right|\cdot |\beta-\alpha|^{\frac{1}{p}}|\beta_1-\alpha_1|^{\frac{1}{q}}\to 0,\, n\to\infty,
         \end{equation*}
         since $\varsigma_n\to 0$. For $p=\infty$ we get
           \begin{equation*}
           \|AB_n-B_nA\|_{L_\infty}\le \left|\theta_{A,4}\sigma_2 \varsigma_n\right|\cdot |\beta_1-\alpha_1|\to 0,\, n\to\infty,
         \end{equation*}
         because $\varsigma_n\to 0$. And for $p=1$ we get
           \begin{equation*}
           \|AB_n-B_nA\|_{L_1}\le \left|\theta_{A,4}\sigma_2 \varsigma_n\right|\cdot |\beta-\alpha|\to 0,\, n\to\infty,
         \end{equation*}
         because $\varsigma_n\to 0$.

        \noindent\ref{Proposition7ExampleTheoremIntOpRepGenKernLpConvSeqComOp:item4}. By using results from items
         \ref{Proposition7ExampleTheoremIntOpRepGenKernLpConvSeqComOp:item2}, \ref{Proposition7ExampleTheoremIntOpRepGenKernLpConvSeqComOp:item3} and norms properties we have for all $1\le p\le \infty$ the following:
         \begin{eqnarray*}
           \|A_nB_n-B_nA_n\|_{L_p}&\leq&  \|A_nB_n\|_{L_p}+ \|B_nA_n\|_{L_p}\\
           &\leq &\|A_n\|_{L_p}\|B_n\|_{L_p}+\|B_n\|_{L_p}\|A_n\|_{L_p}= 2 \|A_n\|_{L_p}\|B_n\|_{L_p}\to 0,
         \end{eqnarray*}
         when $n\to\infty$, since $\|A_n\|_{L_p}$ is bounded (because $A_n$ converges in norm to $\tilde{A}$) and $\|B_n\|\to 0$.

         \noindent\ref{Proposition7ExampleTheoremIntOpRepGenKernLpConvSeqComOp:item5}. By following the same procedure of the item \ref{Proposition7ExampleTheoremIntOpRepGenKernLpConvSeqComOp:item2}, the operator $AB-BA$ has the following estimation. For all $x\in L_p(\mathbb{R},\mu)$ and for all
         $$
         (\theta_{A,4},\theta_{B,2}, \alpha_1,\beta_1,\alpha,\beta,\sigma_1,\sigma_2,\delta,\omega)\in \Lambda
         $$
         we have
         \begin{equation*}
           \|(AB-BA)x\|_{L_p}\leq |\theta_{A,4}\sigma_2 \theta_{B,2}|\cdot \tilde{\lambda} \cdot \|x\|_{L_p}\to 0,
         \end{equation*}
           when   $\theta_{A,4}\sigma_2 \theta_{B,2}\tilde{\lambda}\to 0$, where $\tilde{\lambda}=|\beta-\alpha|^{\frac{1}{p}}\cdot|\beta_1-\alpha_1|^{\frac{1}{q}}$, if $1< p <\infty$, $\tilde{\lambda}=|\beta_1-\alpha_1|$, if $p=\infty$ and and $\tilde{\lambda}=|\beta-\alpha|$, if $p=1$. 
            \QEDB
    \end{proof}

\begin{remark}
In connection to Propositions \ref{Proposition0ExampleTheoremIntOpRepGenKernLpConvSeqComOp}--\ref{Proposition7ExampleTheoremIntOpRepGenKernLpConvSeqComOp}
the estimation of operators in these propositions can be compared to the one obtained in Remark \ref{RemarkConstantsEstimationOpFourierSeq}. However this depends on the magnitude of $|\beta-\alpha|$ and $|\beta_1-\alpha_1|$.
\end{remark}

 \begin{theorem}\label{PropositionExampleLaurentPolyTheoremIntOpRepGenKernLpConvSeqComOp}
 Let $A:L_p(\mathbb{R},\mu)\to L_p(\mathbb{R},\mu)$, $B:L_p(\mathbb{R},\mu)\to L_p(\mathbb{R},\mu)$, $1< p\leq \infty$ be operators defined as
  \begin{equation*}
  \resizebox{1\hsize}{!}{$ (Ax)(t)=\int\limits_{1}^{2} I_{[\alpha,\infty[}(t)\left(\frac{\gamma_{A,2}}{t}-\frac{2\gamma_{A,2}\ln 2}{ts} \right)x(s)d\mu_s,\,
  (Bx)(t)=\int\limits_{1}^{2} I_{[\alpha,\infty[}(t)\frac{\gamma_{B,2}}{t} x(s)d\mu_s$},
  \end{equation*}
for almost every $t$, where $\gamma_{A,2}, \gamma_{B,2}\in\mathbb{R}$,
 $0<\alpha\le 1$.
 Let $\{A_n\}: L_p(\mathbb{R},\mu)\to L_p(\mathbb{R},\mu)$,  $1< p \leq \infty$ be a sequence of operators
  \begin{equation*}
   (A_n x)(t)=\int\limits_{1}^{2} I_{[\alpha,\infty[}(t)\left(\frac{\gamma_{n}}{t}-\frac{2\gamma_{n}\ln 2}{ts} \right)x(s)d\mu_s,
  \end{equation*}
 for almost every $t$, $\{\gamma_n\}$ is a number sequence.
    Let
         \begin{equation*}
          \Lambda=\left\{(\gamma_{A,2}, \gamma_{B,2}, \alpha)\in\mathbb{R}^{3}:\, 0<\alpha\leq 1 \right\}.
         \end{equation*}
  Then
  \begin{enumerate}[label=\textup{\arabic*.}, ref=\arabic*]
           \item\label{PropositionExampleLaurentPolyTheoremIntOpRepGenKernLpConvSeqComOp:item1}  $AB=\delta BA^2=0$, $\delta\in\mathbb{R}$, $A_nB=\delta BA_n^2$, $A_nB_n=\delta B_nA_n^2 $, $AB_n=\delta B_nA^2 $ for each positive integer $n$. Moreover, for all $x\in L_p(\mathbb{R},\mu)$, $1< p\le \infty$ we have
       \begin{equation*}
         (AB-BA)x(t)=-(BAx)(t)=-\gamma_{A,2}\gamma_{B,2}\ln2 \int\limits_{1}^{2} I_{[\alpha,\infty[}(t)\frac{1}{t}\left(1-\frac{2\ln 2}{s}\right)x(s)d\mu_s,
       \end{equation*}
       for almost every $t$.
           \item\label{PropositionExampleLaurentPolyTheoremIntOpRepGenKernLpConvSeqComOp:item2} if $\gamma_n\to 0$ when $n\to \infty$, then $A_n\to 0$ (converge in norm)
         and so  $A_nB-BA_n\to 0$ (converge in norm).

          \item\label{PropositionExampleLaurentPolyTheoremIntOpRepGenKernLpConvSeqComOp:item3} if $1<p<\infty$ and
          $\gamma_n \gamma_{B,2}\alpha^{\frac{1-p}{p}}\to 0$ or,  if $p=\infty$ and
          $\gamma_{A,2}\gamma_{B,2}\frac{1}{\alpha}\to 0$ when
            $(\gamma_{A,2},\gamma_{B,2}, \alpha)\to \lambda_0\in cl(\Lambda)$ (the closure of $\Lambda$), where $(\gamma_{A,2},\gamma_{B,2}, \alpha)\in\Lambda$, then for all $x\in L_p(\mathbb{R},\mu)$, $1< p\leq \infty$, it holds that $\|(AB-BA)x\|_{L_p}\to 0$. 
         \end{enumerate}
       \end{theorem}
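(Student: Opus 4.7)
The plan is to view the operators as a concrete instance of Proposition~\ref{PropSimilarthmBothIntOpGensepKernellsCommutativityCommutator} with $l_A=2$, $l_B=1$ and common integration set $G=[1,2]$, using the separable decomposition $a_1(t)=a_2(t)=I_{[\alpha,\infty[}(t)/t$, $c_1(s)=\gamma_{A,2}$, $c_2(s)=-2\gamma_{A,2}\ln 2/s$ for $A$, and $b_1(t)=I_{[\alpha,\infty[}(t)/t$, $e_1(s)=\gamma_{B,2}$ for $B$. For item~\ref{PropositionExampleLaurentPolyTheoremIntOpRepGenKernLpConvSeqComOp:item1}, I would then apply Proposition~\ref{PropSimilarthmBothIntOpGensepKernellsCommutativityCommutator}. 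Since $\alpha\le 1$, all $Q$-pairings on $[1,2]$ reduce to the two elementary integrals $\int_1^2 s^{-1}\,ds=\ln 2$ and $\int_1^2 s^{-2}\,ds=1/2$. The identity $AB=0$ drops out of the single cancellation $Q_{[1,2]}(b_1,c_1)a_1(t)+Q_{[1,2]}(b_1,c_2)a_2(t)=(\gamma_{A,2}\gamma_{B,2}\ln 2-\gamma_{A,2}\gamma_{B,2}\ln 2)/t=0$, and the $BA$ formula yields the stated expression for $(AB-BA)x=-BAx$. To establish $BA^2=0$, I would compute the four iterated pairings $\gamma_{i_1,i_2}=Q_{[1,2]}(a_{i_2},c_{i_1})$ and verify that $\int_1^2 e_1(s)(A^2x)(s)\,d\mu_s$ becomes a linear combination of the functionals $\int_1^2 c_j(s)x(s)\,d\mu_s$, $j=1,2$, whose two coefficients each vanish identically by analogous cancellations; this gives $BA^2x(t)=b_1(t)\cdot 0=0$. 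Substituting $\gamma_n$ for $\gamma_{A,2}$ yields the corresponding identities for the sequences.

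For item~\ref{PropositionExampleLaurentPolyTheoremIntOpRepGenKernLpConvSeqComOp:item2}, I would use the rank-one factorisation
\[
(A_n x)(t)=\gamma_n\,\frac{I_{[\alpha,\infty[}(t)}{t}\int_1^2\Big(1-\frac{2\ln 2}{s}\Big)x(s)\,d\mu_s,
\]
and apply H\"older's inequality. The resulting bound involves $\|I_{[\alpha,\infty[}(\cdot)/\cdot\|_{L_p(\mathbb{R})}=\alpha^{(1-p)/p}(p-1)^{-1/p}$ for $1<p<\infty$ and $1/\alpha$ for $p=\infty$, together with the finite constant $\|1-2\ln 2/\cdot\|_{L_q([1,2])}$; note that $p=1$ is excluded precisely because $1/t\notin L_1([\alpha,\infty[)$. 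This gives $\|A_n\|_{L_p}\le C(p,\alpha)|\gamma_n|\to 0$, and combined with $A_nB=0$ from item~1,
\[
\|A_nB-BA_n\|_{L_p}=\|BA_n\|_{L_p}\le\|B\|_{L_p}\|A_n\|_{L_p}\to 0.
\]

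For item~\ref{PropositionExampleLaurentPolyTheoremIntOpRepGenKernLpConvSeqComOp:item3}, the very same rank-one factorisation applies to the commutator $AB-BA$ itself, now with the overall coefficient $\gamma_{A,2}\gamma_{B,2}\ln 2$ in place of $\gamma_n$. Repeating the H\"older estimate yields $\|(AB-BA)x\|_{L_p}\le C|\gamma_{A,2}\gamma_{B,2}|\alpha^{(1-p)/p}\|x\|_{L_p}$ for $1<p<\infty$ and $\le C|\gamma_{A,2}\gamma_{B,2}|\alpha^{-1}\|x\|_{L_\infty}$ for $p=\infty$, which tend to zero under the stated hypotheses. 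The main obstacle is the double-cancellation behind $BA^2=0$: one must track the four pairings $\gamma_{i_1,i_2}$ correctly and check both coefficient functionals simultaneously; once this is done, every other step is routine and directly parallels the norm estimates of Theorems~\ref{Proposition0ExampleTheoremIntOpRepGenKernLpConvSeqComOp}--\ref{Proposition7ExampleTheoremIntOpRepGenKernLpConvSeqComOp}.
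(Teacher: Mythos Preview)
Your proposal is correct and follows essentially the same approach as the paper. For item~\ref{PropositionExampleLaurentPolyTheoremIntOpRepGenKernLpConvSeqComOp:item1} the paper simply writes ``it follows by direct computation'', and your route via Proposition~\ref{PropSimilarthmBothIntOpGensepKernellsCommutativityCommutator} is exactly that computation made explicit; in fact the cancellation you set up shows more, namely $A^2=0$ (since $a_1=a_2$ forces $\sum_{i_1}\gamma_{i_1,i_2}=0$ for each $i_2$), which makes $BA^2=0$ immediate and removes the ``double-cancellation'' obstacle you anticipate. For item~\ref{PropositionExampleLaurentPolyTheoremIntOpRepGenKernLpConvSeqComOp:item2} you conclude via submultiplicativity $\|A_nB-BA_n\|=\|BA_n\|\le\|B\|\,\|A_n\|$, whereas the paper writes out the kernel of $BA_n$ and bounds it directly with H\"older; both are equivalent and your item~\ref{PropositionExampleLaurentPolyTheoremIntOpRepGenKernLpConvSeqComOp:item3} argument coincides with the paper's.
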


    \begin{proof}
     \noindent\ref{PropositionExampleLaurentPolyTheoremIntOpRepGenKernLpConvSeqComOp:item1}. It follows by direct computation.

       \noindent\ref{PropositionExampleLaurentPolyTheoremIntOpRepGenKernLpConvSeqComOp:item2}. We first prove that $A_n\to 0$ when $n\to\infty$. By applying H\"older inequality, for all $x\in L_p(\mathbb{R})$ $1<p<\infty$, we have the following:
       \begin{eqnarray*}
         \|A_n x\|^p_{L_p}&=&\int\limits_\mathbb{R}\big| \int\limits_{1}^{2} I_{[\alpha,\infty[}(t)\big(\frac{\gamma_{n}}{t}-\frac{2\gamma_{n}\ln 2}{ts} \big)x(s)d\mu_s\big|^p d\mu_t\\
        &=& |\gamma_n |^p \int\limits_{\alpha}^{\infty} \frac{1}{t^p} d\mu_t \big|\int\limits_{\mathbb{R}} I_{[1,2]}(s)\left(1-\frac{2\ln 2}{s}\right)x(s)d\mu_s\big|^p\\
         &\leq & |\gamma_n |^p \frac{|\alpha|^{1-p}}{p-1} \left(\int\limits_{1}^{2} \big|1-\frac{2\ln 2}{s}\big|^q d\mu_s\right)^{\frac{p}{q}} \big(\int\limits_{\mathbb{R}} |x(s)|^p d\mu_s\big)\\
         &\leq & |\gamma_n|^p \frac{|\alpha|^{1-p}}{p-1} (1+2\ln 2)^p\|x\|^p.
       \end{eqnarray*}
        Therefore,
          \begin{equation*}
            \|A_n\|_{L_p}\le |\gamma_n | \frac{|\alpha|^{\frac{1-p}{p}}}{(p-1)^{\frac{1}{p}}} (1+2\ln 2)\to 0,
          \end{equation*}
          when $\gamma_n\to 0$.

          Similarly, by applying H\"older inequality,  for all $x\in L_\infty(\mathbb{R},\mu)$ we have the following:
       \begin{eqnarray*}
        && \|A_nx\|_{L_\infty}=\mathop{\esssup}_{t\in \mathbb{R}} \big| \gamma_n \int\limits_{1}^{2} I_{[\alpha,\infty[}(t)\frac{1}{t}\left(1-\frac{2\ln 2}{s}\right)x(s)d\mu_s\big| \\
         &&\ =\left| \frac{\gamma_n}{\alpha}\right| \big|\int\limits_{\mathbb{R}} I_{[1,2]}(s)\left(1-\frac{2\ln 2}{s}\right)x(s)d\mu_s\big|
         \leq   \left| \frac{\gamma_n}{\alpha}\right| \left(\int\limits_{1}^{2} \left|1-\frac{2\ln 2}{s}\right| d\mu_s\right) \|x\|_{L_\infty}
         \\
        &&\ \leq  \frac{|\gamma_n|}{\alpha}  (1+2\ln 2)\|x\|_{L_\infty}.
       \end{eqnarray*}
         Therefore,
          \begin{equation*}
            \|A_n\|_{L_\infty}\le  \frac{|\gamma_n|}{\alpha} (1+2\ln 2)\to 0,
          \end{equation*}
          when $\gamma_n\to 0$. We have
       \begin{equation*}
         (A_nB-BA_n)x(t)=-(BA_nx)(t)=-\gamma_{n}\gamma_{B,2}\ln2 \int\limits_{1}^{2} I_{[\alpha,\infty[}(t)\frac{1}{t}\left(1-\frac{2\ln 2}{s}\right)x(s)d\mu_s,
       \end{equation*}
       for almost every $t$. Therefore, by applying H\"older inequality, for all $x\in L_p(\mathbb{R},\mu)$ $1<p<\infty$, we have the following:
       \begin{eqnarray*}
        && \|(A_nB-BA_n)x\|^p_{L_p}=\int\limits_{\mathbb{R}} \big| \gamma_n \gamma_{B,2}\ln2 \int\limits_{1}^{2} I_{[\alpha,\infty[}(t)\frac{1}{t}\left(1-\frac{2\ln 2}{s}\right)x(s)d\mu_s\big|^p d\mu_t\\
         &&=|\gamma_n \gamma_{B,2}\ln 2|^p \int\limits_{\alpha}^{\infty} \frac{1}{t^p} d\mu_t \big|\int\limits_{\mathbb{R}} I_{[1,2]}(s)\left(1-\frac{2\ln 2}{s}\right)x(s)d\mu_s\big|^p\\
         &&\leq  |\gamma_n \gamma_{B,2}\ln 2|^p \frac{|\alpha|^{1-p}}{p-1} \left(\int\limits_{1}^{2} \left|1-\frac{2\ln 2}{s}\right|^q d\mu_s\right)^{\frac{p}{q}} \left(\int\limits_{\mathbb{R}} |x(s)|^p d\mu_s\right)\\
         &&\leq  |\gamma_n \gamma_{B,2}\ln 2|^p \frac{|\alpha|^{1-p}}{p-1} (1+2\ln 2)^p\|x\|^p.
       \end{eqnarray*}
         Therefore,
          \begin{equation*}
            \|A_nB-BA_n\|_{L_p}\le |\gamma_n \gamma_{B,2}\ln 2| \frac{|\alpha|^{\frac{1-p}{p}}}{(p-1)^{\frac{1}{p}}} (1+2\ln 2)\to 0,
          \end{equation*}
          when $\gamma_n\to 0$.
       Similarly, by applying H\"older inequality,  for all $x\in L_\infty(\mathbb{R},\mu)$ we have the following:
       \begin{eqnarray*}
         &&\|(A_nB-BA_n)x\|_{L_\infty}=\mathop{\esssup}_{t\in \mathbb{R}} \big| \gamma_n \gamma_{B,2}\ln2 \int\limits_{1}^{2} I_{[\alpha,\infty[}(t)\frac{1}{t}\left(1-\frac{2\ln 2}{s}\right)x(s)d\mu_s\big| \\
         &&=\left|\gamma_n \gamma_{B,2}\frac{\ln 2}{\alpha}\right| \big|\int\limits_{\mathbb{R}} I_{[1,2]}(s)\left(1-\frac{2\ln 2}{s}\right)x(s)d\mu_s\big|\\
         &&\leq  \left|\gamma_n \gamma_{B,2}\frac{\ln 2}{\alpha}\right| \left(\int\limits_{1}^{2} \left|1-\frac{2\ln 2}{s}\right| d\mu_s\right) \|x\|_{L_\infty}
         \leq  \left|\gamma_n \gamma_{B,2}\frac{\ln 2}{\alpha}\right|  (1+2\ln 2)\|x\|_{L_\infty}.
       \end{eqnarray*}
         Therefore,
          $
            \|A_nB-BA_n\|_{L_\infty}\le \left|\gamma_n \gamma_{B,2}\frac{\ln 2}{\alpha}\right| (1+2\ln 2)\to 0,
          $
          when $\gamma_n\to 0$.

         \noindent\ref{Proposition7ExampleTheoremIntOpRepGenKernLpConvSeqComOp:item5}. By following the same procedure of the item \ref{Proposition7ExampleTheoremIntOpRepGenKernLpConvSeqComOp:item2}, the operator $AB-BA$ has the following estimation. For all $x\in L_p(\mathbb{R},\mu)$, $1<p<\infty$ and for all $(\gamma_{A,2},\gamma_{B,2}, \alpha)\in\Lambda$,
         \begin{equation*}
           \|(AB-BA)x\|_{L_p}\leq |\gamma_{A,2} \gamma_{B,2}\ln 2| \frac{|\alpha|^{\frac{1-p}{p}}}{(p-1)^{\frac{1}{p}}} (1+2\ln 2) \cdot \|x\|_{L_p}\to 0,
         \end{equation*}
           when   $\gamma_{A,2} \gamma_{B,2} \alpha^{\frac{1-p}{p}}\to 0$ and $\tilde{\lambda}=|\beta_1-\alpha_1|$.
           If $p=\infty$,
           for all $x\in L_\infty(\mathbb{R},\mu)$ and for all $(\gamma_{A,2},\gamma_{B,2}, \alpha)\in \Lambda$ we have
         \begin{equation*}
           \|(AB-BA)x\|_{L_\infty}\leq \left|\gamma_{A,2} \gamma_{B,2}\frac{\ln 2}{\alpha}\right| (1+2\ln 2)\|x\|_{L_\infty}\to 0,
         \end{equation*}
         when $\gamma_{A,2} \gamma_{B,2}\frac{1}{\alpha}\to 0$.
       \QEDB
        \end{proof}

        \begin{remark}
        In connections to Theorems \ref{Proposition0ExampleTheoremIntOpRepGenKernLpConvSeqComOp}--\ref{PropositionExampleLaurentPolyTheoremIntOpRepGenKernLpConvSeqComOp}
        , if $\lambda_0\in \Lambda$ then the commutator $[A,B]=AB-BA$ exists for parameters at point $\lambda_0\in\Lambda$ and it is equal zero under some convergence conditions, however there are issues regarding convergence of operators $A$ and $B$. If $\lambda_0\in$ cl$(\Lambda)\setminus \Lambda$ then
        $[A,B]=AB-BA$, operators $A$, $B$ might not exist for parameters at point $\lambda_0$. However
        the commutator $[A,B]=AB-BA$ and operators $A$, $B$  exist for parameters in  $\Lambda$ and when moving
        these parameters in these set to a point $\lambda_0 \in$ cl$(\Lambda)\setminus \Lambda$, the commutator $[A,B]=AB-BA$ converges to zero under some conditions.
        \end{remark}

\section*{Acknowledgments}
This work was supported by the Swedish International Development Cooperation Agency (Sida), bilateral capacity development program in Mathematics with Mozambique. Domingos Djinja is grateful to
Mathematics and Applied Mathematics research environment MAM, Division of Mathematics and Physics, School of Education, Culture and Communication, M{\"a}lardalen University for excellent environment for research in Mathematics.


\end{document}